\newtheorem {theorem} {Theorem}[section]
\newtheorem {proposition} [theorem]{Proposition}
\newtheorem {corollary} [theorem]{Corollary}
\newtheorem {lemma}  [theorem]{Lemma}
\numberwithin{equation}{section}
\theoremstyle{definition}
\newtheorem{df}{Definition}
\newtheorem{prb}{Problem}
\theoremstyle{remark}
\newtheorem{rem}{Remark}
\newcommand{\al}{\alpha}
\newcommand{\clk}{\sigma}
\newcommand{\bclk}{\mathfrak{S}}
\newcommand{\neva}{N}
\newcommand{\nevva}{\mathcal{N}}
\newcommand{\bneva}{\mathfrak{N}}
\newcommand{\aream}{A}
\newcommand{\si}{\Sigma}
\newcommand{\plh}{PM}
\newcommand{\dcm}{DM}
\newcommand{\lad}{\lambda}
\newcommand{\ant}{a}
\newcommand{\diin}{\theta}
\newcommand{\Mfld}{\mathcal{M}}
\newcommand{\cau}{\mathcal{K}}
\newcommand{\er}{\varepsilon}
\newcommand{\za}{\zeta}
\newcommand{\mPhi}{\Sigma_{\Phi}}
\newcommand{\ph}{\varphi}
\newcommand{\cph}{C_\varphi}
\newcommand{\hol}{\mathcal{H}ol}
\newcommand{\Dbb}{\mathbb D}
\newcommand{\Tbb}{\mathbb T}
\newcommand{\Rl}{\mathrm{Re}}
\newcommand{\sptwo}{S_2}
\newcommand{\spd}{S_d}
\newcommand{\wsd}{\widehat\si_d}
\newcommand{\cd}{{\mathbb{C}}^d}
\newcommand{\ppd}{{\mathbb{P}}^{d-1}}
\newcommand{\bd}{B_d}
\newcommand{\Cbb}{\mathbb C}
\newcommand{\Nbb}{\mathbb N}
\newcommand{\kla}{I^*(H^2)}
\newcommand{\ksm}{I_*(H^2)}
\begin{document}

\title[Clark measures]{Clark measures on the complex sphere}

%\subtitle{Do you have a subtitle?\\ If so, write it here}

%\titlerunning{Short form of title}        % if too long for running head

\author{Aleksei B.\ Aleksandrov}
\address{St.~Petersburg Department
of Steklov Mathematical Institute, Fontanka 27, St.~Petersburg
191023, Russia}

\email{alex@pdmi.ras.ru}

\author{Evgueni Doubtsov}
%\authorrunning{Short form of author list} % if too long for running head

\address{St.~Petersburg Department
of Steklov Mathematical Institute, Fontanka 27, St.~Petersburg
191023, Russia}

\email{dubtsov@pdmi.ras.ru}

\thanks{This research was supported by the Russian Science Foundation (grant No.~18-11-00053).}

\subjclass[2010]{Primary 32A35; Secondary 30E20, 30J05, 31C10, 32A26, 32A40, 43A85, 46E27, 46J15, 47A45, 47B33}

\keywords{Hardy space, inner function, model space, pluriharmonic measure, representing measure,
Henkin measure, Cauchy integral,
composition operator}
%Nevanlinna counting function

\begin{abstract}
Let $B_d$ denote the unit ball of $\mathbb{C}^d$, $d\ge 1$.
Given a holomorphic function $\varphi: B_d \to B_1$, we study the corresponding family
$\sigma_\alpha[\varphi]$, $\alpha\in\partial B_1$,
of Clark measures
on the unit sphere $\partial B_d$.
If $\varphi$ is an inner function, then we introduce and investigate related unitary operators $U_\alpha$ mapping analogs of model spaces onto $L^2(\sigma_\alpha)$, $\alpha\in\partial B_1$.
In particular, we explicitly characterize the set of $U_\alpha^* f$ such that $f\sigma_\alpha$
is a pluriharmonic measure.
Also, for an arbitrary holomorphic $\varphi: B_d \to B_1$, we use the family $\sigma_\alpha[\varphi]$ to
compute the essential norm of the composition operator $C_\varphi: H^2(B_1)\to H^2(B_d)$.
\end{abstract}

\maketitle

\section{Introduction}\label{s_int}

Let $\bd$ denote the open unit ball of $\cd$, $d\ge 1$. For the unit disk $B_1$ of $\Cbb$, we also use the notation $\Dbb$.
Put $\spd = \partial\bd$ and $\Tbb=\partial\Dbb$.
In fact, our notation is close to that in monograph \cite{Ru80}.
Also, we often use without explicit references
basic results of the function theory in $\bd$
presented in \cite{Ru80}.
So, for $z,\za\in \bd\cup \spd$ with $\langle z, \za \rangle \neq 1$, the equality
\[
C(z, \za) = \frac{1}{(1-\langle z, \za \rangle)^d}
\]
defines the Cauchy kernel for $\bd$.
The invariant Poisson kernel is given by the formula
\[
P(z, \za) = \frac{C(z, \za) C(\za, z)}{C(z, z)} = \left(\frac{1-|z|^2}{|1-\langle z, \za \rangle|^2} \right)^d,
\quad z,\za\in \bd\cup \spd\ \textrm{with}\  \langle z, \za \rangle \neq 1.
\]

\subsection{Pluriharmonic measures}
Let $M(\spd)$ denote the space of complex Borel measures on the sphere $\spd$.
A measure $\mu\in M(\spd)$ is called \textsl{pluriharmonic} if the invariant Poisson integral
\[
P[\mu](z) = \int_{\spd} P(z, \za)\, d\mu(\za), \quad z\in\bd,
\]
is a pluriharmonic function.
Let $\plh(\spd)$ denote the set of all pluriharmonic measures.
For $\mu\in\plh(\spd)$, it is well-known that the invariant Poisson integral $P[\mu]$
coincides with the harmonic one.

\subsection{Clark measures}
Given an $\alpha\in\Tbb$ and a holomorphic function $\ph: \bd\to \Dbb$, the quotient
\[
\frac{1-|\ph(z)|^2}{|\al-\ph(z)|^2}= \Rl \left(\frac{\al+ \ph(z)}{\al- \ph(z)} \right), \quad z\in \bd,
\]
is positive and pluriharmonic.
Therefore, there exists a unique positive measure $\clk_\al= \clk_\al[\ph] \in M(\spd)$
such that
\[
P[\clk_\al](z) = \Rl \left(\frac{\al+ \ph(z)}{\al- \ph(z)} \right), \quad z\in \bd.
\]
Clearly, $\clk_\al\in \plh(\spd)$ by the definition of $\plh(\spd)$.

After the famous paper of Clark \cite{Cl72},
various properties and applications of the measures $\clk_\al$ on the unit circle $\Tbb$ have been obtained;
see, for example, reviews \cite{MM06, PS06, Sa07, GR15} for further references.
To the best of the authors' knowledge, the measures $\clk_\al$ on the unit sphere $\spd$, $d\ge 2$, have not been
investigated earlier.
See \cite{Ju14} for a different extension of the Clark theory motivated by the multivariable operator theory.

\subsection{Clark measures and model spaces}
Let $\si=\si_d$ denote the normalized Lebesgue measure on the sphere $\spd$.

\begin{df}\label{d_inner}
A
%non-constant
holomorphic function $I:\bd \to \Dbb$ is called \textsl{inner}
if $|I(\za)|=1$ for $\si_d$-a.e. $\za\in\spd$.
\end{df}

In the above definition,
$I(\za)$ stands, as usual, for
$\lim_{r\to 1-} I(r\za)$.
Recall that the corresponding limit is known to exist $\si_d$-a.e.
Also, by the above definition, unimodular constants are not inner functions.

The present paper is primarily motivated by the studies of Clark \cite{Cl72}
related to the measures $\clk_\al [\ph] \in M(\Tbb)$, $\al\in\Tbb$,
where $\ph$ is an inner function in $\Dbb$.

Given an inner function $I$ is $\bd$, we have
\[
P[\clk_\al](\za) =\frac{1-|I(\za)|^2}{|\al-I(\za)|^2}=0 \quad \si_d\textrm{-a.e.},
\]
thus, $\clk_\al = \clk_\al[I]$ is a singular measure. Here and in what follows, this means that
$\clk_\al$ is singular with respect to $\si_d$; in brief, $\clk_\al \bot\si_d$.

For $d\ge 1$, let $\hol(\bd)$ denote the space of holomorphic functions in $\bd$.
For $0<p<\infty$, the classical Hardy space $H^p=H^p(\bd)$ consists of those $f\in \hol(\bd)$ for which
\[
\|f\|_{H^p}^p = \sup_{0<r<1} \int_{\spd} |f(r\za)|^p\, d\si_d(\za) < \infty.
\]
As usual, we identify the Hardy space $H^p(\bd)$, $p>0$, and the space
$H^p(\spd)$ of the corresponding boundary values.

Given an inner function $\diin$ on $\Dbb$, the classical
model space $K_\diin$ is defined as $K_\diin = H^2(\Dbb)\ominus \diin H^2(\Dbb)$.
Clark \cite{Cl72} introduced and studied a family of unitary operators
$U_\al : K_\diin \to L^2(\clk_\al)$, $\al\in\Tbb$.

For an inner function $I$ in $\bd$, $d\ge 2$, consider the following natural analogs of $K_\diin$:
\begin{align*}
  \kla &= H^2 \ominus I H^2;\\
  \ksm &= \{f\in H^2:\, I\overline{f} \in H^2_0\},
\end{align*}
where $H^2_0 = \{f\in H^2: f(0)=0\}$.
Clearly, we have $\ksm \subset \kla$;
if $\diin$ is an inner function in $\Dbb$, then $\diin^* (H^2(\Dbb)) = \diin_* (H^2(\Dbb)) = K_\diin$.
In this paper, we define unitary operators
\[
U_\al: \kla\to L^2(\clk_\al), \quad \al\in\Tbb,
\]
and we obtain the following characterization:

\begin{theorem}\label{t_ksm_plh}
Let $I$ be an inner function in the unit ball $\bd$, $d\ge 2$,
and let $f\in L^2(\clk_\al)$, $\al\in\Tbb$.
Then the following properties are equivalent:
\begin{itemize}
  \item [(i)] $U_\al^* f \in \ksm$;
  \item [(ii)] $f\clk_\al \in \plh(\spd)$.
\end{itemize}
\end{theorem}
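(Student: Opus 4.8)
The plan is to recast (i) and (ii) as statements that certain measures on $\spd$ are pluriharmonic, and then to prove their equivalence by a Cauchy-integral computation. Throughout put $h=U_\al^*f\in\kla$, and write $C[\mu](z)=\int_{\spd}C(z,\za)\,d\mu(\za)$ for the Cauchy integral of a measure $\mu$. Recall the basic relation, part of the construction of $U_\al$, between $h$ and the Cauchy integral of $f\clk_\al$; in the standard normalization it reads $(1-\overline\al I)\,C[f\clk_\al]=h$ on $\bd$. Recall also the standard description of $\plh(\spd)$: along the bidegree decomposition $L^2(\si_d)=\bigoplus_{p,q\ge0}\mathcal H_{p,q}$ into spaces of spherical harmonics, a finite Borel measure $\nu$ lies in $\plh(\spd)$ exactly when $P[\nu]$ has no contribution from the ``mixed'' blocks $\mathcal H_{p,q}$ with $p\ge1$ and $q\ge1$; equivalently, $P[\nu]=C[\nu]+\overline{C[\,\overline\nu\,]}-\nu(\spd)$ throughout $\bd$. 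In particular (ii) asserts precisely that the mixed part of $f\clk_\al$ vanishes.

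The reformulation of (i) is elementary. Since $h\in\kla$, the very definition of $\kla$ gives $h\overline I\perp H^2$ in $L^2(\si_d)$, so the holomorphic component of $h\overline I$ is zero; moreover its constant term $\int_{\spd}h\overline I\,d\si_d=\langle h,I\rangle$ is zero because $1\in H^2$. Hence $h\overline I=\phi+\mu$ with $\phi\in\overline{H^2_0}$ (anti-holomorphic, vanishing at the origin) and $\mu$ purely mixed, and $I\overline h=\overline{h\overline I}=\overline\phi+\overline\mu$ belongs to $H^2_0$ if and only if $\mu=0$. Since $h\overline I\in L^2(\si_d)\subset L^1(\si_d)$, this reads
\[
U_\al^*f\in\ksm\quad\Longleftrightarrow\quad(h\overline I)\,\si_d\in\plh(\spd),
\]
both sides being equivalent to ``$\mu=0$''. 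Together with the reformulation of (ii), the theorem thus reduces to the single equivalence
\[
f\clk_\al\in\plh(\spd)\quad\Longleftrightarrow\quad(h\overline I)\,\si_d\in\plh(\spd),\qquad h=U_\al^*f .
\]

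Proving this equivalence is the analytic core. The approach I would take is to compute the mixed (non-pluriharmonic) parts of the two harmonic extensions $P[f\clk_\al]$ and $P[(h\overline I)\si_d]$ and to show that each is obtained from the other by multiplication by a function with no zeros in $\bd$ — the natural candidates are $1-\overline\al I$, $1-\al\overline I$ and $|1-\overline\al I|^2$ — so that the two parts vanish identically together. The ingredients are: the relation $(1-\overline\al I)C[f\clk_\al]=h$ above; the integral representation $P[f\clk_\al](z)=C(z,z)^{-1}\int_{\spd}C(z,\za)\,C(\za,z)\,f(\za)\,d\clk_\al(\za)$ for the harmonic extension; and the anti-holomorphic part $\overline{C[\,\overline f\clk_\al\,]}$ of $P[f\clk_\al]$, expressed through $h$ and $I$. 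I expect this last computation to be the main obstacle. It is the several-variable counterpart of the classical conjugation $C_\diin\colon K_\diin\to K_\diin$, given on $\Tbb$ by $C_\diin h=\overline\za\,\diin\,\overline h$, and the one-variable formula does not transfer verbatim, since for $d\ge2$ the sphere $\spd$ carries no unimodular coordinate function to play the role of $\overline\za$. Instead one should realise the conjugation abstractly, writing $\overline{C[\,\overline f\clk_\al\,]}=\overline{(1-\overline\al I)^{-1}\,\mathfrak{C}_\al h}$ where $\mathfrak{C}_\al h:=U_\al^*(\overline{U_\al h})$ is the conjugation of $\kla$ induced by the unitary $U_\al$, and then identify $\mathfrak{C}_\al h$ explicitly, using the unitarity of $U_\al$ together with the fact that $\clk_\al$ is carried by the set on which $I$ has non-tangential limit $\al$ (so that $h=\al\,(h\overline I)$ holds $\clk_\al$-almost everywhere, $h$ being taken with its non-tangential boundary values). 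Once the displayed equivalence is in hand, the two reformulations above yield (i)$\,\Leftrightarrow\,$(ii).
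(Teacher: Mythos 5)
Your reformulation of (i) is correct and worth keeping: since $h=U_\al^*f\in\kla$ one has $h\overline I\perp H^2(\si_d)$, so $h\overline I$ has no holomorphic component, and $I\overline h\in H^2_0$ is then indeed equivalent to the vanishing of the mixed bidegree components of $h\overline I$, i.e.\ to $(h\overline I)\si_d\in\plh(\spd)$. This correctly reduces the theorem to the single equivalence $f\clk_\al\in\plh(\spd)\Leftrightarrow(h\overline I)\si_d\in\plh(\spd)$. But that equivalence is the entire analytic content of the theorem, and you do not prove it; you describe a plan and yourself flag its central computation as ``the main obstacle.'' As written, the proposal therefore has a genuine gap rather than a complete argument.

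Moreover, the plan as sketched would not close. First, multiplication by a non-constant holomorphic function such as $1-\overline\al I$ does not act diagonally on the bidegree decomposition, so there is no reason the mixed parts of $P[f\clk_\al]$ and of $P[(h\overline I)\si_d]$ should differ by a nonvanishing multiplier; the listed ingredients (the identity $(1-\overline\al I)(f\clk_\al)_+=h$ and the integral formula for $P$) only control holomorphic moments, which are exactly the components that pluriharmonicity does \emph{not} constrain. Second, the facts you invoke to ``identify $\mathfrak{C}_\al h$'' --- that $h$ has boundary values $\clk_\al$-a.e.\ which coincide with $U_\al h$, and that $I=\al$ holds $\clk_\al$-a.e.\ --- are precisely the delicate points in the ball; the paper obtains their usable form only by slicing and importing Poltoratski's one-variable results for $I_\za$ and $\clk_\al[I_\za]$, together with a uniform $L^1$ bound $\int_{\spd}\|\mu_\za\|\,d\si_d<\infty$ that is needed even to produce a candidate measure $\nu$ with $P[\nu]=(1-\overline\al I)^{-1}F-\overline G$. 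Third, even once one knows that $f\clk_\al-\nu$ annihilates $A(\bd)$, concluding $f\clk_\al=\nu$ requires the Cole--Range theorem combined with the total singularity of $\clk_\al$ (Lemma~\ref{l_tot_sng}) and of singular pluriharmonic measures (Theorem~\ref{t_aif}); a Cauchy-transform identity alone cannot recover a measure from its holomorphic moments when $d\ge2$. None of these three ingredients appears in your proposal, and the easy direction (ii)$\Rightarrow$(i) --- which in the paper is a short boundary-value argument using the singularity of $f\clk_\al$ --- is also left implicit in your reduction. To repair the proposal you would need to supply the slicing argument (or an equivalent substitute) for (i)$\Rightarrow$(ii) and the F.\,\&\,M.~Riesz-type step via Henkin measures.
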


\subsection{Clark measures and essential norms of composition operators}
A different application of the Clark measures on the unit sphere comes from the
studies of composition operators.
Namely, each holomorphic function $\ph: \bd\to \Dbb$, $d\ge 1$, generates the composition operator
$\cph: \hol(\Dbb) \to \hol(\bd)$ by the following formula:
\[
(\cph f)(z) = f(\ph(z)), \quad z\in\bd.
\]
It is well known that $\cph$ maps $H^2(\Dbb)$
into $H^2(\bd)$, $d\ge 1$.
So, a natural problem is to characterize the compact operators $\cph: H^2(\Dbb) \to H^2(\bd)$.
A more general problem is to compute or estimate the essential norm of the composition operator under consideration.
For $d=1$, a solution to this problem in terms of the Nevanlinna counting function was given
in the seminal paper of J.~H.~Shapiro \cite{ShJoel87}.
A solution in terms of the family $\sigma_\al[\varphi]$, $\al\in\Tbb$, was later
obtained by Cima and Matheson \cite{CM97}.
Extending the theorem of Cima and Matheson to all $d\ge 1$, we prove the following result:

\begin{theorem}\label{t_cmp_H2K_clk}
Let $\ph: \bd\to \Dbb$, $d\ge 1$, be a holomorphic function.
Then the essential norm of the composition operator $\cph: H^2(\Dbb) \to H^2(\bd)$ is equal to the following quantity:
\[
\sqrt{\sup \{ \|\clk_\al^s\|: \al\in\Tbb\} },
\]
where $\clk_\al^s$ denotes the singular part of the Clark measure $\clk_\al= \clk_\al[\ph]$.
\end{theorem}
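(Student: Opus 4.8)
The plan is to prove the two bounds $\|\cph\|_e^2 \ge \|\clk_\al^s\|$ for every $\al\in\Tbb$ and $\|\cph\|_e^2 \le \sup_{\al\in\Tbb}\|\clk_\al^s\|$; throughout $\clk_\al=\clk_\al[\ph]$, $\ph^*$ is the ($\si_d$-a.e.\ defined) radial boundary function of $\ph$ on $\spd$, and $m$ is normalized Lebesgue measure on $\Tbb$. I shall use that $\cph\colon H^2(\Dbb)\to H^2(\bd)$ is bounded, that $\clk_\al^a$ has $\si_d$-density $\tfrac{1-|\ph^*|^2}{|\al-\ph^*|^2}$ (the $\si_d$-a.e.\ radial limit of $P[\clk_\al]=\tfrac{1-|\ph|^2}{|\al-\ph|^2}$), and the standard fact that $\clk_\al^s$ is carried by the set $\{\za\in\spd:|\ph^*(\za)|=1\}$, on which that density vanishes.

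For the lower bound, fix $\al$ and test on the normalized reproducing kernels $k_w(z)=(1-|w|^2)^{1/2}(1-\overline{w}z)^{-1}\in H^2(\Dbb)$, which tend weakly to $0$ as $|w|\to1$, so that $\|\cph\|_e\ge\limsup_{r\uparrow1}\|\cph k_{r\al}\|_{H^2(\bd)}$. Since $|1-r\overline\al\ph^*|=|\al-r\ph^*|$,
\[
\|\cph k_{r\al}\|^2=\int_{\spd}\frac{1-r^2}{|\al-r\ph^*|^2}\,d\si_d=\int_{\spd}\Rl\frac{\al+r\ph^*}{\al-r\ph^*}\,d\si_d-r^2\!\int_{\spd}\frac{1-|\ph^*|^2}{|\al-r\ph^*|^2}\,d\si_d .
\]
The first integral equals $\Rl\tfrac{\al+r\ph(0)}{\al-r\ph(0)}$ (the bounded positive pluriharmonic function $\Rl\tfrac{\al+r\ph}{\al-r\ph}$ is the Poisson integral of its boundary values), hence tends to $P[\clk_\al](0)=\|\clk_\al\|$; the second integrand is increasing in $r$ (an elementary check using $|\ph^*|\le1$), so monotone convergence sends the second integral to $\|\clk_\al^a\|$. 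Thus $\|\cph k_{r\al}\|^2\to\|\clk_\al\|-\|\clk_\al^a\|=\|\clk_\al^s\|$, and the lower bound follows.

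For the upper bound I would first localize. Since $\cph P_N$ is finite rank ($P_N$ the projection onto polynomials of degree $<N$), $\|\cph\|_e\le\|\cph(I-P_N)\|$; writing any $f$ orthogonal to such polynomials as $z^Nh$ with $\|h\|\le\|f\|$ and splitting $\spd$ along $\{|\ph^*|\le\rho\}$ and $\{|\ph^*|>\rho\}$ gives $\|\cph(I-P_N)\|^2\le\rho^{2N}\|\cph\|^2+\|R_\rho\|^2$, where $\|R_\rho\|^2:=\sup_{\|h\|\le1}\int_{\{|\ph^*|>\rho\}}|h\circ\ph^*|^2\,d\si_d$; letting $N\to\infty$ yields $\|\cph\|_e^2\le\inf_{0<\rho<1}\|R_\rho\|^2$. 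The key step is to dominate $\|R_\rho\|$ by Clark data. For $\|h\|\le1$, the least pluriharmonic majorant of $|h|^2$ on $\Dbb$ is the Poisson extension $\widehat{|h^*|^2}$, so $|h\circ\ph|^2\le\widehat{|h^*|^2}\circ\ph$ on $\bd$; integrating the defining identity $P[\clk_\al](z)=\tfrac{1-|\ph(z)|^2}{|\al-\ph(z)|^2}$ against $|h^*(\al)|^2\,dm(\al)$ identifies $\widehat{|h^*|^2}\circ\ph$ with $P[\mu_h]$, where $\mu_h:=\int_{\Tbb}|h^*(\al)|^2\,\clk_\al\,dm(\al)$ is a finite positive measure on $\spd$. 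As $h\circ\ph\in H^2(\bd)$, comparing least pluriharmonic majorants gives $|(h\circ\ph)^*|^2\si_d\le\mu_h$, so
\[
\int_{\{|\ph^*|>\rho\}}|h\circ\ph^*|^2\,d\si_d\le\mu_h\bigl(\{|\ph^*|>\rho\}\bigr)=\int_{\Tbb}|h^*(\al)|^2\,\clk_\al\bigl(\{|\ph^*|>\rho\}\bigr)\,dm(\al)\le\sup_{\al\in\Tbb}\clk_\al\bigl(\{|\ph^*|>\rho\}\bigr),
\]
whence $\|R_\rho\|^2\le\sup_{\al}\clk_\al(\{|\ph^*|>\rho\})$.

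It remains to show $\inf_{0<\rho<1}\sup_{\al\in\Tbb}\clk_\al(\{|\ph^*|>\rho\})\le\sup_{\al\in\Tbb}\|\clk_\al^s\|$, and this — rather than any of the preceding steps — is the main obstacle. Because $\clk_\al^s$ is carried by $\{|\ph^*|=1\}$ and the density of $\clk_\al^a$ vanishes there, one has $g_\rho(\al):=\clk_\al(\{|\ph^*|>\rho\})=\|\clk_\al\|-\int_{\{|\ph^*|\le\rho\}}\tfrac{1-|\ph^*|^2}{|\al-\ph^*|^2}\,d\si_d$; both terms on the right are \emph{continuous} in $\al$ (the integrand is bounded on $\{|\ph^*|\le\rho\}$ uniformly in $\al$, and $\|\clk_\al\|=\tfrac{1-|\ph(0)|^2}{|\al-\ph(0)|^2}$), and $g_\rho\downarrow\|\clk_\cdot^s\|$ pointwise as $\rho\uparrow1$. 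The limiting function $\al\mapsto\|\clk_\al^s\|$ is merely upper semicontinuous in general, so Dini's theorem is unavailable; instead one picks $\rho_n\uparrow1$ and $\al_n$ with $g_{\rho_n}(\al_n)=\max_\al g_{\rho_n}$, passes to a subsequence $\al_n\to\al_0$, and uses — for each fixed $m$ — the continuity of $g_{\rho_m}$ together with $g_{\rho_m}\ge g_{\rho_n}$ ($n\ge m$) to get $g_{\rho_m}(\al_0)\ge\lim_n\max_\al g_{\rho_n}$; sending $m\to\infty$ gives $\|\clk_{\al_0}^s\|\ge\lim_n\max_\al g_{\rho_n}$, which is exactly the required inequality. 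Nothing above is special to $d=1$, so this proves the statement for all $d\ge1$ at once, recovering the theorem of Cima and Matheson when $d=1$.
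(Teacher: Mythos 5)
Your proposal is correct, and its lower bound is exactly the paper's: testing $\cph$ on the normalized reproducing kernels $k_{r\al}$ and computing $\lim_{r\to 1-}\|\cph k_{r\al}\|^2=\|\clk_\al\|-\|\clk_\al^a\|=\|\clk_\al^s\|$ is precisely Lemma~\ref{l_ShJnn} together with part~(i) of the proof of Theorem~\ref{t_cph_tri}. Your upper bound, however, takes a genuinely different route. The paper sandwiches the essential norm through the integrated Nevanlinna counting function: Stanton's formula and the Littlewood--Paley identity give $\|\cph\|_{H^2,e}^2\le \bneva(\ph)$ (Lemma~\ref{l_ShJoel}), and the Nieminen--Saksman majorant $\nevva_\phi$ of $\neva_\phi$, applied slicewise, gives $\bneva(\ph)\le\bclk(\ph)$ via \eqref{e_sclk_norm}; this closes the circle $\bneva\ge\|\cph\|_e^2\ge\bclk\ge\bneva$ and yields the extra identity $\|\cph\|_{H^2,e}^2=\bneva(\ph)$ (hence Corollary~\ref{c_cph_tri}), which your argument does not produce. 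You instead bound the pull-back measure directly: after the standard localization $\|\cph\|_e\le\|\cph(I-P_N)\|$ and the splitting along $\{|\ph^*|\le\rho\}$, you dominate $|(h\circ\ph)^*|^2\si_d$ by the weighted disintegration $\mu_h=\int_{\Tbb}|h^*(\al)|^2\clk_\al\,dm(\al)$ (a weighted version of Theorem~\ref{t_desint}), and then handle the $\inf_\rho\sup_\al$ interchange by a compactness argument on $\Tbb$ exploiting the continuity in $\al$ of $\al\mapsto\|\clk_\al\|-\int_{\{|\ph^*|\le\rho\}}\frac{1-|\ph^*|^2}{|\al-\ph^*|^2}\,d\si_d$ and its monotone decrease to $\|\clk_\al^s\|$; this correctly circumvents the failure of Dini's theorem caused by the mere upper semicontinuity of $\al\mapsto\|\clk_\al^s\|$. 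Your route is more self-contained (no Jensen's formula, no counting functions, closer in spirit to Cima--Matheson plus the Aleksandrov disintegration), at the cost of losing the Shapiro-type formula in terms of $\bneva(\ph)$. Two small points you should make explicit: fix a Borel representative of $\ph^*$ (e.g.\ with $|\ph^*|=1$ on the exceptional null set) so that $\clk_\al(\{|\ph^*|>\rho\})$ is unambiguous and $\clk_\al^s$ is carried by $\{|\ph^*|=1\}$ -- only the inequality $\clk_\al(\{|\ph^*|>\rho\})\le\|\clk_\al\|-\int_{\{|\ph^*|\le\rho\}}\frac{1-|\ph^*|^2}{|\al-\ph^*|^2}\,d\si_d$ is actually needed; and the passage from the weak (i.e.\ $C(\spd)$-tested) definition of $\mu_h$ to $\mu_h(E)=\int_{\Tbb}|h^*(\al)|^2\clk_\al(E)\,dm(\al)$ for Borel $E$ requires the bounded-pointwise-limit argument of Remark~\ref{r_DM}, using $\sup_{\al}\|\clk_\al\|\le(1+|\ph(0)|)/(1-|\ph(0)|)<\infty$. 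Neither is a gap.
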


\subsection*{Organization of the paper}
Properties of general pluriharmonic measures and Clark measures on the unit sphere
are studied in Sections~\ref{s_aux_plh} and \ref{s_aux_clk}, respectively.
On the one hand, we show that any pluriharmonic
measure is the integral, in an appropriate weak sense, of its canonical slice measures;
see Proposition~\ref{p_slices} and Theorem~\ref{t_DM}.
On the other hand, the normalized Lebesgue measure on the unit sphere $\spd$
is the integral, in a weak sense and with respect to $\al\in\Tbb$, of the measures $\clk_\al$;
see Theorem~\ref{t_desint}.
Also, Cauchy integrals of the Clark measures on the unit sphere are studied in Section~\ref{s_aux_clk}.
Clark measures generated by inner functions are considered in Section~\ref{s_clk_inner}.
On this way, we also give the negative answer to
a question asked by the first author \cite[Problem~1]{Aab86}; see Theorem~\ref{t_wlim_M0}.
Applications are collected in Sections~\ref{s_model} and \ref{s_clk_cmp}.
So, Theorem~\ref{t_ksm_plh} and other results related to the unitary operators $U_\al: \kla\to L^2(\clk_\al)$, $\al\in\Tbb$,
are obtained in Section~\ref{s_model}.
Relations between properties of composition operators and Clark measures on the unit sphere
are studied in the final Section~\ref{s_clk_cmp}; in particular, we prove Theorem~\ref{t_cmp_H2K_clk}.

The main results of Section~\ref{s_model} were announced in \cite{ADcras}.

\section{Pluriharmonic measures}\label{s_aux_plh}

\subsection{A decomposition theorem for pluriharmonic measures}
Let $\ppd$ denote
the complex projective space of dimension $d-1$,
that is, the collection of all one-dimensional
linear subspaces of $\cd$.
Let $\wsd$ denote the unique probability measure on $\ppd$
 invariant with respect to all unitary transformations of $\cd$.
For every Borel set $E\subset\ppd$, we clearly have
 $\wsd(E)=\si_d(\pi^{-1}(E))$, where
$\pi = \pi_d$ denotes the
canonical projection from $\spd$ onto $\ppd$.

It is well-known and easy to see that
\begin{equation}\label{e_Leb_dcm}
\int_{\spd} f\, d\si_d = \int_{\ppd} \int_{\spd} f \, d\si_1^\za\, d\wsd(\za)
\quad\textrm{for all\ } f\in C(\spd),
\end{equation}
where $\si_1^\za \in M(\spd)$ is the normalized one-dimensional Lebesgue measure
supported by the unit circle $\pi^{-1}(\za) \subset \spd$.

The above property of $\si_d$ leads to the following definition.

\begin{df}\label{d_decomp_M}
A measure $\mu\in M(\spd)$ is called \textsl{decomposable}
if, for $\wsd$-a.e.\ $\za\in\ppd$, there exists a measure $\mu_\za\in M(\spd)$ such that
$\textrm{supp\,}\mu_\za \subset \pi^{-1}(\za)$,
\begin{equation}\label{e_int_norms_muz}
\int_{\ppd} \|\mu_\za \|\, d\widehat\si_d (\za) < \infty
\end{equation}
and
\begin{equation}\label{int_decom}
\mu = \int_{\ppd} \mu_\za\, d\widehat\si_d (\za)
\end{equation}
in the following weak sense:
\begin{equation}\label{e_slice_sphsph_C_proj}
\int_{\spd} f\, d\mu = \int_{\ppd} \int_{\spd} f \, d\mu_\za\, d\widehat\si_d(\za)
\end{equation}
for all $\ f\in C(\spd)$.
Let $\dcm(\spd)$ denote the set of all decomposable measures $\mu\in M(\spd)$.
\end{df}

\begin{rem}\label{r_rokh}
There are definitions similar to the above notion; see, for example, \cite{Rh52}.
However, to the best of the authors' knowledge, Definition~\ref{d_decomp_M} is a more specific one.
\end{rem}

\begin{rem}\label{r_DM}
If $\mu\in\dcm(\spd)$, then
%it is not difficult to verify that
\eqref{e_slice_sphsph_C_proj} holds for
every bounded Borel function $f$ on $\spd$.
Indeed, if $f_n$, $n=1, 2, \dots$, are Borel functions on $\spd$,
$|f_n|\le C$ and \eqref{e_slice_sphsph_C_proj} holds for
$f_n$, then \eqref{e_slice_sphsph_C_proj} holds for
$f(\xi) =\lim_{n\to \infty} f_n(\xi)$, $\xi\in\spd$,
by \eqref{e_int_norms_muz} and the dominated convergence theorem.
So,
starting with the continuous functions on $\spd$ and arguing by transfinite induction,
we conclude that \eqref{e_slice_sphsph_C_proj} holds for all bounded functions in any Baire class,
%\eqref{e_slice_sphsph_C_proj} holds for all bounded Baire functions,
hence, for all bounded Borel functions.
\end{rem}

In fact, Theorem~\ref{t_DM} below guarantees that \eqref{e_slice_sphsph_C_proj}
holds for any $f\in L^1(|\mu|)$.

Now, let $\mu$ be a pluriharmonic measure. Put $u=P[\mu]$
and $u_r(\xi) = u(r\xi)$, $0\le r <1$, $\xi\in\spd$.
For $\xi\in\spd$, the slice function $u_\xi$ is defined as $u_\xi(z) = u(z\xi)$, $z\in\Dbb$.
Since $u$ is pluriharmonic, $u_\xi$ is harmonic for all $\xi\in\spd$.
Also, by the monotone convergence theorem, we have
\begin{equation}\label{e_u_xi}
\begin{aligned}
\int_{\spd}\sup_{0<r<1}\|(u_\xi)_r\|_{L^1(\Tbb)}\, d\si_d(\xi)
&= \int_{\spd}\lim_{r\to 1-}\|(u_\xi)_r\|_{L^1(\Tbb)}\, d\si_d(\xi) \\
&= \lim_{r\to 1-} \|u_r\|_{L^1(\spd)} < \infty.
\end{aligned}
\end{equation}
Hence, for $\si_d$-a.e.\ $\xi\in\spd$, we have $\sup_{0<r<1} \|(u_\xi)_r\|_{L^1(\Tbb)} <\infty$.
Therefore, for $\si_d$-a.e.\ $\xi\in\spd$,
there exists $\mu_\xi\in M(\spd)$ such that $\textrm{supp\,}\mu_\xi \subset \Tbb\xi$ and
\[
u(z\xi) = \int_{\spd} \frac{1-|z|^2}{|w- z\xi|^2}\, d\mu_\xi(w), \quad z\in\Dbb.
\]
Observe that $\mu_\xi = \mu_{\lambda\xi}$ for $\xi\in\spd$, $\lambda\in\Tbb$.
So, the slice measure $\mu_\za$ is defined for $\wsd$-a.e. $\za\in\ppd$.
If $\mu$ is a positive pluriharmonic measure, then $\mu_\za$ is clearly defined for any $\za\in\ppd$.

Using \eqref{e_u_xi}, we conclude that
\begin{equation}\label{e_muz_norm}
\int_{\spd} \|\mu_\za\|\, d\wsd(\za) < \infty
\end{equation}
for any $\mu\in \plh (\spd)$.

The following result is formulated in \cite[Chapter~5, Section~3.1]{Aab85}.
For the reader's convenience, we supply certain details of the proof.

\begin{proposition}\label{p_slices}
  Let $\mu\in\plh(\spd)$ and let $\mu_\za$ denote the slice measure defined as above for $\wsd$-a.e.\ $\za\in\ppd$.
  Then
  \[
  \mu = \int_{\ppd} \mu_\za\, d\wsd(\za)
  \]
in the weak sense. In particular, $\plh(\spd) \subset \dcm(\spd)$.
\end{proposition}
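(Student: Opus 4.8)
The plan is to establish the weak identity \eqref{e_slice_sphsph_C_proj} for $f\in C(\spd)$ by marrying the disintegration \eqref{e_Leb_dcm} of $\si_d$ with the classical one-variable theory of Poisson integrals on $\Dbb$, working first on each complex slice through the origin and then integrating over $\ppd$. Fix $f\in C(\spd)$ and, for each $\za\in\ppd$, choose a unit vector $e_\za\in\spd$ with $\pi(e_\za)=\za$. Because $|\lambda e_\za - z e_\za|=|\lambda - z|$ for $z\in\Dbb$, $\lambda\in\Tbb$, the defining relation of the slice measure $\mu_\za$ says exactly that the harmonic slice function $z\mapsto u(z e_\za)$ is the one-dimensional Poisson integral on $\Dbb$ of the push-forward of $\mu_\za$ under $\lambda\mapsto\lambda e_\za$. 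Applying \eqref{e_Leb_dcm} to the continuous function $\xi\mapsto f(\xi)\,u(r\xi)$ gives, for $0<r<1$,
\[
\int_{\spd} f(\xi)\, u(r\xi)\, d\si_d(\xi) = \int_{\ppd}\Bigl(\int_{\Tbb} f(\lambda e_\za)\, u(r\lambda e_\za)\,\frac{|d\lambda|}{2\pi}\Bigr)\, d\wsd(\za),
\]
the inner integral being independent of the choice of $e_\za$.

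Now I would let $r\to 1-$. On the left, since $\mu$ is pluriharmonic, $u=P[\mu]$ coincides with the harmonic Poisson integral of $\mu$, so $u_r\,d\si_d\to\mu$ weak-$*$ and the left-hand side tends to $\int_{\spd} f\,d\mu$. For a fixed $\za\in\ppd$, the inner integral on the right is $\int_{\Tbb}f(\lambda e_\za)\,u(r\lambda e_\za)\,\frac{|d\lambda|}{2\pi}$, where $\lambda\mapsto u(r\lambda e_\za)$ is the $r$-dilation of the disk Poisson integral identified above; since $\lambda\mapsto f(\lambda e_\za)$ is continuous on $\Tbb$, the classical weak-$*$ convergence of dilated Poisson integrals to their boundary measure shows that this inner integral converges to $\int_{\spd} f\,d\mu_\za$.

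It remains to interchange $\lim_{r\to 1-}$ with $\int_{\ppd}(\,\cdot\,)\,d\wsd$, which is the one step needing care. Uniformly in $r$, the inner integral is dominated by
\[
\Bigl|\int_{\Tbb} f(\lambda e_\za)\, u(r\lambda e_\za)\,\frac{|d\lambda|}{2\pi}\Bigr| \le \|f\|_\infty\,\|(u_{e_\za})_r\|_{L^1(\Tbb)} \le \|f\|_\infty\,\sup_{0<s<1}\|(u_{e_\za})_s\|_{L^1(\Tbb)},
\]
and by \eqref{e_u_xi} together with \eqref{e_Leb_dcm} the majorant $\za\mapsto\sup_{0<s<1}\|(u_{e_\za})_s\|_{L^1(\Tbb)}$ is $\wsd$-integrable on $\ppd$. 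The dominated convergence theorem then gives
\[
\int_{\spd} f\, d\mu = \int_{\ppd}\Bigl(\lim_{r\to 1-}\int_{\Tbb} f(\lambda e_\za)\, u(r\lambda e_\za)\,\frac{|d\lambda|}{2\pi}\Bigr)\, d\wsd(\za) = \int_{\ppd}\int_{\spd} f\, d\mu_\za\, d\wsd(\za),
\]
which is \eqref{e_slice_sphsph_C_proj}. The inclusion $\plh(\spd)\subset\dcm(\spd)$ then follows at once from Definition~\ref{d_decomp_M}, since $\textrm{supp\,}\mu_\za\subset\pi^{-1}(\za)$ by construction, \eqref{e_int_norms_muz} holds by \eqref{e_muz_norm}, and \eqref{int_decom} has just been verified. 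The only genuine obstacle is this passage to the limit under the outer integral; the required integrable majorant over $\ppd$ is provided precisely by the monotone-convergence computation \eqref{e_u_xi}, the rest being a transplantation of standard one-variable Poisson-integral facts onto the slices.
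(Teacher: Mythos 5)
Your proof is correct and follows essentially the same route as the paper: apply \eqref{e_Leb_dcm} to $f\,u_r$, let $r\to1-$ using weak-$*$ convergence of the dilated Poisson integrals on each slice and on the sphere, and justify the interchange of limit and outer integral by dominated convergence with the integrable majorant coming from \eqref{e_u_xi} (equivalently \eqref{e_muz_norm}). You merely spell out the domination step in more detail than the paper does.
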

\begin{proof}
Let $f\in C(\spd)$.
Given a measure $\mu\in \plh(\spd)$,
put $u = P[\mu]$.
By \eqref{e_Leb_dcm},
\[
\int_{\spd} f u_r\, d\si_d = \int_{\ppd} \int_{\spd} f u_r\, d\si_1^\za\, d\wsd(\za), \quad 0<r<1.
\]
Observe that $u_r \si_d \to \mu$ and $u_r \si_1^\za\to \mu_\za$
as $r\to 1-$
in $\sigma(M(\spd), C(\spd))$-topology for $\wsd$-a.e.\ $\za\in\ppd$.
So, taking the limit as $r\to 1-$
%01
and applying \eqref{e_muz_norm}, we obtain
\[
\int_{\spd} f\, d\mu = \int_{\ppd} \int_{\spd} f \, d\mu_\za\, d\wsd(\za)
\]
by the dominated convergence theorem.
\end{proof}

\subsection{Properties of decomposable measures}
The main results of the present section are
Proposition~\ref{p_non_neg_muz_prop}, Theorems~\ref{t_mu_mod} and \ref{t_DM} and Proposition~\ref{p_DM_sgl}
below.
We start with auxiliary lemmas.

\begin{lemma}\label{l_non_neg}
Let $\mu\in\dcm(\spd)$ be a positive measure. Assume that
$\mu_\za$ is a real measure for $\wsd$-a.e. $\za\in\ppd$. Let $f\in C(\spd)$, $f\ge 0$.
Then
\[
\int_{\spd} f\, d\mu_\za\ge0\quad \textrm{for\ } \wsd\textrm{-a.e.\ } \za\in\ppd.
\]
\end{lemma}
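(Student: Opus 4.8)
The plan is a short duality argument based on the weak decomposition \eqref{e_slice_sphsph_C_proj}. First I would note that, since $f\ge 0$ is real and $\mu_\za$ is real for $\wsd$-a.e.\ $\za$, the quantity $h(\za):=\int_{\spd} f\, d\mu_\za$ is a well-defined real number for $\wsd$-a.e.\ $\za\in\ppd$; the bound $|h(\za)|\le\|f\|_{C(\spd)}\|\mu_\za\|$ together with \eqref{e_int_norms_muz} shows $h\in L^1(\wsd)$, and $h$ is $\wsd$-measurable (this measurability is implicit in Definition~\ref{d_decomp_M}, since it is already needed for the right-hand side of \eqref{e_slice_sphsph_C_proj} to make sense). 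The conclusion of the lemma is exactly that $h\ge 0$ $\wsd$-a.e., and I would prove it by contradiction, assuming that the set $E:=\{\za\in\ppd:\ h(\za)<0\}$ --- which we may take to be Borel after passing to a Borel representative of $h$ --- has $\wsd(E)>0$.

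The key step is to test $\mu$ against the function $g:=f\cdot(\mathbf 1_E\circ\pi)$ on $\spd$. Since $\pi$ is continuous, $g$ is bounded, Borel, and nonnegative, so $\int_{\spd} g\, d\mu\ge 0$ because $\mu\ge 0$. On the other hand, $\mathbf 1_E\circ\pi$ is constant on each fiber $\pi^{-1}(\za)$, with value $\mathbf 1_E(\za)$, and $\textrm{supp\,}\mu_\za\subset\pi^{-1}(\za)$; hence $\int_{\spd} g\, d\mu_\za=\mathbf 1_E(\za)h(\za)$. Applying \eqref{e_slice_sphsph_C_proj} to the bounded Borel function $g$, which is legitimate by Remark~\ref{r_DM}, yields
\[
0\le\int_{\spd} g\, d\mu=\int_{\ppd}\mathbf 1_E(\za)h(\za)\, d\wsd(\za)=\int_E h\, d\wsd .
\]

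To finish, I would pass from ``$h<0$ on $E$'' to ``$\int_E h\, d\wsd<0$'': writing $E=\bigcup_n E_n$ with $E_n:=\{\za:\ h(\za)\le -1/n\}$, the assumption $\wsd(E)>0$ forces $\wsd(E_n)>0$ for some $n$, so that $\int_E h\, d\wsd\le -\wsd(E_n)/n<0$, contradicting the displayed inequality. Hence $\wsd(E)=0$, i.e.\ $h\ge 0$ $\wsd$-a.e., which is the assertion. I do not expect a genuine obstacle: the only points that deserve attention are the $\wsd$-measurability of $\za\mapsto\int_{\spd} f\, d\mu_\za$ (needed for $E$ to be a legitimate test set and for Remark~\ref{r_DM} to apply to $g$) and the elementary strict-inequality step at the end.
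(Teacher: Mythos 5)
Your proof is correct and follows essentially the same route as the paper: assume the conclusion fails on a set of positive $\wsd$-measure, test the positive measure $\mu$ against $f$ times the indicator of the preimage of that set, and use Remark~\ref{r_DM} to pass the decomposition \eqref{e_slice_sphsph_C_proj} to this bounded Borel function, reaching a contradiction. The only (cosmetic) difference is that the paper first shrinks to a compact $K\subset E$ and approximates $\chi_{\pi^{-1}(K)}$ by continuous functions, whereas you work directly with the Borel set $E$; your explicit treatment of the measurability of $\za\mapsto\int f\,d\mu_\za$ and of the final strict-inequality step fills in details the paper leaves implicit.
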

\begin{proof}
Suppose that the exists a set $E\subset \ppd$ such that $\wsd(E) >0$ and
\[
\int_{\spd} f\, d\mu_\za < 0\quad \textrm{for all\ } \za\in E.
\]
Fix a compact set $K\subset E$ such that $\wsd(K)>0$.
Select a sequence $\{f_n\}_{n=1}^\infty$ in $C(\spd)$  such that $0\le f_n\le 1$
and
\[
\lim_{n\to\infty}f_n(\xi)= {\chi}_{\pi^{-1}(K)}(\xi) \quad \textrm{for all\ } \xi\in\spd,
\]
where ${\chi}_{\pi^{-1}(K)}$ is the characteristic function of the set ${\pi^{-1}(K)}$.
%02
Applying \eqref{e_slice_sphsph_C_proj} and Remark~\ref{r_DM}, we obtain
\[
\int_{\ppd} \int_{\spd} f {\chi}_{\pi^{-1}(K)} \, d\mu_\za\, d\widehat\si_d(\za)
= \int_{\spd} f {\chi}_{\pi^{-1}(K)} \, d\mu
= \lim_{n\to\infty} \int_{\spd} f f_n \, d\mu
\ge 0,
\]
since $\mu$ is a positive measure.
However,
\begin{align*}
  \int_{\spd} f{\chi}_{\pi^{-1}(K)} \, d\mu_\za &= \int_{\spd} f \, d\mu_\za<0 \quad\textrm{for all\ } \za\in K; \\
  \int_{\spd} f {\chi}_{\pi^{-1}(K)} \, d\mu_\za &=0 \quad\textrm{for all\ } \za\in\ppd\setminus K.
\end{align*}
Since $\wsd(K)>0$, we have a contradiction.
So, the proof of the proposition is finished.
\end{proof}

\begin{lemma}\label{non_neg_muz}
Let $\mu\in\dcm(\spd)$ be a positive measure. Assume that
$\mu_\za$ is a real measure for $\wsd$-a.e. $\za\in\ppd$.
Then $\mu_\za\ge 0$ for $\wsd$-a.e. $\za\in\ppd$.
\end{lemma}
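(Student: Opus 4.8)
The plan is to promote Lemma~\ref{l_non_neg}, which for each fixed nonnegative $f\in C(\spd)$ furnishes one exceptional $\wsd$-null set, to a single $\wsd$-null set that works simultaneously for all such $f$. Once $\mu_\za$ is known to be a finite real measure for which $\int f\,d\mu_\za\ge0$ for every nonnegative continuous $f$, the standard positivity criterion for measures yields $\mu_\za\ge0$. The one genuine point is the dependence on $f$ of the null set produced by Lemma~\ref{l_non_neg}, and I would remove it using separability of $C(\spd)$.

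Concretely, I would first fix a countable family $\{g_k\}_{k\ge1}\subset C(\spd)$ with $g_k\ge0$ that is dense, in the uniform norm, in the cone $\{f\in C(\spd):f\ge0\}$; such a family exists because $\spd$ is a compact metric space, so $C(\spd)$ is separable. Applying Lemma~\ref{l_non_neg} to each $g_k$ gives $\wsd$-null sets $N_k\subset\ppd$ with $\int_{\spd}g_k\,d\mu_\za\ge0$ for $\za\notin N_k$. By the hypothesis of the lemma there is also a $\wsd$-null set $N_0$ outside of which $\mu_\za$ is defined and real (and automatically $\|\mu_\za\|<\infty$, since $\mu_\za\in M(\spd)$). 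Set $N=N_0\cup\bigcup_{k\ge1}N_k$, which is again a $\wsd$-null set.

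Next, fixing $\za\notin N$, I would verify that $\int_{\spd}f\,d\mu_\za\ge0$ for every nonnegative $f\in C(\spd)$: given $\er>0$, choose $k$ with $\|f-g_k\|_{C(\spd)}<\er$, so that $\bigl|\int_{\spd}f\,d\mu_\za-\int_{\spd}g_k\,d\mu_\za\bigr|\le\er\|\mu_\za\|$, hence $\int_{\spd}f\,d\mu_\za\ge-\er\|\mu_\za\|$; letting $\er\to0$ gives the claim. Finally, to pass from this to $\mu_\za\ge0$ I would invoke the Hahn decomposition $\mu_\za=\mu_\za^+-\mu_\za^-$ and argue by contradiction: if $\mu_\za^-\ne0$, pick a compact $K$ inside a negative set for $\mu_\za$ with $\mu_\za(K)<0$, then an open $V\supset K$ with $|\mu_\za|(V\setminus K)<\tfrac12|\mu_\za(K)|$, and a Urysohn function $f\in C(\spd)$ with $0\le f\le1$, $f\equiv1$ on $K$ and $f\equiv0$ off $V$; then $\int_{\spd}f\,d\mu_\za\le\mu_\za(K)+|\mu_\za|(V\setminus K)<\tfrac12\mu_\za(K)<0$, contradicting the previous step. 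Hence $\mu_\za\ge0$ for every $\za\notin N$, i.e.\ for $\wsd$-a.e.\ $\za\in\ppd$. The only delicate point is the reduction to countably many test functions in the second paragraph, and even that is routine given separability of $C(\spd)$; everything afterwards is bookkeeping with total variations together with a standard Urysohn argument.
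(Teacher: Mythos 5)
Your proof is correct and follows essentially the same route as the paper: apply Lemma~\ref{l_non_neg} to a countable dense subset of the cone of nonnegative continuous functions, take the union of the exceptional null sets, and conclude positivity of $\mu_\za$ off that union. The only difference is that you spell out the density approximation and the final Urysohn/Hahn step, which the paper leaves implicit.
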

\begin{proof} Let $\Lambda$ be a countable dense subset of $\{f\in C(\spd):f\ge 0\}$.
Given an $f\in\Lambda$, Lemma~\ref{l_non_neg} provides a set $E_f\subset \ppd$ such that $\wsd(E_f)=0$
and
\[
\int_{\spd} f\mu_\za\ge 0\quad \textrm{for all\ } \za\in\ppd\setminus E_f.
\]
So,
\[
\int_{\spd} f\mu_\za\ge 0 \quad \textrm{for all\ } f\in\Lambda \textrm{\ and \ }\za\in\ppd \setminus \bigcup\limits_{f\in\Lambda}E_f.
\]
Hence,
$\mu_\za\ge0$
for all $\za\in\ppd \setminus \bigcup\limits_{f\in\Lambda}E_f$.
\end{proof}

\begin{corollary}\label{mu_nol}
Let \eqref{int_decom} hold with $\mu=0$.  Then $\mu_\za=0$ for $\wsd$-a.e. $\za\in\ppd$.
\end{corollary}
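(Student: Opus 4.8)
The plan is to test the weak identity \eqref{int_decom} (in which $\mu=0$) against functions of the special form $f(\xi)=g(\pi(\xi))\,h(\xi)$ with $g\in C(\ppd)$ and $h\in C(\spd)$; note that $f\in C(\spd)$, so \eqref{e_slice_sphsph_C_proj} applies to such $f$. By Definition~\ref{d_decomp_M} we have $\operatorname{supp}\mu_\za\subset\pi^{-1}(\za)$ for $\wsd$-a.e.\ $\za$, hence $\int_{\spd} f\,d\mu_\za = g(\za)\int_{\spd} h\,d\mu_\za$ for $\wsd$-a.e.\ $\za$, and \eqref{e_slice_sphsph_C_proj} becomes
\[
0=\int_{\spd} f\,d\mu=\int_{\ppd} g(\za)\,\Psi_h(\za)\,d\wsd(\za),
\qquad \Psi_h(\za):=\int_{\spd} h\,d\mu_\za .
\]
By \eqref{e_int_norms_muz} one has $|\Psi_h(\za)|\le\|h\|_{C(\spd)}\,\|\mu_\za\|$, so $\Psi_h\in L^1(\wsd)$; since the displayed identity holds for all $g\in C(\ppd)$, the complex measure $\Psi_h\,\wsd$ annihilates $C(\ppd)$ and therefore vanishes, i.e.\ $\Psi_h=0$ $\wsd$-a.e.

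Next I would fix a countable set $\{h_n\}_{n\ge 1}\subset C(\spd)$ dense in $C(\spd)$ and take $E=\bigcup_n E_n$, where $E_n$ is a $\wsd$-null set off which $\Psi_{h_n}$ vanishes; then $\wsd(E)=0$. For $\za\in\ppd\setminus E$ we get $\int_{\spd} h_n\,d\mu_\za=0$ for every $n$, hence $\int_{\spd} h\,d\mu_\za=0$ for every $h\in C(\spd)$ by density together with the bound $\bigl|\int_{\spd} h\,d\mu_\za\bigr|\le\|h\|_{C(\spd)}\,\|\mu_\za\|$. Thus $\mu_\za=0$ for all $\za\notin E$, which is the claim.

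Alternatively, the corollary can be deduced from Lemma~\ref{non_neg_muz}: writing $\mu_\za=\operatorname{Re}\mu_\za+i\,\operatorname{Im}\mu_\za$ and testing \eqref{e_slice_sphsph_C_proj} only against real-valued $f\in C(\spd)$ shows that $\operatorname{Re}\mu_\za$ and $\operatorname{Im}\mu_\za$ each provide a decomposition of the zero measure with real slice measures; applying Lemma~\ref{non_neg_muz} to $\mu=0$ and then to $-\mu=0$ forces $\operatorname{Re}\mu_\za=\operatorname{Im}\mu_\za=0$ $\wsd$-a.e.

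I do not expect a genuine obstacle here. The only points needing care are the a.e.\ identity $\int_{\spd} f\,d\mu_\za=g(\za)\int_{\spd} h\,d\mu_\za$, which is immediate from the support condition in Definition~\ref{d_decomp_M}, and the passage to a countable dense family of test functions, which is legitimate thanks to the uniform integrable bound furnished by \eqref{e_int_norms_muz}.
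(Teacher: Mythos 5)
Your argument is correct, and your primary route is genuinely different from the paper's. The paper disposes of the corollary in two lines: it splits each $\mu_\za$ into real and imaginary parts and then invokes Lemma~\ref{non_neg_muz} (applied to the zero measure with the families $\{\mathrm{Re}\,\mu_\za\}$ and $\{-\mathrm{Re}\,\mu_\za\}$, and likewise for the imaginary parts) --- this is precisely the alternative you sketch in your last paragraph. Your main argument instead tests \eqref{e_slice_sphsph_C_proj} against products $f(\xi)=g(\pi(\xi))h(\xi)$, uses the support condition $\mathrm{supp}\,\mu_\za\subset\pi^{-1}(\za)$ to factor out $g(\za)$, concludes that $\Psi_h(\za)=\int_{\spd}h\,d\mu_\za$ vanishes $\wsd$-a.e.\ for each fixed $h$, and then collapses the exceptional sets via a countable dense family $\{h_n\}\subset C(\spd)$ together with the uniform bound $|\int_{\spd}h\,d\mu_\za|\le\|h\|_{C(\spd)}\|\mu_\za\|$ supplied by \eqref{e_int_norms_muz}; all steps are sound (measurability of $\Psi_h$ is implicit in Definition~\ref{d_decomp_M}, and $C(\spd)$ is separable since $\spd$ is compact metric). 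Your localization device is essentially the one the paper itself uses inside the proof of Lemma~\ref{l_non_neg}, where $\chi_{\pi^{-1}(K)}$ is approximated by continuous functions. What the paper's route buys is brevity, given that Lemma~\ref{non_neg_muz} is already in place; what yours buys is a self-contained proof that bypasses the positivity machinery entirely and would work verbatim for decompositions over any continuous projection.
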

\begin{proof}
Consideration of the families $\{\Rl\,\mu_\za\}$ and $\{{\mathrm{Im}}\,\mu_\za\}$
allows to make the following additional assumption:
$\mu_\za$ is real for $\wsd$-a.e. $\za\in\ppd$.
Now, the corollary immediately follows from Lemma~\ref{non_neg_muz}.
\end{proof}

So, we have the following uniqueness property: if $\mu\in \dcm(\spd)$ and we are given decompositions
\[
\mu = \int_{\ppd} \mu_\za\, d\wsd (\za)= \int_{\ppd} \nu_\za\, d\wsd (\za)
\]
in the sense of \eqref{int_decom},
then $\mu_\za=\nu_\za$ for $\wsd$-a.e. $\za\in\ppd$.

\begin{proposition}\label{p_non_neg_muz_prop}
Let $\mu\in\dcm(\spd)$ be a positive measure.
Then $\mu_\za\ge 0$ for $\wsd$-a.e. $\za\in\ppd$.
\end{proposition}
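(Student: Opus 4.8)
The plan is to deduce Proposition~\ref{p_non_neg_muz_prop} from Lemma~\ref{non_neg_muz} by removing its extra hypothesis, i.e.\ by showing that for a \emph{positive} $\mu\in\dcm(\spd)$ the slice measures $\mu_\za$ are automatically real for $\wsd$-a.e.\ $\za\in\ppd$; once this is established, Lemma~\ref{non_neg_muz} immediately yields $\mu_\za\ge 0$ for $\wsd$-a.e.\ $\za$.

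To carry this out, I would first write $\mu_\za=\Rl\,\mu_\za+i\,\mathrm{Im}\,\mu_\za$ for $\wsd$-a.e.\ $\za$. Both $\Rl\,\mu_\za$ and $\mathrm{Im}\,\mu_\za$ are real Borel measures supported on $\pi^{-1}(\za)$, with total variation at most $\|\mu_\za\|$; in particular $\int_{\ppd}\|\mathrm{Im}\,\mu_\za\|\,d\wsd(\za)<\infty$ since $\mu\in\dcm(\spd)$, so the family $\{\mathrm{Im}\,\mu_\za\}$ satisfies \eqref{e_int_norms_muz}. Next, for an arbitrary real-valued $f\in C(\spd)$, I would apply the weak decomposition \eqref{e_slice_sphsph_C_proj} and split it into real and imaginary parts; since $\mu\ge 0$ and $f$ is real, $\int_{\spd}f\,d\mu$ is real, and therefore $\int_{\ppd}\int_{\spd}f\,d\,\mathrm{Im}\,\mu_\za\,d\wsd(\za)=0$. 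Letting $f$ range over all real-valued continuous functions, hence over all of $C(\spd)$, this says precisely that \eqref{int_decom} holds with $\mu$ replaced by the zero measure and $\mu_\za$ replaced by $\mathrm{Im}\,\mu_\za$.

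At this point Corollary~\ref{mu_nol} applies to the family $\{\mathrm{Im}\,\mu_\za\}$ and forces $\mathrm{Im}\,\mu_\za=0$ for $\wsd$-a.e.\ $\za\in\ppd$; that is, $\mu_\za$ is a real measure for $\wsd$-a.e.\ $\za$. Then Lemma~\ref{non_neg_muz}, whose remaining hypotheses (that $\mu\in\dcm(\spd)$ is positive and $\mu_\za$ is real $\wsd$-a.e.) are now met, gives $\mu_\za\ge 0$ for $\wsd$-a.e.\ $\za\in\ppd$, which completes the proof.

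I do not expect a genuine obstacle here: the argument is a short reduction, and the only point needing a little care is the bookkeeping that makes Corollary~\ref{mu_nol} applicable — namely verifying that $\{\mathrm{Im}\,\mu_\za\}$ meets the integrability requirement \eqref{e_int_norms_muz} and truly represents the zero measure in the weak sense of \eqref{e_slice_sphsph_C_proj}. Everything else is a direct appeal to Corollary~\ref{mu_nol} and Lemma~\ref{non_neg_muz}.
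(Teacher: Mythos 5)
Your proof is correct and takes essentially the same approach as the paper: the paper's own (very terse) proof likewise deduces $\mathrm{Im}\,\mu_\za=0$ for $\wsd$-a.e.\ $\za\in\ppd$ from Corollary~\ref{mu_nol} and then invokes Lemma~\ref{non_neg_muz}. You have simply written out the routine verification, left implicit in the paper, that the family $\{\mathrm{Im}\,\mu_\za\}$ satisfies \eqref{e_int_norms_muz} and decomposes the zero measure in the sense of \eqref{int_decom}.
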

\begin{proof}
Corollary~\ref{mu_nol} guarantees that ${\mathrm{Im}}\,\mu_\za=0$ for $\wsd$-a.e. $\za\in\ppd$.
So, Lemma~\ref{non_neg_muz} applies.
\end{proof}

\begin{theorem}\label{t_mu_mod} Let $\mu\in \dcm(\spd)$ and \eqref{int_decom} holds.
Then $|\mu|$ is also decomposable and
\[
|\mu| = \int_{\ppd} |\mu_\za|\, d\widehat\si_d (\za)
\]
in the weak sense.
\end{theorem}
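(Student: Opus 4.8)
The plan is to exhibit the measure
$\nu:=\int_{\ppd}|\mu_\za|\,d\wsd(\za)$
as the natural candidate for a decomposition of $|\mu|$, and then to prove $\nu=|\mu|$.
First I would observe that $\nu$ is a well-defined positive element of $M(\spd)$: indeed $\||\mu_\za|\|=\|\mu_\za\|$, so $\int_{\ppd}\||\mu_\za|\|\,d\wsd(\za)<\infty$ by \eqref{e_int_norms_muz}, and $\operatorname{supp}|\mu_\za|=\operatorname{supp}\mu_\za\subset\pi^{-1}(\za)$; hence $\nu\in\dcm(\spd)$ with slice measures $|\mu_\za|$, and the whole theorem reduces to the identity $\nu=|\mu|$. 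For $f\in C(\spd)$, applying \eqref{e_slice_sphsph_C_proj} to $\mu$ gives
$\bigl|\int_{\spd}f\,d\mu\bigr|\le\int_{\ppd}\int_{\spd}|f|\,d|\mu_\za|\,d\wsd(\za)=\int_{\spd}|f|\,d\nu$,
so the functional $f\mapsto\int f\,d\mu$ is bounded on $C(\spd)$ for the $L^1(\nu)$-norm; by density of $C(\spd)$ in $L^1(\nu)$ and duality there is $\psi\in L^\infty(\nu)$ with $\|\psi\|_{L^\infty(\nu)}\le1$ and $d\mu=\psi\,d\nu$. Since $\nu\ge0$, this already yields $|\mu|=|\psi|\,\nu\le\nu$, and it remains only to show that $|\psi|=1$ holds $\nu$-a.e.

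The key step --- and the one I expect to be the main obstacle --- is to identify the restriction of the global density $\psi$ to $\wsd$-almost every fiber with the density of the corresponding slice measure. By Remark~\ref{r_DM}, applied both to $\mu\in\dcm(\spd)$ and to $\nu\in\dcm(\spd)$, the weak identity \eqref{e_slice_sphsph_C_proj} holds for all bounded Borel test functions. Fixing a Borel representative of $\psi$ and an arbitrary bounded Borel function $b$ on $\spd$, I would write $\int_{\spd}b\,d\mu=\int_{\spd}b\psi\,d\nu$ and expand both sides by the extended \eqref{e_slice_sphsph_C_proj}, obtaining
$\int_{\ppd}\int_{\spd}b\,d\mu_\za\,d\wsd(\za)=\int_{\ppd}\int_{\spd}b\psi\,d|\mu_\za|\,d\wsd(\za)$.
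Taking $b=c\cdot\chi_{\pi^{-1}(E)}$ with $c\in C(\spd)$ and $E\subset\ppd$ Borel, and using that $\mu_\za$ and $|\mu_\za|$ are carried by $\pi^{-1}(\za)$ (so that $\chi_{\pi^{-1}(E)}$ is constant on the relevant support), one gets $\int_{E}\bigl(\int_{\spd}c\,d\mu_\za\bigr)\,d\wsd(\za)=\int_{E}\bigl(\int_{\spd}c\psi\,d|\mu_\za|\bigr)\,d\wsd(\za)$ for every Borel $E$, hence the two integrands coincide for $\wsd$-a.e.\ $\za$. Running $c$ through a countable dense subset of $C(\spd)$, and noting that both sides are bounded linear functionals of $c$ (here $|\psi|\le1$ is used), I conclude that for $\wsd$-a.e.\ $\za$ the equality $\int_{\spd}c\,d\mu_\za=\int_{\spd}c\psi\,d|\mu_\za|$ holds for all $c\in C(\spd)$, i.e.\ $\mu_\za=\psi\,|\mu_\za|$ as measures.

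Finally, comparing $\mu_\za=\psi\,|\mu_\za|$ with the polar decomposition $\mu_\za=g_\za\,|\mu_\za|$, where $|g_\za|=1$ holds $|\mu_\za|$-a.e., gives $|\psi|=1$ $|\mu_\za|$-a.e.\ for $\wsd$-a.e.\ $\za$. Therefore
$\nu\bigl(\{\,|\psi|\neq1\,\}\bigr)=\int_{\ppd}|\mu_\za|\bigl(\{\,|\psi|\neq1\,\}\bigr)\,d\wsd(\za)=0$,
so $|\psi|=1$ $\nu$-a.e., and hence $|\mu|=|\psi|\,\nu=\nu=\int_{\ppd}|\mu_\za|\,d\wsd(\za)$. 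Since this is exactly a decomposition of $|\mu|$ in the sense of \eqref{int_decom} with slice measures $|\mu_\za|$, we conclude $|\mu|\in\dcm(\spd)$ together with the asserted formula. Apart from the localization argument above, the only points requiring care are the extension of the weak decomposition to bounded Borel functions (supplied by Remark~\ref{r_DM}) and the standard countable-density passage that upgrades an ``a.e.\ $\za$, for each $c$'' statement to ``a.e.\ $\za$, for all $c$''.
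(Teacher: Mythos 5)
Your argument is correct, but it follows a genuinely different route from the paper's. The paper starts from the global polar decomposition $|\mu|=h\mu$ with $|h|=1$, applies Remark~\ref{r_DM} to the bounded Borel function $fh$ to get $|\mu|=\int_{\ppd}h\mu_\za\,d\wsd(\za)$ in the weak sense, and then invokes Proposition~\ref{p_non_neg_muz_prop} (slices of a positive decomposable measure are positive) to conclude $h\mu_\za\ge0$, hence $h\mu_\za=|\mu_\za|$, for $\wsd$-a.e.\ $\za$. You instead build the candidate $\nu=\int_{\ppd}|\mu_\za|\,d\wsd(\za)$ first, produce the global density $\psi$ of $\mu$ with respect to $\nu$ by duality, localize $\psi$ to the fibers by testing against $c\cdot\chi_{\pi^{-1}(E)}$, and compare with the fiberwise polar decompositions to get $|\psi|=1$ $\nu$-a.e. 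Your localization step is in effect an independent proof of the uniqueness of decompositions (which the paper records after Corollary~\ref{mu_nol}), and your argument bypasses Proposition~\ref{p_non_neg_muz_prop} and its supporting lemmas entirely; conversely, the paper's proof is three lines long precisely because that positivity machinery has already been set up, and it never needs to construct $\nu$ as a new object.

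The one step you should not wave through is the very first one: for $\nu$ to be well defined you need $\za\mapsto\int_{\spd}f\,d|\mu_\za|$ to be $\wsd$-measurable for $f\in C(\spd)$, and this does not follow formally from the decomposability of $\mu$ (which only gives measurability of $\za\mapsto\int_{\spd}g\,d\mu_\za$). It is true and fillable in a line --- for $f\ge0$ one has $\int_{\spd}f\,d|\mu_\za|=\sup_n\bigl|\int_{\spd}u_n\,d\mu_\za\bigr|$ over a suitable countable family $\{u_n\}\subset C(\spd)$ with $|u_n|\le f$, a countable supremum of measurable functions (the same remark already underlies the integrability hypothesis \eqref{e_int_norms_muz}) --- but it deserves an explicit sentence. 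The paper's route sidesteps this entirely, since its candidate slices $h\mu_\za$ arise from Remark~\ref{r_DM}, which supplies the needed measurability as part of the transfinite-induction argument.
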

\begin{proof}
Take a Borel function $h$ on $\spd$ such that $|h|=1$ everywhere and
$|\mu|=h\mu$. Then, for any $f\in C(\spd)$, we have
\[
\int_{\spd} fd|\mu| =\int_{\spd} fh \,d\mu=\int_{\ppd}\int_{\spd} f h\, d\mu_\za\, d\wsd (\za)
\]
by Remark~\ref{r_DM}.
Applying Proposition~\ref{p_non_neg_muz_prop} to the above decomposition of $|\mu|$,
we conclude that  $h\mu_\za\ge0$ for $\wsd$-a.e. $\za\in\ppd$.
It remains to observe that the property $h\mu_\za\ge 0$ implies $h\mu_\za=|\mu_\za|$
for $\wsd$-a.e. $\za\in\ppd$.
\end{proof}

\begin{lemma}\label{l_DM_zero}
Let $\mu\in\dcm(\spd)$ be a positive measure. Assume that
$E\subset \spd$ and $\mu(E)=0$.
Then $\mu_\za (E)=0$ for $\wsd$-a.e. $\za\in\ppd$.
\end{lemma}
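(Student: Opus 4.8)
The plan is to feed the characteristic function of $E$ into the weak decomposition \eqref{e_slice_sphsph_C_proj}. First I would note that we may assume $E$ is a Borel subset of $\spd$: if $E$ is merely $\mu$-measurable with $\mu(E)=0$, then, $\mu$ being a finite positive Borel measure on the compact metric space $\spd$, there is a Borel set $\widetilde E\supset E$ with $\mu(\widetilde E)=0$, and once the conclusion is known for $\widetilde E$ the monotonicity $\mu_\za(E)\le\mu_\za(\widetilde E)$ (legitimate because $\mu_\za\ge0$) yields it for $E$. So from now on let $E$ be Borel.

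Next I would invoke Remark~\ref{r_DM}: since $\chi_E$ is a bounded Borel function on $\spd$, the identity \eqref{e_slice_sphsph_C_proj} applies with $f=\chi_E$. In particular the map $\za\mapsto\mu_\za(E)=\int_{\spd}\chi_E\,d\mu_\za$ is $\wsd$-measurable and
\[
\int_{\ppd}\mu_\za(E)\,d\wsd(\za)=\int_{\spd}\chi_E\,d\mu=\mu(E)=0 .
\]
Finally, since $\mu$ is a positive measure, Proposition~\ref{p_non_neg_muz_prop} gives $\mu_\za\ge0$, hence $\mu_\za(E)\ge0$, for $\wsd$-a.e.\ $\za\in\ppd$. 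A nonnegative $\wsd$-measurable function with vanishing integral vanishes $\wsd$-a.e., so $\mu_\za(E)=0$ for $\wsd$-a.e.\ $\za\in\ppd$.

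The argument is essentially routine, so there is no substantive obstacle; the only two points that require care are already handled by earlier results, namely the passage from continuous test functions to the indicator $\chi_E$ (Remark~\ref{r_DM}) and the nonnegativity of the slice measures $\mu_\za$ (Proposition~\ref{p_non_neg_muz_prop}).
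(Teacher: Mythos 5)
Your proof is correct and follows essentially the same route as the paper's: pass to a Borel superset of $\mu$-measure zero, apply \eqref{e_slice_sphsph_C_proj} to its characteristic function via Remark~\ref{r_DM}, and conclude from Proposition~\ref{p_non_neg_muz_prop} that the nonnegative slice masses with vanishing integral are zero $\wsd$-a.e. The only difference is cosmetic ordering of the reduction to the Borel case.
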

\begin{proof}
Take a Borel set $E_0\subset \spd$ such that $E_0\supset E$ and $\mu(E_0)=0$.
Using Proposition~\ref{p_non_neg_muz_prop}, Remark~\ref{r_DM}
and applying \eqref{e_slice_sphsph_C_proj} with $f=\chi_{E_0}$,
we obtain $0\le \mu_\za(E)\le\mu_\za(E_0)=0$ for $\wsd$-a.e.\ $\za\in\ppd$.
\end{proof}

\begin{theorem}\label{t_DM}
Let $\mu$ be a decomposable measure on $\spd$ satisfying \eqref{int_decom}.
Then $f\mu\in \dcm(\spd)$  for any $f\in L^1(|\mu|)$; moreover,
\begin{equation}\label{e_DM}
f\mu = \int_{\ppd} f\mu_\za\, d\widehat\si_d (\za)
\end{equation}
in the weak sense.
\end{theorem}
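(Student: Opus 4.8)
The plan is to reduce to the case of a positive measure and then upgrade the weak decomposition identity~\eqref{e_slice_sphsph_C_proj}, which is available for bounded Borel functions by Remark~\ref{r_DM}, first to nonnegative Borel functions and then to all of $L^1(|\mu|)$.

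\emph{The positive case.} Suppose $\mu\ge0$; then $\mu_\za\ge0$ for $\wsd$-a.e.\ $\za\in\ppd$ by Proposition~\ref{p_non_neg_muz_prop}. For a nonnegative Borel function $g$ on $\spd$, set $g_n=\min(g,n)$, a bounded Borel function, apply Remark~\ref{r_DM} to each $g_n$, and let $n\to\infty$; invoking the monotone convergence theorem on both sides (legitimate since $\mu_\za\ge0$ and $\wsd\ge0$) gives
\[
\int_{\spd} g\, d\mu = \int_{\ppd}\int_{\spd} g\, d\mu_\za\, d\wsd(\za),
\]
both sides lying in $[0,\infty]$. Taking $g=|f|$ shows $\int_{\ppd}\|f\mu_\za\|\, d\wsd(\za)=\int_{\spd}|f|\, d\mu<\infty$, so $f\in L^1(\mu_\za)$ for $\wsd$-a.e.\ $\za$; hence each $f\mu_\za\in M(\spd)$ is well defined, is carried by $\pi^{-1}(\za)$, and the family $\{f\mu_\za\}$ satisfies \eqref{e_int_norms_muz}. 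For arbitrary complex $f\in L^1(\mu)$ and $h\in C(\spd)$, the function $hf\in L^1(\mu)$ is a complex linear combination of four nonnegative Borel functions in $L^1(\mu)$; applying the displayed identity to each of them and recombining yields $\int_{\spd} hf\, d\mu=\int_{\ppd}\int_{\spd} hf\, d\mu_\za\, d\wsd(\za)$, which is \eqref{e_slice_sphsph_C_proj} for $f\mu$ with slices $f\mu_\za$. Thus $f\mu\in\dcm(\spd)$ and \eqref{e_DM} holds in this case.

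\emph{The general case.} Write $|\mu|=h\mu$ with $h$ Borel, $|h|\equiv1$, so $\mu=\bar h|\mu|$. By Theorem~\ref{t_mu_mod}, $|\mu|\in\dcm(\spd)$ with slice measures $|\mu_\za|$, and the computation inside its proof moreover shows that $h\mu_\za=|\mu_\za|$, i.e.\ $\mu_\za=\bar h|\mu_\za|$, for $\wsd$-a.e.\ $\za$. Given $f\in L^1(|\mu|)$ we also have $f\bar h\in L^1(|\mu|)$, and the positive case applied to the pair $(|\mu|,\, f\bar h)$ gives
\[
f\mu = (f\bar h)|\mu| = \int_{\ppd}(f\bar h)|\mu_\za|\, d\wsd(\za) = \int_{\ppd} f\mu_\za\, d\wsd(\za),
\]
which is exactly \eqref{e_DM}; the support and integrability conditions of Definition~\ref{d_decomp_M} for the family $\{f\mu_\za\}$ are inherited from the positive case.

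The only genuinely delicate point is the last step: one must know that the same unimodular Borel function $h$ that polarizes $\mu$ also polarizes $\wsd$-a.e.\ slice $\mu_\za$, and this is precisely the content of the computation carried out in the proof of Theorem~\ref{t_mu_mod}. Everything else — truncation together with monotone convergence to reach nonnegative Borel functions, and splitting into real/imaginary and positive/negative parts to reach $L^1$ — is routine measure theory.
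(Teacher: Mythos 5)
Your proof is correct and follows essentially the same route as the paper's: reduce to $\mu\ge 0$ via Theorem~\ref{t_mu_mod} (you spell out the polarization $h\mu_\za=|\mu_\za|$ that the paper leaves implicit), then pass from bounded Borel functions (Remark~\ref{r_DM}) to nonnegative ones by truncation and monotone convergence, and finally to complex $f\in L^1$ by linearity. The only detail the paper makes explicit that you gloss over is the use of Lemma~\ref{l_DM_zero} to justify working with an everywhere-defined Borel representative of $f$ independently of the choice of representative; this is routine and does not affect correctness.
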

\begin{proof}
By Theorem~\ref{t_mu_mod}, we may assume, without loss of generality, that $\mu\ge 0$.
Also, we may assume that $f\ge 0$.

Next, by Lemma~\ref{l_DM_zero}, it suffices to prove \eqref{e_DM}
under additional assumption that $f$ is a Borel function defined
everywhere on $\spd$.
Now, if $f$ is bounded, then \eqref{e_DM} holds by \eqref{e_slice_sphsph_C_proj} and Remark~\ref{r_DM}.
Finally, if
%03
$f\ge 0$
is an unbounded Borel function, then
$f=\lim\limits_{n\to\infty}f_n$ everywhere for a monotonically increasing sequence
$\{f_n\}$ of bounded Borel functions
%03
$f_n\ge 0$;
hence, \eqref{e_DM} holds.
\end{proof}

\begin{corollary}\label{c_DM_ll}
Let $\mu\in \dcm(\spd)$. Assume that $\nu\ll \mu$.
Then $\nu$ is also a decomposable measure.
\end{corollary}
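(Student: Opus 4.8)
The plan is to reduce the statement directly to Theorem~\ref{t_DM}. Since $\nu\ll\mu$ by hypothesis, in particular $\nu\ll|\mu|$, and $|\mu|$ is a finite positive Borel measure on $\spd$; hence the Radon--Nikodym theorem provides a function $g\in L^1(|\mu|)$ with $\nu=g\,|\mu|$. Picking a Borel function $h$ on $\spd$ with $|h|\equiv 1$ and $\mu=h\,|\mu|$ (so that $\bar h\mu=|\mu|$), I would rewrite this as $\nu=f\mu$ with $f:=g\bar h\in L^1(|\mu|)$.

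Now $\mu\in\dcm(\spd)$ means precisely that a decomposition of the form \eqref{int_decom} holds for $\mu$, so Theorem~\ref{t_DM} applies with this choice of $f$ and immediately yields $f\mu\in\dcm(\spd)$, i.e.\ $\nu\in\dcm(\spd)$, together with the explicit decomposition $\nu=\int_{\ppd} f\mu_\za\,d\wsd(\za)$ in the weak sense of \eqref{e_slice_sphsph_C_proj}; in particular the slice measures of $\nu$ are $\nu_\za=f\mu_\za$ for $\wsd$-a.e.\ $\za\in\ppd$.

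There is essentially no obstacle here: the only point deserving a word of care is that absolute continuity of $\nu$ with respect to the complex measure $\mu$ really does produce an $L^1(|\mu|)$-density (which is classical, by passing through $|\mu|$), and once $\nu=f\mu$ with $f\in L^1(|\mu|)$ is in hand, both the summability condition \eqref{e_int_norms_muz} for the family $\{f\mu_\za\}$ and the weak identity \eqref{e_DM} are already part of the conclusion of Theorem~\ref{t_DM}, so nothing further needs to be checked.
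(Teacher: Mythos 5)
Your proof is correct and follows exactly the route the paper intends: the corollary is stated as an immediate consequence of Theorem~\ref{t_DM}, obtained by writing $\nu=f\mu$ with $f=g\bar h\in L^1(|\mu|)$ via the Radon--Nikodym theorem and the polar decomposition $\mu=h|\mu|$. Nothing is missing.
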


To work with the singular parts of decomposable measures, we need the following lemma.

\begin{lemma}\label{l_sgl}
Let $\mu$ be a decomposable measure such that \eqref{int_decom} holds.
If $\mu\bot\si_d$, then $\mu_\za \bot \si_1^\za$
for $\wsd$-a.e.\ $\za\in\ppd$.
\end{lemma}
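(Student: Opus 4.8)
The plan is to reduce to the case of a positive measure and then to combine Lemma~\ref{l_DM_zero} with the disintegration \eqref{e_Leb_dcm} of $\si_d$.

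First I would invoke Theorem~\ref{t_mu_mod}: $|\mu|$ is decomposable, $|\mu|=\int_{\ppd}|\mu_\za|\,d\wsd(\za)$ in the weak sense, and the hypothesis $\mu\bot\si_d$ is, by definition, the statement $|\mu|\bot\si_d$. Since $\mu_\za\ll|\mu_\za|$, once we know $|\mu_\za|\bot\si_1^\za$ for $\wsd$-a.e.\ $\za$, the same holds for $\mu_\za$. Thus it suffices to treat the case $\mu\ge 0$, which is precisely what lets us apply Lemma~\ref{l_DM_zero}.

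So assume $\mu\ge 0$ and fix disjoint Borel sets $A,B\subset\spd$ with $A\cup B=\spd$, $\mu(B)=0$ and $\si_d(A)=0$. On the one hand, Lemma~\ref{l_DM_zero} applied with $E=B$ gives $\mu_\za(B)=0$, i.e.\ $\mu_\za$ is carried by $A$, for $\wsd$-a.e.\ $\za\in\ppd$. On the other hand, the disintegration \eqref{e_Leb_dcm} of $\si_d$ extends from $C(\spd)$ to characteristic functions of Borel sets: the map $E\mapsto\int_{\ppd}\si_1^\za(E)\,d\wsd(\za)$ is a Borel measure on $\spd$ that agrees with $\si_d$ on $C(\spd)$, hence equals $\si_d$. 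Applying this to $A$ and using $\si_d(A)=0$ yields $\int_{\ppd}\si_1^\za(A)\,d\wsd(\za)=0$, so $\si_1^\za(A)=0$, i.e.\ $\si_1^\za$ is carried by $B$, for $\wsd$-a.e.\ $\za\in\ppd$. Intersecting these two sets of full $\wsd$-measure, for $\wsd$-a.e.\ $\za$ the measures $\mu_\za$ and $\si_1^\za$ are carried by the disjoint Borel sets $A$ and $B$ respectively, whence $\mu_\za\bot\si_1^\za$, as claimed.

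The argument is short, and the only steps requiring a word of care are the reduction to $\mu\ge 0$ via Theorem~\ref{t_mu_mod} (so that Lemma~\ref{l_DM_zero}, stated for positive measures, applies) and the standard extension of \eqref{e_Leb_dcm} from continuous functions to Borel sets; I do not expect any genuine obstacle here.
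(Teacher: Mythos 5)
Your proof is correct and takes essentially the same route as the paper: reduce to $\mu\ge 0$ via Theorem~\ref{t_mu_mod}, use Lemma~\ref{l_DM_zero} to see that $\mu_\za$ is carried by a $\si_d$-null set, and use the disintegration \eqref{e_Leb_dcm} of $\si_d$ (extended to Borel sets, as in Remark~\ref{r_DM}) to see that $\si_1^\za$ is carried by its complement. The paper phrases this with a single Borel set $E$ satisfying $\mu(E)=0$ and $\si_d(E)=1$ rather than your partition $A\cup B$, but the argument is the same.
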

\begin{proof}
We may assume, without loss of generality, that $\mu$ is a positive measure.
Select a Borel set $E\subset \spd$ such that $\mu(E)=0$ and $\si_d(E) =1$.
Then $\si_1^\za(E) =1$ for $\wsd$-a.e.\ $\za\in\ppd$.
Also, $\mu_\za \ge 0$, hence, $\mu_\za(E)=0$ for $\wsd$-a.e.\ $\za\in\ppd$.
Therefore, $\mu_\za \bot \si_1^\za$ for $\wsd$-a.e.\ $\za\in\ppd$, as required.
\end{proof}

For $\mu\in \dcm(\spd)$, let $\mu^a$ ($\mu_\za^a$) denote the absolutely continuous part
of $\mu$ ($\mu_\za$) with respect to
Lebesgue measure $\si_d$ ($\si_1^\za$).
Let $\mu^s$ ($\mu_\za^s$) denote the corresponding singular part.

\begin{proposition}\label{p_DM_sgl}
Let $\mu$ be a decomposable measure such that \eqref{int_decom} holds.
Then
\[
\mu^s = \int_{\ppd} \mu_\za^s\, d\wsd(\za)
\]
in the weak sense.
\end{proposition}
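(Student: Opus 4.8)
The plan is to reduce to the positive case, split $\mu$ along a fixed carrier of its $\si_d$-singular part, and then identify the two pieces of each slice measure with $\mu_\za^a$ and $\mu_\za^s$ by invoking the uniqueness of the Lebesgue decomposition.

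First I would use Theorem~\ref{t_mu_mod} to pass to the case $\mu\ge 0$: writing $\mu=h|\mu|$ with a unimodular Borel function $h$, the proof of Theorem~\ref{t_mu_mod} gives $h\mu_\za=|\mu_\za|$ for $\wsd$-a.e.\ $\za$, so the decomposition $\mu^s=\int_{\ppd}\mu_\za^s\,d\wsd(\za)$ for $|\mu|$ transfers to $\mu$ upon multiplying by $h$. Assuming now $\mu\ge 0$, I fix a Borel set $E\subset\spd$ with $\si_d(E)=0$ on which $\mu^s$ is concentrated, so that $\mu^s=\chi_E\mu$ and $\mu^a=\chi_{\spd\setminus E}\mu$, both being bounded Borel multiples of $\mu$. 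By Theorem~\ref{t_DM},
\[
\mu^s=\int_{\ppd}\chi_E\mu_\za\,d\wsd(\za),\qquad \mu^a=\int_{\ppd}\chi_{\spd\setminus E}\mu_\za\,d\wsd(\za)
\]
in the weak sense, with slice measures $\chi_E\mu_\za$ and $\chi_{\spd\setminus E}\mu_\za$ respectively.

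The core step is to verify that, for $\wsd$-a.e.\ $\za$, the pair $\bigl(\chi_{\spd\setminus E}\mu_\za,\ \chi_E\mu_\za\bigr)$ is precisely the Lebesgue decomposition of $\mu_\za$ with respect to $\si_1^\za$. For the singular side: since $\si_d(E)=0$, applying \eqref{e_slice_sphsph_C_proj} to the decomposable measure $\si_d$ (whose slices are the $\si_1^\za$) with $f=\chi_E$, as justified by Remark~\ref{r_DM}, yields $\int_{\ppd}\si_1^\za(E)\,d\wsd(\za)=\si_d(E)=0$, hence $\si_1^\za(E)=0$ and $\chi_E\mu_\za\bot\si_1^\za$ for $\wsd$-a.e.\ $\za$; equivalently one may apply Lemma~\ref{l_sgl} to $\mu^s\bot\si_d$. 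For the absolutely continuous side: writing $\mu^a=g\si_d$ with $0\le g\in L^1(\si_d)$ and applying Theorem~\ref{t_DM} to $\si_d$ with the function $g$, we get $\mu^a=\int_{\ppd} g\si_1^\za\,d\wsd(\za)$, so the uniqueness of the decomposition (Corollary~\ref{mu_nol}) forces $\chi_{\spd\setminus E}\mu_\za=g\,\si_1^\za\ll\si_1^\za$ for $\wsd$-a.e.\ $\za$. Combining these two facts with the uniqueness of the Lebesgue decomposition of $\mu_\za$ gives $\mu_\za^s=\chi_E\mu_\za$ for $\wsd$-a.e.\ $\za$, and substituting into the displayed formula for $\mu^s$ finishes the proof.

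The main obstacle I anticipate is exactly this identification: a priori the $\si_1^\za$-singular part of $\mu_\za$ could be concentrated on a $\za$-dependent null set not contained in the fixed $E$, so one cannot argue carrier-by-carrier. The resolution is to deduce the absolute continuity of $\chi_{\spd\setminus E}\mu_\za$ from the \emph{global} absolute continuity of $\mu^a$, again through Theorem~\ref{t_DM} and the uniqueness statement of Corollary~\ref{mu_nol}. Everything else --- the reduction to $\mu\ge 0$ and the passage from continuous to bounded Borel test functions --- is routine given Theorems~\ref{t_mu_mod} and \ref{t_DM} together with Remark~\ref{r_DM}.
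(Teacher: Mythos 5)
Your argument is correct and follows essentially the same route as the paper: decompose $\mu^a$ with absolutely continuous slices and $\mu^s$ with singular slices (via Corollary~\ref{c_DM_ll}, Lemma~\ref{l_sgl} and Theorem~\ref{t_DM} applied to $\si_d$), then combine the uniqueness of the integral decomposition from Corollary~\ref{mu_nol} with the uniqueness of the Lebesgue decomposition of each $\mu_\za$. The only difference is cosmetic: you make the identification explicit through a fixed $\si_d$-null carrier $E$ of $\mu^s$ and the density $g$ of $\mu^a$, whereas the paper states the same two slice decompositions abstractly; both proofs hinge on exactly the same uniqueness step.
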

\begin{proof}
Since $\mu^a \ll \si_d$, we have a decomposition
\[
\mu^a = \int_{\ppd} \nu_\za\, d\wsd(\za)
\]
in the weak sense and for certain measures $\nu_\za$, $\nu_\za \ll \si_1^\za$ for $\wsd$-a.e.\ $\za\in\ppd$.
Also, Corollary~\ref{c_DM_ll} guarantees that $\mu^s\in \dcm(\spd)$. Hence, by Lemma~\ref{l_sgl},
\[
\mu^s = \int_{\ppd} \rho_\za\, d\wsd(\za)
\]
for certain $\rho_\za$, $\rho_\za \bot \si_1^\za$ for $\wsd$-a.e.\ $\za\in\ppd$.
Since $\mu = \mu^a + \mu^s$ has a unique integral decomposition in the sense of \eqref{int_decom}, we conclude that $\mu^a_\za = \nu_\za$
and $\mu^s_\za = \rho_\za$ for $\wsd$-a.e.\ $\za\in\ppd$. In particular,
the required decomposition of $\mu^s$ holds.
\end{proof}

\subsection{Pluriharmonic measures and Cauchy integrals}
For $\mu\in M(\spd)$, the Cauchy transform $\mu_+$ is defined as
\[
  \mu_+(z) = \int_{\spd} C(z, \xi)\, d\mu(\xi), \quad z\in \bd.
\]
Also, put
\begin{equation}\label{e_mum}
  \mu_-(z) = \int_{\spd} (C(\xi, z) -1)\, d\mu(\xi), \quad z\in\bd.
\end{equation}
Observe that $\mu_+(z) + \mu_-(z) = P[\mu](z)$, $z\in \bd$, for all $\mu\in \plh(\spd)$.

It is known that the radial limit $\mu_+(\xi) = \lim_{r\to 1-} \mu_+(r\xi)$ exists
for $\si_d$-a.e. $\za\in\spd$.
For $y\ge 0$, let $\chi_{\{|\mu_+|>y\}}$ denote
the characteristic function of the set
\[
\{\xi\in \spd: |\mu_+ (\xi)|>y\}.
\]

For $d=1$, the following result was obtained in \cite{Po96}.

\begin{proposition}\label{p_polto}
Let $\mu\in\plh(\spd)$.
Then
\[
\pi y \chi_{\{|\mu_+|> y \}} \si_d \overset{w*}{\longrightarrow} |\mu^s|\quad \textrm{as } y\to +\infty.
\]
\end{proposition}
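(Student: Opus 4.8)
The plan is to reduce the statement to the one-dimensional case, which is precisely Proposition~\ref{p_polto} for $d=1$ (namely \cite{Po96}), by slicing. Since $\mu\in\plh(\spd)$, Proposition~\ref{p_slices} gives $\mu=\int_{\ppd}\mu_\za\,d\wsd(\za)$ with slice measures $\mu_\za$ supported on the circle $\pi^{-1}(\za)$. Fix for each $\za\in\ppd$ a point $\eta=\eta(\za)\in\pi^{-1}(\za)$; the homeomorphism $\lambda\mapsto\lambda\eta$ identifies $\Tbb$ with $\pi^{-1}(\za)$, carrying the normalized Lebesgue measure of $\Tbb$ to $\si_1^\za$ and $\mu_\za$ to a finite complex measure $\widetilde\mu_\za$ on $\Tbb$ (another choice of $\eta$ merely rotates $\widetilde\mu_\za$). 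The first point to establish is that the holomorphic part of the harmonic slice function $\lambda\mapsto P[\mu](\lambda\eta)$ is $\lambda\mapsto\mu_+(\lambda\eta)$: indeed $\mu_+(\cdot\,\eta)$ is holomorphic on $\Dbb$, $\mu_-(\cdot\,\eta)$ is conjugate holomorphic and vanishes at the origin (since $C(\xi,0)-1\equiv 0$), and $P[\mu]=\mu_++\mu_-$. On the other hand, $P[\mu](\cdot\,\eta)$ is the one-variable Poisson integral of $\widetilde\mu_\za$, so by the same type of decomposition its holomorphic part is $(\widetilde\mu_\za)_+$. By uniqueness of the splitting of a harmonic function into a holomorphic part and a conjugate-holomorphic part vanishing at $0$, we get $\mu_+(\lambda\eta)=(\widetilde\mu_\za)_+(\lambda)$ for all $\lambda\in\Dbb$. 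Using $\si_d=\int_{\ppd}\si_1^\za\,d\wsd(\za)$ to see that $\mu_+$ has radial limits $\si_1^\za$-a.e.\ for $\wsd$-a.e.\ $\za$, we conclude that for $\wsd$-a.e.\ $\za$ the boundary function $\mu_+$ restricted to $\pi^{-1}(\za)$ coincides $\si_1^\za$-a.e.\ with $(\widetilde\mu_\za)_+$.

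Now I would apply the case $d=1$ (\cite{Po96}) to each $\widetilde\mu_\za$ and translate back through the identification: for $\wsd$-a.e.\ $\za$,
\[
\pi y\,\chi_{\{|\mu_+|>y\}}\,\si_1^\za\ \overset{w*}{\longrightarrow}\ |\mu_\za^s|\qquad(y\to+\infty),
\]
where $\mu_\za^s$ is the singular part of $\mu_\za$ with respect to $\si_1^\za$, in the notation preceding Proposition~\ref{p_DM_sgl} (the identification carries absolutely continuous parts to absolutely continuous parts, hence singular parts to singular parts). I would also record the weak-type $(1,1)$ estimate for the Cauchy transform, $y\,\si_1(\{|(\widetilde\mu_\za)_+|>y\})\le C\|\widetilde\mu_\za\|=C\|\mu_\za\|$ with an absolute constant $C$, which after the identification reads $\pi y\,\si_1^\za(\{|\mu_+|>y\})\le \pi C\,\|\mu_\za\|$ for every $y>0$.

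Finally, integrate in $\za$. For $f\in C(\spd)$, the disintegration \eqref{e_Leb_dcm} gives
\[
\int_{\spd} f\cdot\pi y\,\chi_{\{|\mu_+|>y\}}\,d\si_d=\int_{\ppd}\Big(\int_{\spd} f\cdot\pi y\,\chi_{\{|\mu_+|>y\}}\,d\si_1^\za\Big)\,d\wsd(\za).
\]
As $y\to+\infty$, the inner integral tends to $\int_{\spd}f\,d|\mu_\za^s|$ for $\wsd$-a.e.\ $\za$ by the previous paragraph, while being dominated by $\pi C\|f\|_\infty\,\|\mu_\za\|$, which lies in $L^1(\wsd)$ by \eqref{e_muz_norm}. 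The dominated convergence theorem yields the limit $\int_{\ppd}\int_{\spd}f\,d|\mu_\za^s|\,d\wsd(\za)$; since $\mu^s=\int_{\ppd}\mu_\za^s\,d\wsd(\za)$ by Proposition~\ref{p_DM_sgl}, and hence $|\mu^s|=\int_{\ppd}|\mu_\za^s|\,d\wsd(\za)$ by Theorem~\ref{t_mu_mod}, this limit equals $\int_{\spd}f\,d|\mu^s|$, which is the asserted weak-$*$ convergence.

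The step I expect to be the main obstacle is the first one: checking carefully that the radial boundary values of $\mu_+$ on $\si_d$-almost every slice circle genuinely coincide with the one-variable Cauchy transform of the corresponding slice measure, together with the measurability in $\za$ of the quantities appearing in the iterated integrals above (which is implicit in the construction behind Proposition~\ref{p_slices} and \eqref{e_muz_norm}). Once this is in hand, the remainder is a routine combination of the one-dimensional theorem, the disintegration \eqref{e_Leb_dcm}, Proposition~\ref{p_DM_sgl}, and Theorem~\ref{t_mu_mod}.
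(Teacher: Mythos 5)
Your proposal is correct and follows essentially the same route as the paper: identify the boundary values of $\mu_+$ on $\wsd$-a.e.\ slice circle with the one-variable Cauchy transform of the slice measure, apply Poltoratski's theorem from \cite{Po96} on each slice, and integrate over $\ppd$ using \eqref{e_Leb_dcm}, Proposition~\ref{p_slices}, Theorem~\ref{t_mu_mod} and Proposition~\ref{p_DM_sgl}. Your explicit weak-type $(1,1)$ bound $\pi y\,\si_1^\za(\{|\mu_+|>y\})\le \pi C\|\mu_\za\|$, combined with \eqref{e_muz_norm}, supplies the domination needed to pass to the limit under the integral over $\ppd$ --- a point the paper's proof uses implicitly but does not spell out.
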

\begin{proof}
%04
Let $\za\in \ppd$.
If the slice measure $\mu_\za \in M(\spd)$ is defined, then put
\[
\cau \mu_\za (r\xi) = \int_{\spd} \frac{1}{1- r\langle\xi, w\rangle}\, d\mu_\za(w), \quad \xi\in\pi^{-1}(\za),
\ 0<r<1.
\]
Since $\mu\in\plh(\spd)$, we have
\[
\mu_+ (r\xi) = \cau\mu_\za (r\xi), \quad \xi\in\pi^{-1}(\za),\ 0<r<1.
\]
Hence, given an $f\in C(\spd)$, the following equality holds:
\[
\int_{\spd} f \chi_{\{|\mu_+|> y \}}\, d\si_d =
\int_{\ppd} \int_{\spd} f \chi_{\{|\cau\mu_\za|> y \}}\, d\si_1^\za\, d\wsd(\za),
\]
where
$\chi_{\{|\cau\mu_\za|> y \}}$ is the characteristic function of the set
\[
\{\xi\in\spd: \pi(\xi)=\za\ \textrm{and\ } |\cau\mu_\za(\xi)|> y \}.
\]
By \cite[Theorem~1]{Po96},
\[
\pi y \chi\{\xi\in\pi^{-1}(\za): |\cau\mu_\za(\xi)|> y \} \si_1^\za
\overset{w*}{\longrightarrow} |\mu_\za^s|\quad \textrm{as } y\to +\infty.
\]
Combining the above properties, Proposition~\ref{p_slices}, Theorem~\ref{t_mu_mod}
and Proposition~\ref{p_DM_sgl}, we obtain
\[
\lim_{y\to +\infty} \pi y \int_{\spd} f \chi_{\{|\mu_+|> y \}}\, d\si_d
= \int_{\ppd} \int_{\spd} f \, d|\mu^s_\za| \, d\wsd(\za)
= \int_{\spd} f \, d|\mu^s|,
\]
as required.
\end{proof}

\begin{corollary}[see {\cite[Chapter~5, Section~3.2]{Aab85}}]\label{c_cauchy_aab}
Let $\mu\in\plh(\spd)$.
Then
\[
\lim_{y\to +\infty} \pi y \si_d\{\za\in\spd: |\mu_+(\za)|> y \} \to \|\mu^s\|.
\]
\end{corollary}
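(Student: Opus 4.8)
The plan is to obtain this as an immediate consequence of Proposition~\ref{p_polto} by testing the indicated weak-* convergence against the constant function $f\equiv 1$. The key point is that the sphere $\spd$ is compact, so $1\in C(\spd)$ and is therefore a legitimate test function for $\sigma(M(\spd),C(\spd))$-convergence.

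First I would rewrite the quantity under the limit as the total mass of the positive measure appearing in Proposition~\ref{p_polto}, namely
\[
\pi y\, \si_d\{\za\in\spd:\, |\mu_+(\za)|>y\} = \int_{\spd} 1\, d\bigl(\pi y\, \chi_{\{|\mu_+|>y\}}\, \si_d\bigr).
\]
By Proposition~\ref{p_polto}, the measures $\pi y\, \chi_{\{|\mu_+|>y\}}\, \si_d$ converge weak-* to $|\mu^s|$ as $y\to+\infty$. Applying this convergence to $f=1\in C(\spd)$ gives
\[
\lim_{y\to+\infty}\pi y\, \si_d\{\za\in\spd:\, |\mu_+(\za)|>y\} = \int_{\spd} 1\, d|\mu^s| = \|\mu^s\|,
\]
which is exactly the claim (with $\|\mu^s\|$ finite since $\mu\in M(\spd)$).

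There is essentially no obstacle here; the only subtlety worth a remark is that weak-* convergence of measures does not in general force convergence of total masses on a noncompact space, but the compactness of $\spd$ makes the constant function an admissible test function, and moreover all the measures in sight are nonnegative, so no cancellation can occur. Alternatively, one could bypass Proposition~\ref{p_polto} and repeat its slice argument directly with $f\equiv 1$: writing $\mu_+(r\xi)=\cau\mu_\za(r\xi)$ on each slice, invoking the one-dimensional result of \cite{Po96} together with Proposition~\ref{p_slices}, Theorem~\ref{t_mu_mod} and Proposition~\ref{p_DM_sgl} to integrate the slice masses over $\ppd$ against $\wsd$; but citing Proposition~\ref{p_polto} is the cleanest route.
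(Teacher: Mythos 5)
Your proposal is correct and is exactly the intended argument: the paper states this as an immediate corollary of Proposition~\ref{p_polto}, obtained by testing the weak-* convergence against the constant function $1\in C(\spd)$, which is admissible precisely because $\spd$ is compact. Nothing further is needed.
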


\subsection{Pluriharmonic and representing measures}
By definition, the ball algebra $A(\bd)$ consists of those $f\in C(\overline{\bd})$ which are holomorphic in $\bd$.
For $z\in \bd$, let $M_z(\spd)$ denote the set of those probability measures $\rho\in M(\spd)$
which represent the point $z$ for $A(\bd)$, that is,
\[
\int_{\spd} f\, d\rho = f(z) \quad \textrm{for all}\ f\in A(\bd).
\]
Elements of $M_z(\spd)$ are called representing measures.

\begin{df}\label{d_totnull}
A measure $\mu\in M(\spd)$ is said to be \textsl{totally singular} if $\mu\bot \rho$
for all $\rho\in M_0(\spd)$.
A set $E\subset\spd$ is called \textsl{totally null} if $\rho(E)=0$ for all $\rho\in M_0(\spd)$.
\end{df}

It is easy to check that the notions introduced in Definition~\ref{d_totnull}
do not change if $M_0(\spd)$ is replaced by $M_z(\spd)$ for any $z\in \bd$;
see, for example, \cite[Sect.~9.1.3]{Ru80}.

We will use the following theorem in Sections~\ref{s_clk_inner} and \ref{s_model}.

\begin{theorem}[{\cite[Theorem~10]{Dzap94}}]\label{t_aif}
Let $\mu\in\plh(\spd)$. Then $\mu^s$ is totally singular.
\end{theorem}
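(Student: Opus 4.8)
The plan is to reduce the assertion to a property of a single holomorphic function on $\bd$ and then to invoke the F.\ and M.\ Riesz theorem of Henkin and Valskii for the ball algebra $A(\bd)$. First, since $\Rl\mu$ and $\Im\mu$ again lie in $\plh(\spd)$, I may assume $\mu$ is real. Then $u:=P[\mu]$ is a real pluriharmonic function on $\bd$, hence $u=\Rl F$ for some $F\in\hol(\bd)$; using the relation $P[\mu]=\mu_++\mu_-$, valid for $\mu\in\plh(\spd)$, together with $\mu_-=\overline{\mu_+}-\mu(\spd)$ for real $\mu$, one may take $F=2\mu_+-\mu(\spd)$, so $F(0)=\mu(\spd)$. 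Since $\mu$ is finite, its Cauchy transform $\mu_+$, hence $F$, lies in $H^p(\bd)$ for some $p>0$; in particular $F$ has finite radial limits $\si_d$-a.e., and $\mu^a=(\Rl F^{\ast})\,\si_d$. Now fix a Borel set $Z\subset\spd$ with $\si_d(Z)=0$ and $|\mu^s|(\spd\setminus Z)=0$. Proposition~\ref{p_slices}, Corollary~\ref{c_DM_ll}, Proposition~\ref{p_DM_sgl} and Theorem~\ref{t_mu_mod} give $|\mu^s|=\int_{\ppd}|\mu^s_\za|\,d\wsd(\za)$ with $|\mu^s_\za|\bot\si_1^\za$, and Lemma~\ref{l_DM_zero} shows that $Z$ carries $\mu^s_\za$ for $\wsd$-a.e.\ $\za$ as well; moreover, using Fatou-type results one arranges that $Z$ lies inside the set $E:=\{\za\in\spd:F\text{ has no finite radial limit at }\za\}$, along which $\Rl F(r\za)=u(r\za)$ tends to $+\infty$ or $-\infty$. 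Thus everything reduces to the statement that \emph{$E$ is totally null}.

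The proof of this statement is the analytic heart of the matter, and it is exactly where function theory specific to $\bd$, $d\ge 2$, is needed: it rests on the F.\ and M.\ Riesz theorem of Henkin and Valskii for $A(\bd)$ (equivalently, on the fact that for a function in the Smirnov class of $\bd$ the set of points lacking admissible limits is totally null). The mechanism is that total nullity of $E$ is extracted from the \emph{global} holomorphic function $F$, not fibre by fibre: given $\rho\in M_0(\spd)$, one tests against radial dilates of bounded functions manufactured from $F$ --- for instance, when $\mu\ge 0$ one may replace $F$ by $G=(F-1)/(F+1)\in H^\infty(\bd)$, $\|G\|_\infty\le 1$, for which $E\subset\{G^{\ast}=-1\}$ --- and uses the reproducing identity $\int g\,d\rho=g(0)$ for $g\in A(\bd)$ to force $\rho(E)=0$.

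The main obstacle is precisely this last step, and it genuinely cannot be circumvented by the slicing machinery alone: a positive measure obtained by integrating singular one-dimensional slice measures over $\ppd$ need not be totally singular. Indeed, $\si_1^\za$ is carried by the single fibre $\pi^{-1}(\za)$ and is singular with respect to $\si_d$, yet $\si_1^\za\in M_0(\spd)$, so it is as far as possible from being totally singular. What saves the argument is that the slice measures of a pluriharmonic $\mu$ are the slices of one globally holomorphic $F$, and turning this global holomorphy into total nullity of $E$ is precisely what the Henkin--Valskii theory provides.

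Two subsidiary points need attention along the way: that $F=2\mu_+-\mu(\spd)$ does belong to some $H^p(\bd)$, $p>0$ (this holds for the Cauchy transform of any finite measure on $\spd$), and --- if one prefers to argue with the bounded function $G$ rather than with $F\in H^p(\bd)$ directly --- that one may first reduce to the case $\mu\ge 0$, which is not merely formal, since the Poisson integral of a real pluriharmonic measure need not be bounded below.
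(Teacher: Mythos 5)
The first thing to note is that the paper does not prove this statement at all: it is imported from \cite[Theorem~10]{Dzap94}, and the only hint given about the proof is that Corollary~\ref{c_cauchy_aab} --- the asymptotics $\pi y\,\si_d\{|\mu_+|>y\}\to\|\mu^s\|$, which handles complex $\mu$ all at once --- ``plays an important role'' in it. Your proposal takes a different route (no use of the distributional asymptotics of $\mu_+$), and as written it has a genuine gap exactly at the step you yourself call the analytic heart.

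The assertion that for $F$ in $H^p(\bd)$, or in the Smirnov class, the set $E$ of points lacking finite radial (or admissible) limits is totally null is not a standard theorem; you neither prove it nor give a reference containing it, and it does not follow from the Henkin--Valskii F.~and~M.~Riesz theorem. That theorem (and the Cole--Range theorem) concerns measures annihilating $A(\bd)$ and yields only \emph{weak-star} convergence of the dilates $F_r$ in $L^\infty(\rho)$ for a representing measure $\rho$ (cf.\ \cite[Sect.~11.3.1]{Ru80}), not pointwise convergence $\rho$-a.e.; Theorem~\ref{t_wlim_M0} of this very paper shows how far the weak-star limit can be from any pointwise boundary behaviour. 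The one case your sketch does close is $\mu\ge 0$: there $G=(F-1)/(F+1)\in H^\infty(\bd)$ with $\|G\|_\infty\le 1$, the carrier of $\mu^s$ lies in $\{\za:P[\mu](r\za)\to+\infty\}\subset\{\za:\lim_{r\to 1-}G(r\za)=1\}$, and the latter set is totally null by \cite[Theorem~9.3.2]{Ru80} --- the radial-limit level-set theorem already invoked in Lemma~\ref{l_tot_sng} --- not by Henkin--Valskii. But the reduction of a real $\mu\in\plh(\spd)$ to this positive case, which you flag as ``not merely formal,'' is precisely the second missing step: the Jordan components $\mu^{\pm}$ of a real pluriharmonic measure need not be pluriharmonic, so the positive case cannot be applied to them, and one would instead have to prove, e.g., that $P[\mu](r\za)\to+\infty$ for $\mu^{s,+}$-a.e.\ $\za$ (controlling the contributions of $\mu^a$ and $\mu^{s,-}$ on the $\si_d$-null carrier of $\mu^{s,+}$); note also that points of $E$ where $F$ merely oscillates are not captured by any level set of $G$, so the relevant exceptional set must be the blow-up set of $P[\mu]$, not $E$. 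Until these two points are supplied, the argument establishes the theorem only for positive pluriharmonic measures.
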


Observe that Corollary~\ref{c_cauchy_aab} plays an important role in the proof of the above result.
See also \cite{DAIF98} for generalizations of Theorem~\ref{t_aif}.

\section{Clark measures}\label{s_aux_clk}

\subsection{A disintegration theorem for Clark measures}
Proposition~\ref{p_slices} suggests that the Clark measures on the unit sphere inherit certain properties of
the classical Clark measures on the unit circle.
As an illustration, we prove an analog of the so-called disintegration theorem for $d\ge 2$;
see Theorem~\ref{t_desint} below.

%Given a measure $\mu\in M(\spd)$, let $\mathcal{P}[\mu]$ denote its harmonic Poisson integral.
The following lemma is standard, so we omit its proof.

\begin{lemma}\label{l_MS_wlim}
Let $\{\mu_n\}_{n=1}^\infty$ be a bounded sequence
in $M(\spd)$ and let $\mu\in M(\spd)$. Then $\lim\limits_{n\to\infty}\mu_n=\mu$
in $\clk(M(\spd), C(\spd))$-topology if and only if
$\lim\limits_{n\to\infty} {P}[\mu_n](z)= {P}[\mu](z)$ for all $z\in\bd$.
\end{lemma}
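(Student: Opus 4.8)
The plan is to prove the two implications separately: the forward one is immediate from the definition of weak-$*$ convergence, and the converse rests on the injectivity of the map $\mu\mapsto P[\mu]$ on $M(\spd)$ together with a routine subsequence-extraction argument that uses the boundedness hypothesis. First I would record that for each fixed $z\in\bd$ the function $\xi\mapsto P(z,\xi)$ belongs to $C(\spd)$, since $|1-\langle z,\xi\rangle|\ge 1-|z|>0$ for all $\xi\in\spd$. Hence, if $\mu_n\to\mu$ in $\clk(M(\spd),C(\spd))$, then $P[\mu_n](z)=\int_{\spd}P(z,\xi)\,d\mu_n(\xi)\to\int_{\spd}P(z,\xi)\,d\mu(\xi)=P[\mu](z)$ for every $z\in\bd$; no boundedness is needed here.

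For the converse, assume $\sup_n\|\mu_n\|<\infty$ and $P[\mu_n](z)\to P[\mu](z)$ for all $z\in\bd$. Since $\spd$ is a compact metric space, $C(\spd)$ is separable, so closed balls of $M(\spd)=C(\spd)^*$ are weak-$*$ compact (Banach--Alaoglu) and weak-$*$ metrizable. It therefore suffices to show that every subsequence of $\{\mu_n\}$ admits a further subsequence converging weak-$*$ to $\mu$; metrizability then forces $\mu_n\to\mu$ in $\clk(M(\spd),C(\spd))$. So, given a subsequence, extract a weak-$*$ convergent sub-subsequence $\mu_{n_k}\to\nu\in M(\spd)$. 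By the forward implication already proved, $P[\mu_{n_k}](z)\to P[\nu](z)$ for all $z\in\bd$; comparing with the hypothesis $P[\mu_{n_k}](z)\to P[\mu](z)$ gives $P[\nu]=P[\mu]$ on $\bd$.

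It remains to deduce $\nu=\mu$ from $P[\nu]=P[\mu]$, that is, to invoke the injectivity of the invariant Poisson integral on $M(\spd)$. This is classical: for any $\rho\in M(\spd)$ one has $P[\rho](r\,\cdot)\,\si_d\overset{w*}{\longrightarrow}\rho$ as $r\to1-$ (see \cite{Ru80}), so $P[\rho]$ determines $\rho$ uniquely; applying this to $\rho=\nu$ and to $\rho=\mu$ yields $\nu=\mu$, which completes the argument. I expect this injectivity statement to be the only nontrivial ingredient; it is standard, so the lemma follows. (Alternatively, one could quote Proposition~\ref{p_slices} and reduce to the one-dimensional uniqueness of the Poisson integral on $\Dbb$, but citing \cite{Ru80} directly is cleaner.)
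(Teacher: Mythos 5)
Your proof is correct: the forward implication follows since $P(z,\cdot)\in C(\spd)$ for each fixed $z\in\bd$, and the converse is handled properly by weak-$*$ compactness and metrizability of norm-balls of $M(\spd)$ together with the injectivity of the invariant Poisson integral (i.e.\ $P[\rho](r\,\cdot)\,\si_d\overset{w*}{\longrightarrow}\rho$ as $r\to1-$). The paper omits the proof as standard, and your argument is exactly the standard one it has in mind.
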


\begin{corollary}\label{c_slices_wcont}
Let $\ph: \bd\to \Dbb$ be a holomorphic function. The
mapping $\al\mapsto\clk_\al[\ph]$
is continuous from $\Tbb$ into the space $M(\spd)$ endowed with the weak topology.
\end{corollary}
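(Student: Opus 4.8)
The plan is to deduce Corollary~\ref{c_slices_wcont} directly from Lemma~\ref{l_MS_wlim}. Since the measures $\clk_\al[\ph]$ are all probability measures (as $P[\clk_\al](0) = \Rl\frac{\al + \ph(0)}{\al - \ph(0)}$, and more to the point their total masses $\clk_\al(\spd) = P[\clk_\al](0)$ stay in a bounded range as $\al$ ranges over $\Tbb$, with $\ph(0)$ fixed in $\Dbb$), any sequence $\{\clk_{\al_n}\}$ with $\al_n \to \al$ in $\Tbb$ is a bounded sequence in $M(\spd)$. So by Lemma~\ref{l_MS_wlim} it suffices to check the pointwise convergence of the Poisson integrals, namely
\[
\lim_{n\to\infty} P[\clk_{\al_n}](z) = P[\clk_\al](z) \quad \textrm{for every } z\in\bd.
\]

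The key step is then the observation that, by the very definition of the Clark measure,
\[
P[\clk_\beta](z) = \Rl\left(\frac{\beta + \ph(z)}{\beta - \ph(z)}\right), \quad z\in\bd,\ \beta\in\Tbb,
\]
and for each fixed $z\in\bd$ the right-hand side is a continuous function of $\beta\in\Tbb$: indeed $|\ph(z)| < 1$, so the denominator $\beta - \ph(z)$ is bounded away from $0$ uniformly in $\beta\in\Tbb$, and hence $\beta\mapsto \Rl\bigl(\frac{\beta+\ph(z)}{\beta-\ph(z)}\bigr)$ is a continuous (in fact real-analytic) function on the compact set $\Tbb$. Therefore $\al_n\to\al$ forces $P[\clk_{\al_n}](z)\to P[\clk_\al](z)$ for each $z\in\bd$, which is exactly the hypothesis of Lemma~\ref{l_MS_wlim}. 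Combining the two, $\clk_{\al_n}\to\clk_\al$ in $\clk(M(\spd),C(\spd))$-topology; since $\Tbb$ is metrizable, sequential continuity yields continuity of $\al\mapsto\clk_\al[\ph]$.

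There is essentially no obstacle here: the statement is a soft consequence of Lemma~\ref{l_MS_wlim} plus the explicit formula for $P[\clk_\al]$. The only mild point worth a sentence is the uniform boundedness in $M(\spd)$, which one gets from $\|\clk_\al\| = \clk_\al(\spd) = P[\clk_\al](0) = \Rl\frac{\al+\ph(0)}{\al-\ph(0)} \le \frac{1+|\ph(0)|}{1-|\ph(0)|}$, a bound independent of $\al$. I would write the argument in two short sentences: first reduce via Lemma~\ref{l_MS_wlim} to pointwise convergence of $P[\clk_\al](z)$, then note that this convergence is immediate from the continuity of $\beta\mapsto\Rl\bigl(\frac{\beta+\ph(z)}{\beta-\ph(z)}\bigr)$ on $\Tbb$ for each fixed $z\in\bd$.
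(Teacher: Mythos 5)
Your argument is correct and is exactly the one the paper intends: the corollary is stated as an immediate consequence of Lemma~\ref{l_MS_wlim}, via the uniform bound $\|\clk_\al\|=P[\clk_\al](0)\le\frac{1+|\ph(0)|}{1-|\ph(0)|}$ and the continuity of $\al\mapsto\Rl\bigl(\frac{\al+\ph(z)}{\al-\ph(z)}\bigr)$ for each fixed $z\in\bd$. Nothing to add.
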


The following theorem is obtained in \cite{Aab87} for $d=1$.

\begin{theorem}\label{t_desint}
Let $\ph: \bd\to\Dbb$ be a holomorphic function and
let $\clk_\al = \clk_\al[\ph]$, $\al\in \Tbb$.
Then
\[
\int_{\Tbb}
\int_{\spd}  f\, d\clk_\al\, d\si_1(\al) = \int_{\spd} f\, d\si_d
\]
for all $f\in C(\spd)$.
\end{theorem}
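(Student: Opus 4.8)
The plan is to reduce the identity to the classical one-dimensional disintegration theorem by slicing. First I would fix a holomorphic $\ph\colon\bd\to\Dbb$ and, for each $\xi\in\spd$, consider the slice function $\ph_\xi(z)=\ph(z\xi)$, $z\in\Dbb$, which is a holomorphic self-map of $\Dbb$. The key observation is that each Clark measure $\clk_\al=\clk_\al[\ph]\in\plh(\spd)$, so by Proposition~\ref{p_slices} it admits a slice decomposition $\clk_\al=\int_{\ppd}(\clk_\al)_\za\,d\wsd(\za)$, and I would identify the slice measure $(\clk_\al)_\xi$ (supported on $\Tbb\xi$) with the one-dimensional Clark measure $\clk_\al[\ph_\xi]$ of the slice function, pushed forward to the circle $\Tbb\xi\subset\spd$ via $\lambda\mapsto\lambda\xi$. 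This identification follows from matching Poisson integrals: since $\ph$ is holomorphic, the invariant Poisson integral of $\clk_\al$ restricted to the slice $\{z\xi:z\in\Dbb\}$ equals $\Rl\bigl((\al+\ph(z\xi))/(\al-\ph(z\xi))\bigr)=\Rl\bigl((\al+\ph_\xi(z))/(\al-\ph_\xi(z))\bigr)$, which is exactly the Poisson integral (in $\Dbb$) characterizing $\clk_\al[\ph_\xi]$; uniqueness of the representing measure on $\Tbb$ then forces the identification.

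Next I would invoke the one-dimensional version of the theorem (stated as the $d=1$ case, from \cite{Aab87}): for the slice function $\ph_\xi$, one has $\int_{\Tbb}\int_{\Tbb}g\,d\clk_\al[\ph_\xi]\,d\si_1(\al)=\int_{\Tbb}g\,d\si_1$ for all $g\in C(\Tbb)$. Transporting this to the circle $\pi^{-1}(\za)\subset\spd$ and using the identification above gives, for each fixed $\za\in\ppd$ and each $f\in C(\spd)$,
\[
\int_{\Tbb}\int_{\spd}f\,d(\clk_\al)_\za\,d\si_1(\al)=\int_{\spd}f\,d\si_1^\za .
\]
Then I would integrate this over $\ppd$ against $\wsd$. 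On the right-hand side, $\int_{\ppd}\int_{\spd}f\,d\si_1^\za\,d\wsd(\za)=\int_{\spd}f\,d\si_d$ by \eqref{e_Leb_dcm}. On the left-hand side, I want to swap the $\al$-integral past the $\za$-integral and then recognize the inner double integral as $\int_{\spd}f\,d\clk_\al$ via the slice decomposition of Proposition~\ref{p_slices}. This yields exactly $\int_{\Tbb}\int_{\spd}f\,d\clk_\al\,d\si_1(\al)=\int_{\spd}f\,d\si_d$, as desired.

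The main obstacle is the justification of the interchange of integrals, i.e.\ Fubini's theorem for the iterated integral $\int_{\Tbb}\int_{\ppd}\int_{\spd}f\,d(\clk_\al)_\za\,d\wsd(\za)\,d\si_1(\al)$. I would handle this by a uniform norm bound: each $\clk_\al$ is a positive measure with $\|\clk_\al\|=P[\clk_\al](0)=\Rl\bigl((\al+\ph(0))/(\al-\ph(0))\bigr)$, which is bounded uniformly in $\al\in\Tbb$ (indeed by $(1+|\ph(0)|)/(1-|\ph(0)|)$), and by \eqref{e_muz_norm} the slice norms satisfy $\int_{\ppd}\|(\clk_\al)_\za\|\,d\wsd(\za)=\|\clk_\al\|$. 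Hence the triple integrand is dominated by $\|f\|_{C(\spd)}\|(\clk_\al)_\za\|$, which is integrable over $\Tbb\times\ppd$; moreover, to apply Fubini rigorously I would first check the required joint measurability of $(\al,\za)\mapsto\int_{\spd}f\,d(\clk_\al)_\za$, which follows from the weak continuity of $\al\mapsto\clk_\al$ (Corollary~\ref{c_slices_wcont}) together with the measurability inherent in the slice construction. A minor additional point is that the slice measures $(\clk_\al)_\za$ are only defined for $\wsd$-a.e.\ $\za$ and the exceptional set may a priori depend on $\al$; but since $\clk_\al$ is a \emph{positive} pluriharmonic measure, the remark after \eqref{e_u_xi} guarantees $(\clk_\al)_\za$ is defined for \emph{every} $\za\in\ppd$, which removes this difficulty.
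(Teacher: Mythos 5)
Your argument is correct, but it is not the paper's proof: it is essentially the alternative route that the authors themselves sketch in the Remark immediately following Theorem~\ref{t_desint}, namely deducing the case $d\ge 2$ from the known $d=1$ result of \cite{Aab87} via Proposition~\ref{p_slices} and Fubini's theorem. The paper's own proof is shorter and works uniformly for all $d\ge 1$: using Corollary~\ref{c_slices_wcont} it defines $\mu=\int_{\Tbb}\clk_\al\,d\si_1(\al)$ as a weak integral in $M(\spd)$, then computes
\[
\int_{\spd}P(z,\za)\,d\mu(\za)=\int_{\Tbb}\Rl\left(\frac{\al+\ph(z)}{\al-\ph(z)}\right)d\si_1(\al)=1=\int_{\spd}P(z,\za)\,d\si_d(\za),
\]
the middle equality being nothing but $\int_{\Tbb}(1-|w|^2)|\al-w|^{-2}\,d\si_1(\al)=1$ with $w=\ph(z)$, and concludes $\mu=\si_d$ from the uniqueness of a measure with a given invariant Poisson integral --- no slicing, no Fubini, no measurability issues. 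What your approach buys is structural information of independent interest: the identification of the slice measures $(\clk_\al)_\xi$ with the pushed-forward one-dimensional Clark measures $\clk_\al[\ph_\xi]$, which you justify correctly via uniqueness of the Herglotz representation of the positive harmonic function $z\mapsto\Rl\bigl((\al+\ph_\xi(z))/(\al-\ph_\xi(z))\bigr)$. The costs are that your proof treats $d=1$ as a black box (so it does not reprove the theorem as stated, which includes $d=1$), and that it requires the extra care you rightly flag: joint measurability of $(\al,\za)\mapsto\int_{\spd}f\,d(\clk_\al)_\za$, the uniform bound $\|\clk_\al\|\le(1+|\ph(0)|)/(1-|\ph(0)|)$ for Fubini, and the observation that positivity of $\clk_\al$ makes every slice measure defined. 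All of these points are handled adequately in your outline, so the argument is sound.
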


\begin{proof}
By Corollary~\ref{c_slices_wcont}, the functional
\[
f\mapsto\int_{\Tbb}\left(\int_{\spd} f \,d\clk_\al\right)\,d\si_1(\al)
\]
is defined for all $f\in C(\spd)$. Clearly, the functional is continuous on $C(\spd)$.
Hence, there exists a measure $\mu\in M(\spd)$
such that
\[
\mu=\int_{\Tbb}\clk_\al\,d\si_1(\al)
\]
in the weak sense.
Now, observe that
\begin{align*}
\int_{\spd} P(z,\za)\,d\mu(\za)
&=
\int_{\Tbb}\left(\int_{\spd}P(z,\za)\, d\clk_\al(\za)\right)\, d\si_1(\al) \\
&=
\int_{\Tbb}\Rl \left(\frac{\al+ \ph(z)}{\al-\ph(z)} \right)\,d\si_1(\al) \\
&= 1 \\
&= \int_{\spd} P(z,\za)\,d\si_d(\za)
\end{align*}
for all $z\in\bd$.
%Also, standard arguments guarantee that $\mu\in \plh(\spd)$.
%Therefore, the harmonic Poisson integral of $\mu$ coincides with $P[\mu]\equiv 1$.
Hence, $\mu=\si_d$.
\end{proof}

\begin{rem}
Proposition~\ref{p_slices} and Fubini's theorem allow to deduce Theorem~\ref{t_desint} for $d\ge 2$
from the corresponding result for $d=1$.
\end{rem}

\subsection{Absolutely continuous and singular parts of Clark measures}
Given an $\al\in\Tbb$
and a holomorphic function $\ph: \bd\to \Dbb$,
let $\clk_\al^a$ denote the absolutely continuous part
of the Clark measure $\clk_\al = \clk_\al[\ph]$.
The definition of $\clk_\al$ and basic properties of Poisson integrals guarantee that
\[
d\clk_\al^a (\za)= \frac{1-|\ph(\za)|^2}{|\al - \ph(\za)|^2}\, d\si_d(\za),
\]
where $\ph(\za) = \lim_{r\to 1-} \ph(r\za)$. Recall that $\ph(\za)$ is defined
for $\si_d$-a.e.\ $\za\in\spd$.

Since $\clk_\al$ is a positive measure, we have
\[
\|\clk_\al\| = P[\clk_\al](0) = \frac{1-|\ph(0)|^2}{|\al - \ph(0)|^2}.
\]
Therefore,
\begin{equation}\label{e_sclk_norm}
\frac{1-|\ph(0)|^2}{|\al - \ph(0)|^2} -
\int_{\spd}\frac{1-|\ph(\za)|^2}{|\al - \ph^(\za)|^2}\, d\si_d(\za) = \|\clk_\al^s\|,
\end{equation}
where $\clk_\al^s$ denotes the singular part of $\clk_\al$.

\subsection{Clark measures and Cauchy kernels}
The following lemma is a particular case of Theorem~1
from \cite[Chap.~V, \S21, Sect.~66]{Shab92}.

\begin{lemma}\label{l_diag}
Let $F$ be a holomorphic function on $\bd\times \bd$.
If $F(z, \overline{z}) =0$ for all $z\in\bd$,
then $F(z, w) =0$ for all $(z, w) \in \bd\times \bd$.
\end{lemma}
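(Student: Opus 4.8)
Lemma~\ref{l_diag} is an instance of a standard \emph{separate-analyticity/uniqueness} phenomenon, and the plan is to prove it by combining a Taylor expansion at the origin with the identity theorem for holomorphic functions of several complex variables. The only genuine ingredient is the linear independence over $\Cbb$ of the functions $z\mapsto z^m\overline{z}^n$, $m,n\in\Nbb^d$, on any ball about $0$ in $\cd$; this reflects the fact that the antidiagonal $\{(z,\overline{z}):z\in\bd\}$ is a totally real submanifold of $\cd\times\cd$ of maximal real dimension, hence a uniqueness set for holomorphic functions on $\cd\times\cd$.

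To carry this out, I would fix $r\in(0,1/\sqrt{d}\,)$ and set $V=\{(z,w):|z_j|<r,\ |w_j|<r,\ 1\le j\le d\}$, a polydisc with $\overline{V}\subset\bd\times\bd$, on which the Taylor series of $F$ at $(0,0)$,
\[
F(z,w)=\sum_{m,n\in\Nbb^d} c_{mn}\, z^m w^n ,
\]
converges absolutely and uniformly (here $z^m=z_1^{m_1}\cdots z_d^{m_d}$ and $m!=m_1!\cdots m_d!$). If $|z_j|<r$ for every $j$, then $z\in\bd$ and $(z,\overline{z})\in V$, so the hypothesis gives
\[
g(z):=\sum_{m,n\in\Nbb^d} c_{mn}\, z^m\overline{z}^n = F(z,\overline{z}) = 0 .
\]
Since this series converges uniformly on compact subsets of the polydisc $\{|z_j|<r\}$, it may be differentiated termwise; applying the Wirtinger operators $\partial_z^{m}\partial_{\overline{z}}^{n}$ at $z=0$ to the identity $g\equiv 0$ yields $m!\,n!\,c_{mn}=0$, hence $c_{mn}=0$ for all $m,n$, i.e.\ $F\equiv 0$ on $V$. (Equivalently, one may fix the moduli $|z_j|=\rho_j$ and integrate $g$ over the torus against the characters $e^{i\langle k,\theta\rangle}$, $k\in\mathbb{Z}^d$, concluding that each convergent power series $\sum_{m-n=k}c_{mn}\,\rho_1^{m_1+n_1}\cdots\rho_d^{m_d+n_d}$ vanishes for all small $\rho_j>0$, whence all its coefficients vanish.)

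Finally, since $\bd\times\bd$ is a connected open subset of $\cd\times\cd$ on which $F$ is holomorphic, the identity theorem propagates the vanishing of $F$ from the polydisc $V$ to all of $\bd\times\bd$, which is the assertion. I expect no real obstacle here: the entire content is the coefficient extraction from $g\equiv 0$ near the origin, and one could alternatively reduce to the classical one-variable statement (holomorphic functions on $\Dbb\times\Dbb$ vanishing on the antidiagonal) in place of the direct multivariable computation.
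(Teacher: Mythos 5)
Your proof is correct and complete: the Taylor expansion of $F$ at the origin on a polydisc $\{|z_j|<r\}\times\{|w_j|<r\}$ with $r<1/\sqrt{d}$, the extraction of the coefficients $c_{mn}$ from the identity $\sum c_{mn}z^m\overline{z}^n\equiv 0$ via Wirtinger derivatives (or torus integration), and the identity theorem on the connected domain $\bd\times\bd$ together give the assertion. The paper itself supplies no proof here -- it simply cites Theorem~1 of Shabat \cite[Chap.~V, \S21, Sect.~66]{Shab92}, of which this is a particular case -- and your argument is exactly the standard self-contained proof of that uniqueness statement.
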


\begin{proposition}\label{p_cauchy_dbl}
Let $\ph: \bd\to\Dbb$, $d\ge 2$, be a holomorphic function and
let $\clk_\al = \clk_\al[\ph]$, $\al\in \Tbb$.
 Then
  \[
  \int_{\spd} C(z, \za) C(\za, w)\, d\clk_\al(\za) =
  \frac{1- \ph(z)\overline{\ph(w)}}{(1-\overline{\al}{\ph(z)})(1-\al\overline{\ph(w)})} C(z,w)
  \]
for all $\al\in\Tbb$, $z, w \in\bd$.
\end{proposition}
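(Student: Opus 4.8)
The plan is to verify the identity first on the diagonal $w = \overline{z}$ — where it reduces to the defining property of the Clark measure via the invariant Poisson kernel — and then to propagate it to all $(z,w) \in \bd \times \bd$ by the holomorphic rigidity lemma, Lemma~\ref{l_diag}. Concretely, both sides of the claimed equality are holomorphic in $z \in \bd$ and in $w \in \bd$ separately (for the left-hand side this is because $C(z,\za)$ is holomorphic in $z$ and $C(\za, w) = \overline{C(w,\za)}$ is holomorphic in $w$, uniformly enough in $\za$ to differentiate under the integral sign; for the right-hand side it is manifest since $|\ph| < 1$ and $|\al| = 1$ keep the denominators away from zero). So it suffices to show the two sides agree when $w$ is replaced by $\overline{z}$, i.e. to show
\[
\int_{\spd} C(z, \za) C(\za, z)\, d\clk_\al(\za) = \frac{1 - |\ph(z)|^2}{|1 - \overline{\al}\ph(z)|^2}\, C(z, z), \quad z \in \bd.
\]

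To establish the diagonal identity, I would divide both sides by $C(z,z) = (1-|z|^2)^{-d}$ and recognize the left-hand side as $\int_{\spd} P(z,\za)\, d\clk_\al(\za) = P[\clk_\al](z)$, using the definition $P(z,\za) = C(z,\za)C(\za,z)/C(z,z)$ recorded in the introduction. By the very definition of the Clark measure $\clk_\al = \clk_\al[\ph]$ we have
\[
P[\clk_\al](z) = \Rl\!\left(\frac{\al + \ph(z)}{\al - \ph(z)}\right) = \frac{1 - |\ph(z)|^2}{|\al - \ph(z)|^2},
\]
and since $|\al| = 1$ the denominator $|\al - \ph(z)|^2$ equals $|1 - \overline{\al}\ph(z)|^2 = (1 - \overline{\al}\ph(z))(1 - \al\overline{\ph(z)})$. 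Multiplying back by $C(z,z)$ gives exactly the right-hand side evaluated at $w = \overline{z}$, so the two holomorphic functions of $(z,w)$ coincide on the diagonal.

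Finally, I would apply Lemma~\ref{l_diag} to the difference $F(z,w)$ of the two sides. The only point requiring a little care — and the place I expect to spend the most effort — is justifying that $F$ is genuinely holomorphic on $\bd \times \bd$ and that the $\za$-integral may be differentiated freely: one must check that $C(z,\za)C(\za,w)$ and its derivatives in $z$ and $w$ are bounded on $\spd$ uniformly for $(z,w)$ in compact subsets of $\bd \times \bd$, which follows because $|\langle z, \za\rangle| \le |z| < 1$ and $|\langle \za, w\rangle| \le |w| < 1$ there, keeping the Cauchy kernels bounded; then $F$ is a locally uniform limit of the finite-measure analogue and hence holomorphic, and $\clk_\al$ being a finite positive measure makes dominated convergence applicable throughout. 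Once holomorphy is in hand, Lemma~\ref{l_diag} yields $F \equiv 0$ on $\bd \times \bd$, which is the assertion.
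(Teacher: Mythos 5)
Your proposal is correct and follows essentially the same route as the paper: establish the identity on the diagonal by recognizing $\int_{\spd} C(z,\za)C(\za,z)\,d\clk_\al(\za)$ as $P[\clk_\al](z)\,C(z,z)$ and invoking the defining property of the Clark measure, then extend to all $(z,w)$ via Lemma~\ref{l_diag}. The only additions are the routine justifications of holomorphy and differentiation under the integral sign, which the paper leaves implicit.
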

\begin{proof}
  The equality
  \[
  \int_{\spd} P(z, \za) \, d\clk_\al(\za) = \frac{1-|\ph(z)|^2}{|\al- \ph(z)|^2}, \quad z\in \bd,
  \]
  and the definition of $P(z,\za)$ guarantee that
   \[
  \int_{\spd} C(z, \za) C (\za, z)\, d\clk_\al(\za) = \frac{1-|\ph(z)|^2}{|\al- \ph(z)|^2} C(z,z), \quad z\in \bd.
  \]
  It remains to apply Lemma~\ref{l_diag}.
\end{proof}

\begin{corollary}\label{c_cauchyof_clk}
Let $\ph: \bd\to\Dbb$, $d\ge 2$, be a holomorphic function.
Then
\[
\int_{\spd} C(z, \za)\, d\clk_\al[\ph](\za) = \frac{1}{1-\overline{\al} \ph(z)} + \frac{\al\overline{\ph(0)}}{1-\al\overline{\ph(0)}}
\]
for all $\al\in\Tbb$, $z\in\bd$.
\end{corollary}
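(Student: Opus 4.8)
The plan is to deduce this directly from Proposition~\ref{p_cauchy_dbl} by specializing to $w=0$, followed by an elementary partial-fraction simplification. First I would note that $C(z,0)=(1-\langle z,0\rangle)^{-d}=1$ for every $z\in\bd$, and likewise $C(\za,0)=1$ for every $\za\in\spd$. Substituting $w=0$ in the identity of Proposition~\ref{p_cauchy_dbl} therefore collapses the left-hand side to $\int_{\spd} C(z,\za)\,d\clk_\al(\za)$ and the right-hand side to $\dfrac{1-\ph(z)\overline{\ph(0)}}{(1-\overline{\al}\ph(z))(1-\al\overline{\ph(0)})}$, valid for all $\al\in\Tbb$ and $z\in\bd$.

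Next I would rewrite this rational expression. Since $|\al|=1$, we have $\ph(z)\overline{\ph(0)}=(\overline{\al}\ph(z))(\al\overline{\ph(0)})$; putting $a=\overline{\al}\ph(z)$ and $b=\al\overline{\ph(0)}$ — both of modulus strictly less than $1$ because $\ph$ maps $\bd$ into $\Dbb$, so neither factor in the denominator vanishes — the quantity above equals $\dfrac{1-ab}{(1-a)(1-b)}$. The elementary identity $\dfrac{1-ab}{(1-a)(1-b)}=\dfrac{1}{1-a}+\dfrac{b}{1-b}$, which is immediate after bringing the right-hand side to a common denominator (the numerator being $(1-b)+b(1-a)=1-ab$), then yields exactly $\dfrac{1}{1-\overline{\al}\ph(z)}+\dfrac{\al\overline{\ph(0)}}{1-\al\overline{\ph(0)}}$, as claimed.

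There is essentially no obstacle here: the whole argument is the $w=0$ case of the already-established Proposition~\ref{p_cauchy_dbl} plus a one-line computation. The only point worth flagging is the use of $|\al|=1$, which is precisely what lets the cross term $\ph(z)\overline{\ph(0)}$ be absorbed into the product $ab$ and hence makes the partial-fraction split produce the two stated summands; note also that the formula remains correct when $\ph(0)=0$, in which case the second summand simply vanishes.
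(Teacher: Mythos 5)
Your proposal is correct and follows the paper's proof exactly: the paper also obtains the corollary by setting $w=0$ in Proposition~\ref{p_cauchy_dbl}, and your partial-fraction simplification $\frac{1-ab}{(1-a)(1-b)}=\frac{1}{1-a}+\frac{b}{1-b}$ with $a=\overline{\al}\ph(z)$, $b=\al\overline{\ph(0)}$ is the (routine) algebra the paper leaves implicit.
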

\begin{proof}
  Apply Proposition~\ref{p_cauchy_dbl} with $w=0$.
\end{proof}

\section{Clark measures and inner functions}\label{s_clk_inner}
In this section, we restrict our attention to the case, where $\ph$ is an inner function.
So, we use the symbol $I$ in the place of $\ph$.

\subsection{Total singularity and Clark measures}
As indicated in the introduction, if $I: \bd\to \Dbb$ is inner, then
$\clk_\al=\clk_\al[I]$ is singular for any $\al\in\Tbb$.
Moreover, by the following lemma, $\clk_\al$ is totally singular in the sense of Definition~\ref{d_totnull}.

\begin{lemma}\label{l_tot_sng}
Let $I$ be an inner function in $\bd$, $d\ge 2$.
Then $\clk_\al = \clk_\al[I]$ is totally singular for any $\al\in\Tbb$.
\end{lemma}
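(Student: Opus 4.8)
The plan is to deduce total singularity of $\clk_\al[I]$ from the already-established total singularity of the one-dimensional slice measures, using the decomposition machinery from Section~\ref{s_aux_plh}. Recall that $\clk_\al\in\plh(\spd)$ by construction, so Proposition~\ref{p_slices} applies: $\clk_\al=\int_{\ppd}(\clk_\al)_\za\,d\wsd(\za)$ in the weak sense, where the slice measures $(\clk_\al)_\za$ are supported on the circles $\pi^{-1}(\za)$. Moreover, since $I$ is inner in $\bd$, for $\si_d$-a.e.\ $\xi\in\spd$ the slice function $I_\xi(\lambda)=I(\lambda\xi)$ is a holomorphic self-map of $\Dbb$ whose radial boundary values are unimodular $\si_1$-a.e.; hence $I_\xi$ is either an inner function on $\Dbb$ or a unimodular constant. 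A quick computation with Poisson integrals shows that $(\clk_\al)_\za$ is exactly the one-dimensional Clark measure $\clk_\al[I_\xi]$ on the circle $\pi^{-1}(\za)$ (this uses that $P[\clk_\al](\lambda\xi)=\Rl\frac{\al+I_\xi(\lambda)}{\al-I_\xi(\lambda)}$, which is immediate from the definition of $\clk_\al$). In particular, each $(\clk_\al)_\za$ is a classical one-dimensional Clark measure, hence singular with respect to $\si_1^\za$ — consistent with Lemma~\ref{l_sgl}.

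\textbf{The main step} is then to pass from slice-wise singularity to total singularity on $\spd$. Fix a representing measure $\rho\in M_0(\spd)$; I want to show $\clk_\al\perp\rho$. The key classical fact I would invoke is the characterization of totally null sets in terms of slices: a set $E\subset\spd$ is totally null if and only if, for $\wsd$-a.e.\ $\za\in\ppd$, the slice $E\cap\pi^{-1}(\za)$ has one-dimensional Lebesgue measure zero (this is a standard consequence of the Henkin--Valskii-type description of the annihilator of the ball algebra, and of the Cole--Range results; it also underlies Theorem~\ref{t_aif}). Given this, the argument is: for each $\za$, the one-dimensional Clark measure $(\clk_\al)_\za=\clk_\al[I_\xi]$ is carried by a Borel set $N_\za\subset\pi^{-1}(\za)$ with $\si_1^\za(N_\za)=0$; one can arrange (by a measurable selection, using that $\za\mapsto I_\xi$ and hence $\za\mapsto(\clk_\al)_\za$ is suitably measurable) a Borel set $N\subset\spd$ with $N\cap\pi^{-1}(\za)=N_\za$, so that $\clk_\al(\spd\setminus N)=0$ by the weak decomposition, while $\si_1^\za(N\cap\pi^{-1}(\za))=0$ for $\wsd$-a.e.\ $\za$, i.e.\ $N$ is totally null. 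Hence $\rho(N)=0$ for every $\rho\in M_0(\spd)$, which gives $\clk_\al\perp\rho$ and therefore $\clk_\al$ is totally singular.

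\textbf{An alternative, and perhaps cleaner, route} avoids the slice characterization of totally null sets: it applies Theorem~\ref{t_aif} directly. Indeed $\clk_\al\in\plh(\spd)$ and $\clk_\al=\clk_\al^s$ (the absolutely continuous part vanishes, since $P[\clk_\al]=\frac{1-|I|^2}{|\al-I|^2}=0$ $\si_d$-a.e.\ by innerness, as noted in the introduction); Theorem~\ref{t_aif} asserts precisely that $\mu^s$ is totally singular for any $\mu\in\plh(\spd)$. So in fact the lemma is an immediate corollary of Theorem~\ref{t_aif}, and I would present it that way: $\clk_\al=\clk_\al^s$, hence $\clk_\al$ is totally singular. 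The only point requiring a line of justification is $\clk_\al^a=0$, which follows from the explicit formula $d\clk_\al^a(\za)=\frac{1-|I(\za)|^2}{|\al-I(\za)|^2}\,d\si_d(\za)$ recorded in Section~\ref{s_aux_clk} together with $|I(\za)|=1$ $\si_d$-a.e.

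\textbf{The main obstacle} in the first (self-contained) approach is the measurable-selection / measurability bookkeeping needed to assemble the global carrier set $N$ from the slice carriers $N_\za$, together with citing the correct form of the slice characterization of totally null sets. Since Theorem~\ref{t_aif} is already available in the excerpt, the second approach sidesteps this entirely, and I expect the paper to take that shorter path; the only thing to be careful about there is that the cited Theorem~\ref{t_aif} is stated for general pluriharmonic $\mu$ and gives total singularity of $\mu^s$, so one genuinely needs the identity $\clk_\al=\clk_\al^s$ rather than merely $\clk_\al\perp\si_d$.
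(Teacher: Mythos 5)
Your recommended route---$\clk_\al$ is a singular positive pluriharmonic measure, hence $\clk_\al=\clk_\al^s$, and Theorem~\ref{t_aif} then gives total singularity---is correct, and the paper records exactly this argument in the remark immediately following the lemma. The proof the paper actually gives in the body of the lemma is different and more elementary: since $\clk_\al$ is a positive measure singular with respect to $\si_d$, it is carried by the set $E=\{\za\in\spd:\lim_{r\to 1-}P[\clk_\al](r\za)=+\infty\}$; because $1-|I(r\za)|^2\le 2|\al-I(r\za)|$, membership in $E$ forces $\lim_{r\to 1-}I(r\za)=\al$, so $E\subset E_0=\{\za:\lim_{r\to 1-}I(r\za)=\al\}$, and $E_0$ is totally null by \cite[Theorem~9.3.2]{Ru80}. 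This avoids appealing to the externally proved and nontrivial Theorem~\ref{t_aif} and uses only the classical total nullity of radial level sets of inner functions; what your route buys in exchange is that it applies verbatim to $\clk_\al^s[\ph]$ for an arbitrary holomorphic $\ph:\bd\to\Dbb$, not only to inner $I$.

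A warning about your first, ``self-contained'' route: the ``key classical fact'' you invoke---that $E\subset\spd$ is totally null if and only if $\si_1^\za(E\cap\pi^{-1}(\za))=0$ for $\wsd$-a.e.\ $\za\in\ppd$---is false, so that argument has a genuine gap independent of the measurable-selection issues you flag. For $d=2$ take $E=\{(r,\sqrt{1-r^2}\,e^{i\theta}):\theta\in[0,2\pi)\}$ with $0<r<1$ fixed. Every circle $\pi^{-1}(\za)$ meets $E$ in at most one point (if $\xi_1\ne 0$ the first coordinate determines $\lambda$ in $\lambda\xi$; if $\xi_1=0$ the circle misses $E$), so all slices are $\si_1^\za$-null; yet normalized arc length on $E$ represents the interior point $(r,0)$ by the one-variable mean value property in the second coordinate, so $E$ carries a representing measure and is not totally null. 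Thus slice-wise singularity of the $(\clk_\al)_\za$ cannot by itself yield total singularity of $\clk_\al$; some genuinely higher-dimensional input (Rudin's Theorem~9.3.2, or Theorem~\ref{t_aif}) is indispensable, and you should present the lemma via one of those.
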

\begin{proof}
Fix an $\al\in\Tbb$ and put
  \[
  E = \left\{\za\in\spd: \lim_{r\to 1-}\frac{1-|I(r\za)|^2}{|\al-I(r\za)|^2} =+\infty \right\}.
  \]
Since $\clk_\al$ is a positive singular measure, we have $\clk_\al(\spd\setminus E) = 0$.
Observe that $E\subset E_0 = \{\za\in\spd: \lim_{r\to 1-} I(r\za) =\al\}$.
By \cite[Theorem~9.3.2]{Ru80}, $E_0$ is a totally null set.
Therefore, $\clk_\al$ is totally singular.
\end{proof}

\begin{rem}
  Since $\clk_\al = \clk_\al[I]$, $\al\in\Tbb$, is a singular pluriharmonic measure, Theorem~\ref{t_aif}
  also guarantees that $\clk_\al$ is totally singular.
\end{rem}

\subsection{The ball algebra $A(\bd)$ and $L^2(\clk_\al)$}
We will need the following classical notion.

\begin{df}[see {\cite[Sect.~9.1.5]{Ru80}}]\label{d_Henkin}
We say that $\mu\in M(\spd)$ is a \textsl{Henkin measure} if
\[
\lim_{j\to\infty} \int_{\spd} f_j\, d\mu =0
\]
for any bounded sequence $\{f_j\}_{j=1}^\infty \subset A(\bd)$ with the following property:
\[
\lim_{j\to\infty} f_j(z) = 0 \quad\textrm{for any\ } z\in\bd.
\]
\end{df}

\begin{lemma}\label{l_AB_dense}
Let $I$ be an inner function in $\bd$ and let $\clk_\al = \clk_\al[I]$, $\al\in\Tbb$.
Then the ball algebra $A(\bd)$ is dense in $L^2(\clk_\al)$.
\end{lemma}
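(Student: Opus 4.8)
The plan is to show that the orthogonal complement of $A(\bd)$ in $L^2(\clk_\al)$ is trivial. So suppose $g\in L^2(\clk_\al)$ satisfies
\[
\int_{\spd} f \overline{g}\, d\clk_\al = 0 \quad\text{for all } f\in A(\bd);
\]
I want to conclude $g=0$ $\clk_\al$-a.e. The natural idea is to test against the Cauchy kernels $f = C(\cdot,\za)$, but these are not in $A(\bd)$; instead I would use the holomorphic polynomials, which are dense in $A(\bd)$, and reassemble them into Cauchy kernels $C(z,\cdot)$ for $z\in\bd$ via the expansion $C(z,\za) = \sum_k \binom{d-1+k}{k}\langle z,\za\rangle^k$, which converges uniformly in $\za\in\spd$ for each fixed $z\in\bd$. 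Thus the orthogonality hypothesis gives
\[
\int_{\spd} C(z,\za)\, \overline{g(\za)}\, d\clk_\al(\za) = 0 \quad\text{for all } z\in\bd,
\]
i.e. the Cauchy transform of the measure $\overline{g}\,\clk_\al$ vanishes identically on $\bd$.

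Next I would invoke the fact (Definition~\ref{d_Henkin} and the surrounding discussion in \cite[Sect.~9.1.5]{Ru80}, which the excerpt references) that $\clk_\al$ is a Henkin measure — this follows because $\clk_\al\in\plh(\spd)$ and, more to the point, because for a singular pluriharmonic measure the total singularity from Lemma~\ref{l_tot_sng} (or Theorem~\ref{t_aif}) together with the characterization of Henkin measures shows that any measure absolutely continuous with respect to $\clk_\al$, in particular $\overline{g}\,\clk_\al$, is again Henkin. A Henkin measure whose Cauchy transform vanishes on all of $\bd$ must be the zero measure: this is the standard fact that a Henkin measure annihilating all $z\mapsto C(z,\za)$, equivalently annihilating the uniform closure of the polynomials under the weak-star-type convergence in Definition~\ref{d_Henkin}, annihilates every $f\in H^\infty(\bd)$, hence is $\bot$ itself. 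Concretely, $\overline{g}\,\clk_\al \bot \si_d$ (since $\clk_\al\bot\si_d$) and $\overline{g}\,\clk_\al$ is Henkin with vanishing Cauchy transform, which by the $F.$ and M. Riesz type theorem for the ball (Henkin measures with zero Cauchy transform are zero) forces $\overline{g}\,\clk_\al = 0$, whence $g=0$ $\clk_\al$-a.e.

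I expect the main obstacle to be pinning down precisely which form of the ``Henkin measure with vanishing Cauchy transform is zero'' statement to cite, and verifying that $\overline{g}\,\clk_\al$ is indeed Henkin. For the latter: absolute continuity alone does not preserve the Henkin property in general, but here one can argue via total singularity — by Lemma~\ref{l_tot_sng}, $\clk_\al$ is concentrated on a totally null Borel set $E$, and a measure concentrated on a totally null set with vanishing Cauchy transform must vanish, since its Cauchy transform vanishing on $\bd$ together with being carried by a totally null set puts it in the band of ``totally singular'' measures, on which the Cauchy transform is injective. So the cleanest route is: reduce to $\overline{g}\,\clk_\al$ being totally singular (immediate from $\clk_\al$ totally singular, Lemma~\ref{l_tot_sng}), then use that the Cauchy transform is injective on totally singular measures — a result that can be extracted from Corollary~\ref{c_cauchy_aab}/Theorem~\ref{t_aif} or cited directly from \cite{Ru80}. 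Then $\overline{g}\,\clk_\al=0$, so $g=0$ in $L^2(\clk_\al)$, proving density of $A(\bd)$.
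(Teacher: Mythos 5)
Your overall strategy (show that the annihilator of $A(\bd)$ in $L^2(\clk_\al)$ is trivial, using the total singularity from Lemma~\ref{l_tot_sng}) is the right one, but the crucial step is broken. The statement you lean on in the middle of the argument --- ``a Henkin measure whose Cauchy transform vanishes on all of $\bd$ must be the zero measure'' --- is false for $d\ge 2$: the measure $\za_1\overline{\za_2}\,\si_d$ is absolutely continuous with respect to $\si_d$, hence Henkin, and its Cauchy transform vanishes identically because $\int_{\spd}\za^{\gamma}\overline{\za}^{\delta}\,d\si_d=0$ whenever $\gamma\neq\delta$. Relatedly, your remark that total singularity of $\clk_\al$ ``shows that any measure absolutely continuous with respect to $\clk_\al$ is again Henkin'' is backwards: by the Cole--Range theorem a Henkin measure is absolutely continuous with respect to some representing measure, so a nonzero totally singular measure is never Henkin. (There is also a conjugation slip: from $\int f\overline{g}\,d\clk_\al=0$ for all polynomials $f$ you obtain the vanishing of the Cauchy transform of $g\clk_\al$, not of $\overline{g}\,\clk_\al$, since $C(z,\cdot)$ is anti-holomorphic in the second variable; this is harmless but should be fixed.)

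Your fallback --- that the Cauchy transform is injective on totally singular measures --- is true, but it cannot be ``extracted from Corollary~\ref{c_cauchy_aab} or Theorem~\ref{t_aif}'', which concern pluriharmonic measures; $g\clk_\al$ is not known to be pluriharmonic here (that is essentially the content of Theorem~\ref{t_ksm_plh}). The fact you actually need is exactly the Cole--Range theorem, and once you invoke it the detour through Cauchy kernels is unnecessary. The paper argues as follows: if $h\in L^2(\clk_\al)$ annihilates $A(\bd)$, then $h\clk_\al\in A(\bd)^{\perp}$ is a Henkin measure directly from Definition~\ref{d_Henkin} (all the integrals $\int_{\spd} f_j h\,d\clk_\al$ are zero, so their limit is zero); by Cole--Range, $h\clk_\al\ll\rho$ for some $\rho\in M_0(\spd)$; but $h\clk_\al\ll\clk_\al$ is totally singular by Lemma~\ref{l_tot_sng}, so $h\clk_\al\perp\rho$, forcing $h\clk_\al=0$. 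You should replace the two incorrect citations with this single application of the Cole--Range theorem.
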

\begin{proof}
Assume that $A(\bd)$ is not dense in $L^2(\clk_\al)$. Then there exists a non-trivial function $h\in L^2(\clk_\al)$
such that $h\clk_\al \in A(\bd)^\bot$, that is,
\[
\int_{\spd} f h\, d\clk_\al =0\quad \textrm{for all}\ f\in A(\bd).
\]
 So, $h\clk_\al$ is clearly
 a Henkin measure.
%(see \cite[Sect.~9.1.5]{Ru80}).
Hence, by the Cole--Range theorem (see \cite{CR72} or \cite[Theorem~9.6.1]{Ru80}), $h\clk_\al\ll\rho$
for some representing measure $\rho\in M_0(\spd)$.
However, $h\clk_\al\bot \rho$ by Lemma~\ref{l_tot_sng}. This contradiction finishes the proof of the lemma.
\end{proof}

\subsection{Inner functions with additional properties}
The definition of an inner function in $\bd$, $d\ge 2$,
is based on existence of the corresponding radial limits $\si_d$-a.e.
However, given an $f\in H^\infty(\bd)$, the slice function $f_\za(\lad) = f(\lad\za)$, $\lad\in \Dbb$,
has radial limits $\si_1$-a.e.
In fact, if $I$ is an arbitrary inner function in $\bd$, then
$I_\za$ is clearly an inner function in $\Dbb$ for $\si_d$-a.e.\ $\za\in\spd$.
This observation leads to further questions and results.
On the one hand, given a $\za\in\spd$,
there exists an inner function $I$ such that $I_\za$ is not inner.
Moreover, the following theorem holds.

\begin{theorem}[{\cite[Corollary~1 after Theorem~4]{Aab82}}]\label{t_inner_subball}
Let $d, n\in \Nbb$, $d>n$.
Given a holomorphic function $g: B_n \to \Dbb$, there exists an inner function $I$ in $\bd$
such that $I(z, 0) = g(z)$ for all $z\in\bd$.
\end{theorem}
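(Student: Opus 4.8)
The plan is to obtain $I$ as a limit of functions $f_0,f_1,f_2,\dots\in H^\infty(\bd)$ with $\|f_k\|_\infty\le 1$, built by successive corrections of $g$. Write points of $\cd$ as $(z,w)$ with $z\in\Cbb^n$, $w\in\Cbb^{d-n}$, and let $\mathcal J=\{h\in H^\infty(\bd):h(z,0)\equiv 0\}=\bigl\{\sum_j w_jh_j:h_j\in H^\infty(\bd)\bigr\}$ be the ideal of functions vanishing on the slice $B_n\times\{0\}$. Start with $f_0(z,w)=g(z)$, and at each step produce $f_{k+1}$ with $f_{k+1}-f_k\in\mathcal J$ (so that $f_k(z,0)=g(z)$ for every $k$ and every $z\in B_n$), with the inner defect $D(f_k):=\int_{\spd}\bigl(1-|f_k|^2\bigr)\,d\si_d$ tending to $0$, and with $\sum_k\|f_{k+1}-f_k\|_{L^2(\si_d)}<\infty$. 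Then $I:=\lim_k f_k$ exists in $H^2(\bd)$, hence also locally uniformly in $\bd$, so $\|I\|_\infty\le1$ and $I(z,0)=g(z)$ for all $z\in B_n$; moreover $D(f_k)\to D(I)$, whence $D(I)=0$, i.e.\ $|I|=1$ $\si_d$-a.e.\ on $\spd$. By the maximum principle $I$ is non-constant and $|I|<1$ in $\bd$, so $I\colon\bd\to\Dbb$, and $I$ is not a unimodular constant since $|I(0)|=|g(0)|<1$. Thus $I$ is an inner function extending $g$.

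The hypothesis $d>n$ enters precisely here. The boundary trace $\Gamma:=\partial B_n\times\{0\}\subset\spd$ of the slice has real dimension $2n-1$, which is smaller than the real dimension $2d-1$ of $\spd$, so $\si_d(\Gamma)=0$; and on $\spd$ one has $w=0$ only along $\Gamma$. Hence forcing every correction to vanish on $B_n\times\{0\}$ — which is what pins down $I(z,0)=g(z)$ — is compatible with $|I|=1$ $\si_d$-a.e.\ on $\spd$: near $\Gamma$ we simply make no attempt to push $|f_k|$ up to $1$. Concretely, at step $k$ one fixes a neighborhood $\Omega_k\supset\Gamma$ in $\spd$ of arbitrarily small $\si_d$-measure, leaves $|f_k|$ uncontrolled on $\Omega_k$, and reduces only the part of the defect carried by the compact set $\spd\setminus\Omega_k$, on which $|w|$ is bounded below. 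There $w_1,\dots,w_{d-n}$ do not vanish simultaneously, so corrections assembled from the $w_j$ (times the modulus-raising functions below) both lie in $\mathcal J$ and remain effective.

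The substantive ingredient is the one-step correction lemma: given $f\in H^\infty(\bd)$ with $\|f\|_\infty\le1$, a compact $L\subset\spd$ disjoint from $\Gamma$, and $\er>0$, there is $F\in H^\infty(\bd)$ with $\|F\|_\infty\le1$, $F-f\in\mathcal J$, $\int_L\bigl(1-|F|^2\bigr)\,d\si_d<\er$, and $\|F-f\|_{L^2(\si_d)}$ controlled by $D(f)$ (and by the free parameter $\de$ below). This is exactly the mechanism behind the construction of non-constant inner functions in the ball (Aleksandrov; L{\o}w; see \cite[Ch.~19]{Ru80}): on the ``bad set'' $\{|f|<1-\de\}$ one corrects $f$ by suitably rescaled and rotated copies of explicit ``almost inner'' polynomials (of Ryll--Wojtaszczyk type), combined so as to raise $|f|$ close to $1$ there while keeping $\|F\|_\infty\le1$; the only change needed here is to multiply each building block by a coordinate $w_j$ adapted to the patch of $\spd\setminus\Omega_k$ at hand, which places the correction in $\mathcal J$ without spoiling its effect away from $\Gamma$. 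Choosing $\de_k$, $\Omega_k$ and $\er_k$ to decay fast enough then gives $D(f_k)\to0$ and $\sum_k\|f_{k+1}-f_k\|_{L^2(\si_d)}<\infty$. I expect this one-step lemma — already the core of \cite{Aab82} — to be the main obstacle; the rest is the bookkeeping of the iteration.
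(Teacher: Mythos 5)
The paper gives no proof of this statement: it is quoted verbatim from \cite[Corollary~1 after Theorem~4]{Aab82}, so there is nothing internal to compare your argument against. Your outline is exactly the construction used in that reference (and in L{\o}w's and Rudin's treatments of inner functions in the ball): iterate small corrections lying in the ideal generated by $w_1,\dots,w_{d-n}$, exploiting the fact that $\{w=0\}\cap\spd=\partial B_n\times\{0\}$ is $\si_d$-null, so that the slice value $I(z,0)=g(z)$ is preserved while the defect $\int_{\spd}(1-|f_k|^2)\,d\si_d$ is driven to $0$; your identification of where $d>n$ enters is correct, and the limiting/bookkeeping steps are sound. The only caveat is that you do not prove the one-step correction lemma (producing $F$ with $\|F\|_\infty\le 1$, $F-f$ in the ideal, and a quantitative decrease of the defect away from $\partial B_n\times\{0\}$); since that lemma is the entire analytic content of the cited corollary, your argument is a correct reduction to the literature rather than a self-contained proof --- which is also all the paper itself does.
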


On the other hand, we have the following result,
which gives a positive answer to a question asked by W.~Rudin \cite[Sect.~19.1]{Ru86}.

\begin{theorem}[\cite{Dup88}; see also \cite{DJFA2k}]\label{t_dup}
Let $d\ge 2$. There exists an inner function $I$ in $\bd$ such that
$I_\za(\lad):= I(\lad\za)$, $\lad\in\Dbb$, is an inner function in $\Dbb$ for all $\za\in\spd$.
\end{theorem}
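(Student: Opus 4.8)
The plan is to produce the desired function as a singular inner function attached to a suitable pluriharmonic measure, thereby reducing the theorem to a statement about the slice measures of Proposition~\ref{p_slices}.

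\emph{Reduction.} It suffices to construct a positive measure $\mu\in\plh(\spd)$ with $\mu\bot\si_d$ whose slice measures satisfy $\mu_\za\bot\si_1^\za$ for \emph{every} $\za\in\ppd$. Indeed, for a positive pluriharmonic measure the slices $\mu_\za$ are defined for all $\za$ and each is nonzero, since $\|\mu_\za\|=P[\mu_\za](0)=P[\mu](0)=\|\mu\|$; moreover, by Lemma~\ref{l_sgl} the singularity $\mu_\za\bot\si_1^\za$ automatically holds for $\wsd$-a.e.\ $\za$, so only its validity for every $\za$ has to be arranged. Given such a $\mu$, write $P[\mu]=\Rl\,G$ with $G\in\hol(\bd)$ and $\mathrm{Im}\,G(0)=0$, and put $I:=e^{-G}$. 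Then $|I|=e^{-P[\mu]}\le 1$; since $P[\mu]$ is the invariant Poisson integral of a positive singular measure, its radial limit is $0$ at $\si_d$-a.e.\ point of $\spd$, so $|I|=1$ $\si_d$-a.e.\ and $I$ is inner (it is non-constant because $\mu\neq 0$). By Proposition~\ref{p_slices}, for every $\za$ the slice $\bigl(P[\mu]\bigr)_\za$ equals the one-dimensional Poisson integral $P[\mu_\za]$; hence $G_\za$ is holomorphic on $\Dbb$ with $\Rl\,G_\za=P[\mu_\za]$, so $I_\za=e^{-G_\za}$ is exactly the classical singular inner function of $\Dbb$ determined by the nonzero singular measure $\mu_\za$. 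Thus $I_\za$ is inner for every $\za\in\spd$, as required.

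\emph{Construction.} It remains to build such a $\mu$ — equivalently, to construct the inner function $I$ directly as a normal limit of functions $I_n\in H^\infty(\bd)$ with $\|I_n\|_\infty\le 1$ such that $1-|I_n|$ is small on $\spd$ and, crucially, $1-|(I_n)_\za|$ is small on $\Tbb$ \emph{uniformly in} $\za\in\ppd$. Such an iteration follows the familiar pattern by which (singular) inner functions on $\bd$ are constructed: at each step one multiplies by a carefully chosen factor — built from homogeneous polynomials of large boundary $L^2$-norm (Ryll--Wojtaszczyk polynomials) and their modifications — that pushes $|I_n|$ towards $1$ on a larger and larger part of the sphere while keeping the normal limit under control. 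Now the slice functions $(I_n)_\za$ depend continuously, indeed holomorphically, on $\za$, and $\ppd$ is compact; hence, once the bound on the ``deficiency'' $\int_{\Tbb}\bigl(1-|(I_n)_\za(\lad)|^2\bigr)\,d\si_1(\lad)$ produced at each step is made uniform in $\za$, passing to the limit yields $|I_\za|=1$ $\si_1$-a.e.\ for every $\za$, i.e.\ $I_\za$ inner for every $\za$; this upgrades the $\wsd$-a.e.\ conclusion of Lemma~\ref{l_sgl} to a conclusion valid for every complex line through the origin.

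\emph{Main obstacle.} The whole difficulty is concentrated in that last point. The classical construction of inner functions of the ball is naturally carried out slice-by-slice and only guarantees good behaviour of $I_\za$ for $\wsd$-a.e.\ $\za$; here one must instead make every step of the iteration \emph{quantitative and uniform over the entire family of complex lines through the origin} before taking the limit, which forces one to choose the building blocks and the truncation parameters with estimates independent of $\za$. Carrying this out is the content of Dupain's theorem; see \cite{Dup88}, and \cite{DJFA2k} for an alternative approach.
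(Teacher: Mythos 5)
This theorem is stated in the paper purely as a citation of Dupain \cite{Dup88} (with \cite{DJFA2k} as an alternative route); the paper contains no proof of it, so there is nothing internal to compare your argument against. Your reduction is sound and is essentially the strategy of \cite{DJFA2k}: produce a positive singular $\mu\in\plh(\spd)$ all of whose slice measures are nonzero and singular, and take $I=e^{-G}$ with $\Rl G=P[\mu]$. The bookkeeping there is correct --- $\|\mu_\za\|=P[\mu](0)>0$ rules out constant slices, and $|I_\za|=e^{-P[\mu_\za]}$ shows $I_\za$ is inner exactly when $\mu_\za\bot\si_1^\za$.

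However, as a proof the proposal has a genuine gap: the entire construction is deferred back to the theorem being proved. Your ``Construction'' paragraph gestures at an iteration with Ryll--Wojtaszczyk-type building blocks and asserts that the per-step deficiency bound can be ``made uniform in $\za$,'' but that uniformity is precisely the whole difficulty, and your own last paragraph concedes that carrying it out \emph{is} Dupain's theorem. Worse, the building blocks you name cannot by themselves deliver the uniformity: for a homogeneous polynomial $W$ of degree $n$ one has $\int_{\Tbb}|W(\lad\za)|^2\,d\si_1(\lad)=|W(\za)|^2$, so a uniform-in-$\za$ lower bound on the slice $L^2$-norms would force $|W|\ge\delta$ everywhere on $\spd$, which is impossible since a nonconstant homogeneous polynomial in $d\ge2$ variables vanishes somewhere on the sphere. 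The Ryll--Wojtaszczyk estimate $\|W\|_{L^2(\si_d)}\ge\delta$ only controls a set of positive measure of slices, which is exactly why the $\wsd$-a.e.\ statement is easy and the ``for every $\za$'' statement requires the substantially more delicate construction of \cite{Dup88} or \cite{DJFA2k}. So the proposal is an accurate framing of the problem, not a proof of it.
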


Further pursuing the above idea and having in mind Lemma~\ref{l_tot_sng}, we naturally arrive at
a hypothetical function $I$
such that $I$ is, in an appropriate sense, inner with respect to any Henkin measure.
Indeed, given an $f\in H^\infty(\bd)$ and a Henkin measure $\mu\in M(\spd)$,
there exists $f[\mu] \in L^\infty(|\mu|)$ such that
\[
\lim_{r\to 1-} \int_{\spd} f_r(\za) g(\za)\, d|\mu|(\za) = \int_{\spd} f[\mu](\za) g(\za)\, d|\mu|(\za)
\]
for any $g\in L^1(|\mu|)$; see \cite[Sect.~11.3.1]{Ru80}.
In other words, $\lim_{r\to 1-} f_r = f[\mu]$ in $\sigma(L^\infty(|\mu|), L^1(|\mu|))$-topology.
So, we have the following problem:

\begin{prb}[{see \cite[Problem~1]{Aab86}}]\label{p_tot_in}
Let $d\ge 2$. Does there exist an inner function $I: \bd \to \Dbb$
such that $|I[\mu]| =1$ $|\mu|$-a.e.\ for any Henkin measure $\mu\in M(\spd)$?
\end{prb}

\begin{rem}
If we replace the set of Henkin measures by $M_0(\spd)$ in the above problem, then
we obtain exactly the same question.
Indeed, assume that $I: \bd \to \Dbb$ is a holomorphic function such that
$|I[\rho]| =1$ $\rho$-a.e.\ for any $\rho\in M_0(\spd)$.
Let $\mu$ be a Henkin measure. By the Cole--Range theorem, $\mu\ll\rho_0$
for some representing measure $\rho_0\in M_0(\spd)$.
Therefore, $|I[\mu]|=1$ $|\mu|$-a.e.
%In other words, $I$ has the property described in Problem~\ref{p_tot_in}.
\end{rem}

However, any inner function $I$ is far from having the property required in Problem~\ref{p_tot_in};
see Corollary~\ref{c_wlim_inner_M0}.
For the sake of completeness, the corresponding arguments are presented in the next subsection.

\subsection{Inner functions with additional properties: proofs}
The main result of this subsection is the following theorem:

\begin{theorem}\label{t_wlim_M0}
Let $f\in H^\infty(\bd)$, $d\ge 2$.
Then there exists a set $\Lambda\subset \Cbb$ such that $\Lambda$ is dense in $f(\bd)$ and the following property holds:
For any $\ant\in\Lambda$, there exists a representing measure $\rho\in \cup_{z\in \bd} M_z(\spd)$ such that
$\lim_{r\to 1-} f_r =\ant$ in $\sigma(L^\infty(\rho), L^1(\rho))$-topology.
\end{theorem}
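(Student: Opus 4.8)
The plan is to exhibit, for a dense set of values $a\in f(\bd)$, a proper analytic disk $\psi\colon\overline{\Dbb}\to\overline{\bd}$ --- holomorphic on $\Dbb$, with $\psi(\Dbb)\subset\bd$ and $\psi(\Tbb)\subset\spd$ --- such that $f\circ\psi\equiv a$; equivalently, $\psi$ maps $\Dbb$ into the level set $f^{-1}(a)$ and attaches to the sphere along $\Tbb$. (We may assume $f$ nonconstant, the constant case being trivial with $\Lambda=f(\bd)$ a single point and, say, $\rho=\si_d$.) Granting such a disk for a given $a$, the measure $\rho:=\psi_{*}(\si_1)$ --- the push-forward of normalized Lebesgue measure on $\Tbb$ --- will be the required one: if $G\in A(\bd)$ then $G\circ\psi\in A(\Dbb)$, so $\int_{\spd}G\,d\rho=(G\circ\psi)(0)=G(\psi(0))$, hence $\rho\in M_{\psi(0)}(\spd)$; in particular $\rho$ is a Henkin measure and $f[\rho]$ is well defined.

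To see that $f[\rho]=a$, put $h_r(\lad):=f(r\psi(\lad))$ for $\lad\in\overline{\Dbb}$ and $0<r<1$. Each $h_r$ lies in $A(\Dbb)$, $\|h_r\|_{\infty}\le\|f\|_{\infty}$, and $h_r(\lad)\to f(\psi(\lad))=a$ for every $\lad\in\Dbb$; by Montel's theorem the convergence is locally uniform on $\Dbb$, so the Taylor coefficients of $h_r$ at the origin converge to those of the constant $a$, whence $h_r\to a$ in $\sigma(L^\infty(\Tbb),L^1(\Tbb))$. Transporting this through $\psi$: for $g\in L^1(\rho)$ one has $g\circ\psi\in L^1(\Tbb)$ and
\[
\int_{\spd}f_r\,g\,d\rho=\int_{\Tbb}h_r\,(g\circ\psi)\,d\si_1\longrightarrow a\int_{\Tbb}(g\circ\psi)\,d\si_1=a\int_{\spd}g\,d\rho\qquad(r\to1-),
\]
that is, $\lim_{r\to1-}f_r=a$ in $\sigma(L^\infty(\rho),L^1(\rho))$, as required.

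It remains to construct the disks, and this is where the real work lies. By Sard's theorem the critical values of $f$ form a set of planar measure zero, so the regular values lying in the open set $f(\bd)$ are dense in $f(\bd)$; it therefore suffices to treat a regular value $a$. Then $V_a:=f^{-1}(a)$ is a closed complex submanifold of $\bd$ of dimension $d-1\ge1$, hence a Stein manifold, and it is not relatively compact in $\bd$ (a compact complex subvariety of $\bd$ is finite, by the maximum principle for the coordinate functions, while $\dim V_a\ge1$), so $\overline{V_a}\cap\spd\neq\emptyset$. Fixing $z_0\in V_a$, one must run a holomorphic disk $\psi\colon\Dbb\to V_a$ through $z_0$ that extends continuously to $\overline{\Dbb}$ with $\psi(\Tbb)\subset\spd$. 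This is, I expect, the main obstacle: a Riemann--Hilbert--type boundary value problem for analytic disks in the Stein manifold $V_a$, with boundary on the real curve $V_a\cap\spd$ (totally real in $V_a$), to be solved by the standard machinery for producing boundary-regular analytic disks (in the spirit of the work of Forstneri\v{c} and Globevnik). Should $V_a$ be too thin near $\spd$ to carry such a disk through $z_0$ for some regular value, one perturbs $a$ inside the dense set of regular values, or first passes to a generic complex $2$-plane section (where $V_a$ becomes a closed complex curve and representing measures still push forward onto $\spd$). Collecting the construction over the regular values so realized yields a set $\Lambda$ that is dense in $f(\bd)$ and has the stated property.
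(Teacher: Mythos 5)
Your overall skeleton --- Sard's theorem to produce a dense set $\Lambda$ of regular values, the level set $V_a=f^{-1}(a)$ as a complex submanifold, a push-forward of $\si_1$ as the representing measure, and a weak-star transport argument identifying $\lim_{r\to1-}f_r$ with the constant $a$ --- is exactly the paper's, and the transport argument itself is sound. The genuine gap is the step you yourself flag as ``the main obstacle'': the existence, through a given point of $V_a$, of an analytic disk $\psi$ that is \emph{continuous on} $\overline{\Dbb}$, maps $\Dbb$ into $V_a$ and $\Tbb$ into $\spd$. No standard machinery produces such a boundary-regular disk inside a \emph{prescribed} level set: the Forstneri\v{c}--Globevnik constructions build proper disks in $\bd$ or attach disks to smooth totally real manifolds of maximal dimension, whereas $V_a\cap\spd$ can be an extremely irregular (even empty-interior, non-rectifiable) set, and there is no reason for $V_a$ to carry any disk with continuous boundary values on the sphere. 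Since the entire theorem rests on this construction, the proof as written is incomplete; the fallback suggestions (perturb $a$, take plane sections) do not remove the difficulty, because the same boundary-regularity problem recurs for every regular value.

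The paper circumvents this by dropping boundary regularity altogether. For $d=2$ it takes a connected component $\Mfld$ of $\{f=a\}$ and a universal covering map $\Phi\colon\Dbb\to\Mfld$. Being a bounded holomorphic map into $\Cbb^2$, $\Phi$ has radial limits $\si_1$-a.e.\ on $\Tbb$, and the path-lifting property of coverings forces every existing radial limit to lie on $\sptwo$: if $\Phi(r\xi)$ converged to a point of $\Mfld$, the path would lift to a path converging in $\Dbb$, contradicting $r\xi\to\xi\in\Tbb$ (this is the paper's Lemma~\ref{l_inner_c2}). The push-forwards of the Poisson measures $P(\lad,\cdot)\si_1$ under the a.e.-defined boundary map $\Phi|_{\Tbb}$ are then representing measures for the points $\Phi(\lad)$, and the identification $f[\rho]=a$ follows either by your weak-star transport computation or by the paper's Lemma~\ref{l_linf_Phi}. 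So the repair is to replace your boundary-regular disk by the universal covering map and its radial limits; no attached disk is needed. Note also that your reduction of $d>2$ to $d=2$ via ``generic $2$-plane sections'' should be made precise (restrict $f$ to two-dimensional linear slices through a dense set of preimages; representing measures on the slice spheres are representing measures on $\spd$).
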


If $I$ is an inner function in $\bd$, $d\ge 2$,
then $I(\bd)$ is known to be dense in $\Dbb$.
So, we also have the following corollary:

\begin{corollary}\label{c_wlim_inner_M0}
Let $I$ be an inner function in the unit ball $\bd$, $d\ge 2$.
Then there exists a set $\Lambda\subset \Dbb$ such that $\Lambda$ is dense in $\Dbb$ and the following property holds:
For any $\ant\in\Lambda$, there exists a representing measure $\rho\in \cup_{z\in \bd} M_z(\spd)$ such that
$I[\rho] =\ant$ $\rho$-a.e.
\end{corollary}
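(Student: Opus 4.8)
Since this corollary is the case $f=I$ of Theorem~\ref{t_wlim_M0}, once that theorem is in hand the deduction is immediate: $I(\bd)$ is dense in $\Dbb$, so the set $\Lambda$ furnished by Theorem~\ref{t_wlim_M0} is dense in $\Dbb$, and for $\ant\in\Lambda$ the assertion ``$I_r\to\ant$ in the $\sigma(L^\infty(\rho),L^1(\rho))$-topology'' says exactly that $I[\rho]=\ant$ $\rho$-a.e. (In particular this answers Problem~\ref{p_tot_in} negatively: every $\rho\in M_z(\spd)$ is a Henkin measure --- if $g_j\in A(\bd)$ is bounded with $g_j\to 0$ pointwise in $\bd$, then $\int g_j\,d\rho=g_j(z)\to0$ --- yet $|I[\rho]|=|\ant|<1$.) So the substance is Theorem~\ref{t_wlim_M0}, which I would prove as follows. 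First, the value $\ant$ is forced: if $\rho\in M_z(\spd)$ and $g\in H^\infty(\bd)$, then $g_r\in A(\bd)$ for $0<r<1$, so $\int_\spd g_r\,d\rho=g(rz)$; since $\|g_r\|_{L^\infty(\rho)}\le\|g\|_\infty$, letting $r\to1-$ gives $\int_\spd g[\rho]\,d\rho=g(z)$, whence a $\rho$-a.e.\ constant value of $f[\rho]$ must equal $f(z)$. Thus it suffices to produce a dense set $W\subset\bd$ such that every $w\in W$ carries a measure $\rho\in M_w(\spd)$ with $f[\rho]\equiv f(w)$; then $\Lambda:=f(W)$ is dense in $f(\bd)$ by continuity of $f$. (If $f$ is constant there is nothing to prove; otherwise one may take $W=\{z\in\bd:\, df(z)\neq 0\}$, a dense open set.)

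The measures $\rho$ will come from analytic disks inside level sets of $f$. Suppose that, for a given $w\in W$, there is a holomorphic $\Phi:\Dbb\to\bd$ with $\Phi(\lambda_0)=w$, with $f\circ\Phi\equiv f(w)$, and with $|\Phi^*(\zeta)|=1$ for $\si_1$-a.e.\ $\zeta\in\Tbb$ (an \emph{inner map}; $\Phi^*$ denotes radial boundary values). Put $\rho:=\Phi_*\nu$, the push-forward under $\Phi^*$ of the Poisson measure $\nu=P(\lambda_0,\cdot)\,\si_1$ of $\Dbb$ at $\lambda_0$. For $g\in A(\bd)$, $g\circ\Phi\in H^\infty(\Dbb)$ has boundary values $g\circ\Phi^*$, so $\int_\spd g\,d\rho=\int_\Tbb (g\circ\Phi^*)\,d\nu=(g\circ\Phi)(\lambda_0)=g(w)$; hence $\rho\in M_w(\spd)$. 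To see $f[\rho]\equiv f(w)$, note that $g_r:=f_r\circ\Phi$ is holomorphic on $\Dbb$ with $\|g_r\|_\infty\le\|f\|_\infty$ and $g_r(\lambda)=f(r\Phi(\lambda))\to f(\Phi(\lambda))=f(w)$ for every $\lambda\in\Dbb$; by Vitali's theorem this convergence is locally uniform on $\Dbb$, so the boundary functions $g_r^*=f_r\circ\Phi^*$ converge to $f(w)$ in the $\sigma(L^\infty(\si_1),L^1(\si_1))$-topology (test against Poisson kernels $P(\lambda,\cdot)$, whose linear span is dense in $L^1(\Tbb)$). Since $\rho=\Phi_*\nu$ with $\nu$ absolutely continuous with respect to $\si_1$, it follows that $\int_\spd f_r h\,d\rho\to f(w)\int_\spd h\,d\rho$ for all $h\in L^1(\rho)$, i.e.\ $f[\rho]=f(w)$. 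Note that no Lindel\"of-type statement is needed, since only interior values $f(r\Phi(\lambda))$ enter.

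It remains to construct, for each $w$ in a dense subset of $\bd$, an inner map $\Phi$ through $w$ with $f\circ\Phi$ constant; equivalently, $\Phi(\Dbb)$ must lie in the level variety $V:=f^{-1}(f(w))$, which for $w$ a regular point of $f$ is a closed complex submanifold of $\bd$ of dimension $d-1\ge1$ through $w$. After intersecting with a generic affine complex plane through $w$ (when $d\ge3$), one reduces to a connected one-dimensional closed complex submanifold $C\subset\bd$ containing $w$: a hyperbolic Riemann surface properly embedded in $\bd$. The required $\Phi$ is a holomorphic map $\Dbb\to C$ whose radial limits reach the ideal boundary of $C$; because $C$ is closed in $\bd$, such a $\Phi$ is automatically an inner map ($|\Phi(\lambda)|\to1$ whenever $\lambda\to\partial\Dbb$ along a.e.\ radius). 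When $C$ is simply connected one simply takes a Riemann uniformization $\Dbb\to C$; in general one builds $\Phi$ from a bounded harmonic function on $C$ taking, a.e., the appropriate discrete set of ideal-boundary values, together with a single-valued harmonic conjugate --- exactly as in the model case $f(z_1,z_2)=z_1z_2$ on $B_2$, where for $0<|\ant|<\tfrac12$ the set $V=\{z_1z_2=\ant\}\cap B_2$ is, in the coordinate $z_1$, a round annulus $\{r_1<|z_1|<r_2\}$, one chooses $\phi:\Dbb\to\{r_1<|w|<r_2\}$ with $\phi(0)=w_1$ and $|\phi^*|\in\{r_1,r_2\}$ a.e., and sets $\Phi(\lambda)=(\phi(\lambda),\ant/\phi(\lambda))$, so that $f\circ\Phi\equiv\ant$, $\Phi(0)=w$, and $|\Phi^*|^2=|\phi^*|^2+|\ant|^2/|\phi^*|^2=1$ a.e.

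The main obstacle is precisely this last step: one must control the global structure of the level variety near $\spd$ well enough to exhibit an inner disk through a dense set of points $w$. A level variety need not extend continuously to $\overline\bd$ and may a priori be wildly embedded near the sphere, so some care --- an appeal to known constructions of proper and inner holomorphic maps into complex subvarieties of the ball, or, in the spirit of the constructions of inner functions, a $\overline\partial$-correction starting from an inner map $\Phi_0$ with $f\circ\Phi_0$ merely close to a prescribed value --- is required to carry this through. The remaining bookkeeping (density of $W$, hence of $\Lambda=f(W)$ in $f(\bd)$, and the deduction of the corollary above) is then routine.
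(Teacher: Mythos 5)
Your deduction of the corollary from Theorem~\ref{t_wlim_M0} is exactly the paper's: $I(\bd)$ is dense in $\Dbb$ for inner $I$, and ``$\lim_{r\to1-}I_r=\ant$ in $\sigma(L^\infty(\rho),L^1(\rho))$'' is precisely the statement $I[\rho]=\ant$ $\rho$-a.e.; your remark that every representing measure is a Henkin measure, so that the corollary settles Problem~\ref{p_tot_in} negatively, is also correct. The substance, however, is Theorem~\ref{t_wlim_M0}, and your sketch of it has a genuine gap at exactly the point you yourself flag as ``the main obstacle'': you never actually produce, through a dense set of points $w$, a holomorphic map $\Phi:\Dbb\to\bd$ with image in the level variety of $f$ and with $|\Phi^*|=1$ a.e.\ on $\Tbb$. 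The routes you propose for closing it (controlling the embedding of the level variety near $\spd$, prescribing ideal-boundary values of harmonic functions on a multiply connected $C$, a $\overline\partial$-correction in the style of the inner-function constructions) are not carried out and are far harder than what is needed.

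The paper closes this gap with one clean observation. Reduce to $d=2$; by Sard's theorem take $\ant=f(w)$ a regular value and let $\Mfld$ be the connected component of $\{f=\ant\}$ through $w$, a one-dimensional \emph{closed} complex submanifold of $B_2$ which is hyperbolic because it carries nonconstant bounded holomorphic functions; then let $\Phi:\Dbb\to\Mfld$ be the \emph{universal covering map}. Lemma~\ref{l_inner_c2} shows that whenever the radial limit $\Phi(\xi)$ exists it lies on $\sptwo$: the limit is a limit of points of the closed set $\Mfld$, so if it were not on the sphere it would be a point $p\in\Mfld$, and the ray $r\xi$, $r\to1-$, eventually trapped in a single sheet over an evenly covered neighborhood of $p$, would converge to the preimage of $p$ in that sheet, an interior point of $\Dbb$ --- absurd. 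Hence $|\Phi^*|=1$ a.e.\ automatically, no matter how wildly $\Mfld$ sits near the sphere, and no uniformization of the level curve itself, no harmonic conjugates, and no $\overline\partial$-machinery are required. From there the paper proceeds exactly as you do: push forward the Poisson measures $P(\lad,\cdot)\,\si_1$ to obtain representing measures $\mPhi^\lad\in M_{\Phi(\lad)}(\sptwo)$, all mutually absolutely continuous, and combine $\int f_r\,d\mPhi^\lad=f(r\Phi(\lad))\to\ant$ with Lemma~\ref{l_linf_Phi} to get $f[\mPhi]=\ant$ a.e. So your framework is the right one, but the decisive idea --- that a covering map of the level variety is automatically ``inner'' --- is missing, and without it the argument is incomplete.
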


Clearly, the above corollary guarantees that the answer to Problem~\ref{p_tot_in} is by far negative.

The rest of the present subsection is devoted to the proof of Theorem~\ref{t_wlim_M0}.
Observe that it suffices to prove Theorem~\ref{t_wlim_M0} for $d=2$.
So, for an appropriate point $\ant\in f(B_2)$, we will consider the connected component $\Mfld$
of the set $\{w\in B_2: f(w)=\ant\}$.
Namely, applying Sard's theorem, we select $\ant$ such
that $\Mfld$ is a one-dimensional complex submanifold of the ball $B_2$.

Before giving a proof of Theorem~\ref{t_wlim_M0},
we obtain auxiliary Lemmas~\ref{l_inner_c2} and \ref{l_linf_Phi} below in a more
general setting, assuming that $\Mfld$ is an arbitrary connected one-dimensional complex submanifold of $B_2$.
Observe that the topological boundary $\partial \Mfld$ of $\Mfld$ is a subset of the sphere $\sptwo$.
Next, let $\Phi: U\to \Mfld$ be a universal covering map, where $U$ is a simply connected complex manifold.
Since there exists a non-constant bounded holomorphic function on $U$ (in particular, $\Phi$ has this property),
the simply connected Riemann surface $U$ is hyperbolic.
Therefore, without loss of generality, we may suppose that
$U=\Dbb$; see, for example, \cite[Sect.~9-1]{Sp57}.

So, in what follows, we consider a universal covering map
$\Phi: \Dbb\to \Mfld$. For $\xi\in \Tbb$, let $\Phi(\xi)$ denote $\lim_{r\to 1-} \Phi(r\xi)$ if this limit exists.

\begin{lemma}\label{l_inner_c2}
Let $\Phi: \Dbb\to \Mfld$ be a universal covering map.
If $\Phi(\xi)$ is defined for certain $\xi\in \Tbb$, then $\Phi(\xi) \in \sptwo$.
In particular, $|\Phi(\xi)|=1$ for $\si_1$-a.e.\ $\xi\in \Tbb$.
\end{lemma}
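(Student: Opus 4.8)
The statement is a Fatou-type boundary result for a universal covering map $\Phi:\Dbb\to\Mfld$, where $\Mfld$ is a connected one-dimensional complex submanifold of $\sptwo$'s ball with topological boundary $\partial\Mfld\subset\sptwo$. The plan is to show that whenever the radial limit $\Phi(\xi)$ exists for some $\xi\in\Tbb$, the limit point lands on the sphere $\sptwo=\partial B_2$; the second assertion is then automatic, since a bounded holomorphic map (each coordinate of $\Phi$) has radial limits $\si_1$-a.e., and once $\Phi(\xi)$ exists it must lie on $\sptwo$.

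The key observation is that $\Phi$ is a \emph{proper} map onto $\Mfld$ in the following sense: if a sequence $\lad_n\in\Dbb$ has $|\lad_n|\to1$, then $\Phi(\lad_n)$ cannot accumulate at an interior point of $\Mfld$. Indeed, suppose $\Phi(\lad_n)\to p$ with $p\in\Mfld$ (an interior point). Pick a small coordinate disk $V\subset\Mfld$ around $p$ that is evenly covered by $\Phi$, so $\Phi^{-1}(V)$ is a disjoint union of open sets each mapped biholomorphically onto $V$. For large $n$ the points $\lad_n$ lie in $\Phi^{-1}(V)$, hence in these sheets; but since $|\lad_n|\to1$, infinitely many of them must lie in infinitely many distinct sheets (no single sheet, being relatively compact in $\Dbb$ as the biholomorphic preimage of the relatively compact $V\Subset\Mfld$, can contain a sequence tending to $\Tbb$). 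This forces the deck transformation group to have orbits accumulating at $\xi\in\Tbb$, which one can rule out, or — more directly — one argues as follows: the limit point $q:=\Phi(\xi)=\lim_{r\to1-}\Phi(r\xi)$ satisfies $q\in\overline\Mfld$, and $\overline\Mfld = \Mfld\cup\partial\Mfld$ with $\partial\Mfld\subset\sptwo$; so it suffices to exclude $q\in\Mfld$. If $q\in\Mfld$, take an evenly covered neighborhood $V\Subset\Mfld$ of $q$; then $r\mapsto\Phi(r\xi)$ eventually stays in $V$, so the curve $r\mapsto r\xi$ is eventually trapped inside $\Phi^{-1}(V)$, a union of relatively compact (in $\Dbb$) sheets. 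A connected curve tending to $\Tbb$ cannot be eventually contained in such a union, a contradiction. Hence $q\notin\Mfld$, so $q\in\partial\Mfld\subset\sptwo$, giving $|\Phi(\xi)|=1$.

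For the \textbf{in particular} clause: each component $\Phi_j$, $j=1,2$, of $\Phi$ is a bounded holomorphic function on $\Dbb$ (because $\Mfld\subset B_2$), so $\lim_{r\to1-}\Phi_j(r\xi)$ exists for $\si_1$-a.e.\ $\xi\in\Tbb$; consequently $\Phi(\xi)$ exists for $\si_1$-a.e.\ $\xi$, and the first part of the lemma forces $|\Phi(\xi)|=1$ there.

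The \textbf{main obstacle} is making rigorous the assertion that a curve $r\mapsto r\xi$ approaching $\Tbb$ cannot be eventually confined to $\Phi^{-1}(V)$ when $V\Subset\Mfld$ is evenly covered. The cleanest route is to note that $\overline V$ is a compact subset of $\Mfld$, hence $\Phi^{-1}(\overline V)$ is a closed subset of $\Dbb$ whose image under $\Phi$ misses $\partial\Mfld$; one must verify that each connected component (sheet) of $\Phi^{-1}(V)$ is relatively compact in $\Dbb$. This is where properness of the covering over relatively compact subsets of $\Mfld$ is used: a covering map is proper over any set whose closure is compact and contained in the base, so $\Phi^{-1}(\overline V)$ is compact in $\Dbb$, i.e.\ bounded away from $\Tbb$. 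Then the eventual containment of $\{r\xi: r\ \text{near}\ 1\}$ in $\Phi^{-1}(V)\subset\Phi^{-1}(\overline V)$ is absurd, since $|r\xi|\to1$. I would spell out this properness point carefully, as it is the only non-formal step; everything else is bookkeeping with covering-space language.
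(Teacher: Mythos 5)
Your overall strategy is the same as the paper's: if $\Phi(\xi)$ exists and does not lie on $\sptwo$, then it lies in $\overline{\Mfld}\setminus\partial\Mfld=\Mfld$, which is incompatible with $\Phi$ being a covering map; the a.e.\ statement then follows from Fatou's theorem applied to the two bounded coordinate functions of $\Phi$. The paper disposes of the contradiction in one line, and the detail you supply --- the tail $\{r\xi: r_0\le r<1\}$ is connected, hence lies in a \emph{single} sheet over an evenly covered neighbourhood $V$ of $q=\Phi(\xi)$, and that sheet is relatively compact in $\Dbb$, contradicting $|r\xi|\to1$ --- is the right way to justify it.

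However, the justification you single out in your last paragraph as ``the cleanest route'' is false: a covering map is proper over compact subsets of the base only when it is finite-sheeted, and here $\Phi$ is a universal covering of a Riemann surface whose fundamental group may well be infinite (e.g.\ if $\Mfld$ is an annulus), so $\Phi^{-1}(\overline V)$ is in general a non-compact disjoint union of infinitely many compact pieces (compare the exponential covering of the punctured disk by a half-plane). What is true, and all your argument actually needs, is that each \emph{individual} sheet is relatively compact in $\Dbb$. Even that does not follow merely from the sheet being biholomorphic to a relatively compact $V$; the clean way to get it is to choose $V$ with $\overline V$ contained in a larger evenly covered $V'$ whose closure is compact in $\Mfld$: the closure of a sheet of $\Phi^{-1}(V)$ inside the corresponding sheet of $\Phi^{-1}(V')$ is then the homeomorphic preimage of $\overline V$, hence compact, and coincides with its closure in $\Dbb$. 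With that correction, the connectedness of the radial tail (forcing it into one sheet) finishes the proof; drop the properness claim entirely.
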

\begin{proof}
Suppose that $\xi\in \Tbb$, $\Phi(\xi)$ is defined and $\Phi(\xi) \notin \sptwo$.
Then $\Phi(\xi) \in \Mfld$; hence, $\Phi$ is not a covering map.
This contradiction proves the claim.

Since $\Phi(\xi)$ is defined for $\si_1$-a.e.\ $\xi\in \Tbb$, the proof is finished.
\end{proof}

Put
\[
\widetilde{\Tbb} = \{\xi\in\Tbb: \lim_{r\to 1-} \Phi(r\xi)\ \textrm{exists}\}.
\]
Let $\mu$ be a complex Borel measure on $\Tbb$ such that $|\mu|(\Tbb\setminus \widetilde{\Tbb}) = 0$.
Given a Borel set $E\subset\sptwo$, define
\[
\mu_{\Phi}(E) = \mu(\Phi^{-1} (E)).
\]
So, $\mu_\Phi$ is a measure supported by $\partial \Mfld \subset \sptwo$.
We have
\begin{equation}\label{e_lift_Phi}
\int_{\sptwo} h(\za)\, d\mu_\Phi (\za) = \int_{\Tbb} h(\Phi(\xi))\, d\mu(\xi)
\end{equation}
for $h = \chi_E$, where $\chi_E$ denotes the characteristic function of a Borel set $E\subset\sptwo$.
Therefore, \eqref{e_lift_Phi} holds for any bounded Borel function $h$ on $\sptwo$.

Lemma~\ref{l_inner_c2} guarantees that $\si_1(\Tbb\setminus \widetilde{\Tbb}) = 0$.
So, applying the above procedure, put
\begin{equation}\label{e_Phiz_def}
\mPhi^\lad = \left( P(\lad, \cdot)\si_1 \right)_{\Phi}, \quad \lad\in \Dbb,
\end{equation}
where
\[
P(\lad, \xi) =\frac{1-|\lad|^2}{|\lad-\xi|^2}, \quad \lad\in\Dbb,\ \xi\in\Tbb,
\]
is the one-dimensional Poisson kernel.

By \eqref{e_lift_Phi}, we have
\begin{equation}\label{e_Phi_repr}
f(\Phi(\lad)) = \int_{\sptwo} f(\za)\, d\mPhi^\lad(\za), \quad \lad\in \Dbb,
\end{equation}
for all $f\in A(B_2)$.
Hence, $\mPhi^\lad$ is a representing measure, namely, $\mPhi^\lad \in M_{\Phi(\lad)}(\spd)$, $\lad\in\Dbb$.

Put $\mPhi= \mPhi^0$.
It is clear that $L^\infty (\mPhi^\lad)= L^\infty (\mPhi)$ for all $\lad\in \Dbb$.

\begin{lemma}\label{l_linf_Phi}
Let $h\in L^\infty (\mPhi)$ and $\ant\in \Cbb$.
Assume that
\[
\int_{\sptwo} h(\za)\, d\mPhi^\lad (\za) =\ant
\]
for all $\lad\in \Dbb$. Then $h=\ant$ $\mPhi$-a.e.\ on $\sptwo$.
\end{lemma}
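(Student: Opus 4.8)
The plan is to pull the problem back to the disk via the covering map $\Phi$ and reduce it to the uniqueness theorem for bounded harmonic functions (equivalently, for Poisson integrals of $L^\infty$ densities). First I would use \eqref{e_lift_Phi}, which is valid for every bounded Borel function on $\sptwo$; applying it with the bounded Borel function $h$ and with $\mu = P(\lad,\cdot)\si_1$, and recalling the definition \eqref{e_Phiz_def} of $\mPhi^\lad$, the hypothesis becomes
\[
\int_{\Tbb} h(\Phi(\xi))\, P(\lad,\xi)\, d\si_1(\xi) = \ant \quad \textrm{for all } \lad\in\Dbb.
\]
Set $g(\xi) = h(\Phi(\xi))$ for $\xi\in\widetilde{\Tbb}$; since $\si_1(\Tbb\setminus\widetilde{\Tbb}) = 0$ by Lemma~\ref{l_inner_c2} and $h\in L^\infty(\mPhi)$, the function $g$ is a well-defined element of $L^\infty(\si_1)$ (indeed $g = h\circ\Phi$ and $\Phi_*(\si_1)$ is mutually absolutely continuous with the right notion of null sets for $\mPhi$, which is exactly how $L^\infty(\mPhi)$ was defined). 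The displayed identity says that the Poisson integral $P[g\,\si_1]$ is the constant function $\ant$ on $\Dbb$.

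Next I would invoke the classical uniqueness property on the disk: if the Poisson integral of an $L^1(\Tbb)$ function equals a constant $\ant$ throughout $\Dbb$, then that function equals $\ant$ $\si_1$-a.e.\ (take Fourier coefficients, or note that radial limits of $P[g\,\si_1]$ recover $g$ a.e.). Hence $g = \ant$ $\si_1$-a.e.\ on $\Tbb$, that is, $h(\Phi(\xi)) = \ant$ for $\si_1$-a.e.\ $\xi\in\widetilde\Tbb$. It remains to transfer this back to a statement $\mPhi$-a.e.\ on $\sptwo$: by construction $\mPhi = \mPhi^0 = (\si_1)_\Phi$, so for the Borel set $E = \{\za\in\sptwo:\, h(\za)\neq \ant\}$ we get $\mPhi(E) = \si_1(\Phi^{-1}(E)) = \si_1(\{\xi\in\widetilde\Tbb:\, h(\Phi(\xi))\neq\ant\}) = 0$. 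Therefore $h = \ant$ $\mPhi$-a.e., as claimed.

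The only genuinely delicate point is the bookkeeping around null sets and the measurability of $h\circ\Phi$: one must be sure that an $L^\infty(\mPhi)$ equivalence class can be composed with $\Phi$ to give a well-defined $L^\infty(\si_1)$ class, i.e.\ that $\Phi$ does not collapse a $\mPhi$-null set onto a set of positive $\si_1$-measure, and conversely. This is immediate here because $\mPhi$ was literally defined as the push-forward $(\si_1)_\Phi$, so $\mPhi(E) = 0$ iff $\si_1(\Phi^{-1}(E)) = 0$; I would state this correspondence explicitly and then the rest is the routine disk uniqueness theorem. No essential obstacle remains beyond this identification.
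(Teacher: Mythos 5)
Your argument is correct and is essentially the paper's own proof: both pull the hypothesis back via \eqref{e_lift_Phi} and \eqref{e_Phiz_def} to the identity $\int_{\Tbb} h(\Phi(\xi))P(\lad,\xi)\,d\si_1(\xi)=\ant$ for all $\lad\in\Dbb$, apply the uniqueness theorem for Poisson integrals on the disk to get $h\circ\Phi=\ant$ $\si_1$-a.e., and then use the push-forward definition of $\mPhi$ to conclude. Your extra care about the null-set correspondence between $\mPhi$ and $\si_1$ is a welcome elaboration of a step the paper treats as immediate.
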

\begin{proof}
By \eqref{e_lift_Phi} and \eqref{e_Phiz_def},
\[
\int_{\Tbb} h(\Phi(\xi)) P(\lad, \xi)\, d\si_1(\xi) = \int_{\sptwo} h(\za)\, d\mPhi^\lad(\za) =\ant
\]
for all $\lad\in\Dbb$.
Therefore, $h(\Phi(\xi)) = \ant$ for $\si_1$-a.e.\ $\xi\in\Tbb$.
So, we conclude that $h=\ant$ $\mPhi$-a.e.\ on $\sptwo$.
\end{proof}

Now, we are in position to prove the main result of the present subsection.

\begin{proof}[Proof of Theorem~\ref{t_wlim_M0}]
As mentioned above, it suffices to prove the theorem for $d=2$.
Also, without loss of generality, we may assume that $f$ is not a constant.

Let $E\subset B_2$ denote the set of critical points for $f$.
By Sard's theorem, $f(E)$ is a set of zero area %Lebesgue
measure.
Put $\Lambda = f(B_2)\setminus f(E)$.
We claim that $\Lambda$ has the required properties.

Indeed, fix a point $\ant\in\Lambda$.
Let $\Mfld$ denote a connected component
of the set $\{w\in B_2: f(w)=\ant\}$.
Since all points of $\Mfld$ are not critical for $f$,
$\Mfld$ is a one-dimensional complex submanifold of $B_2$.
So, applying the procedure described earlier in the present subsection,
define $\si_\Phi^\lad = \si_\Phi^\lad(\ant, \Mfld)$, $\lad\in \Dbb$, by \eqref{e_Phiz_def}.
Also, put $\mPhi= \mPhi^0$.

Since $\mPhi$ is a representing measure, there exists $h\in L^\infty(\mPhi)$ such that
$h = \lim_{r\to 1-} f_r$ in $\sigma(L^\infty(\mPhi), L^1(\mPhi))$-topology;
see \cite[Sect.~11.3.1]{Ru80}. For $\lad\in\Dbb$, the measure $\si_\Phi^\lad$
is absolutely continuous with respect to $\si_\Phi$.
Hence, using the defining property of $h$ and equality~\eqref{e_Phi_repr}, we obtain
\[
\int_{\sptwo} h\, d\mPhi^\lad = \lim_{r\to 1-} \int_{\sptwo} f_r\, d\mPhi^\lad
=\lim_{r\to 1-} f(r\Phi(\lad)) = f(\Phi(\lad))
=\ant
\]
for all $\lad\in \Dbb$.
Therefore, $h=\ant$ $\mPhi$-a.e.\ on $\sptwo$ by Lemma~\ref{l_linf_Phi}.
\end{proof}

\section{Two analogs of model spaces}\label{s_model}
For an inner function $\diin$ on $\Dbb$, the classical model space $K_\diin = K_\diin(\Dbb)$
is defined as $K_\diin = H^2(\Dbb) \ominus \diin H^2(\Dbb)$.
Given an inner function $I$ in $\bd$, $d\ge 2$, we consider the following analogs of the model space:
\begin{align*}
  \ksm &= \{f\in H^2:\, I\overline{f} \in H^2_0\};\\
  \kla &= H^2 \ominus I H^2,
\end{align*}
where $H^2= H^2(\bd)$. Clearly, $\ksm \subset \kla$.

Let $\al\in\Tbb$. In the present section, we construct a unitary operator
$U_\al$ from $\kla$ onto $L^2(\clk_\al)$; see Theorem~\ref{t_kla} below.
Next, in Section~\ref{ss_t11prf}, we prove Theorem~\ref{t_ksm_plh}, that is,
we characterize those $f\in L^2(\clk_\al)$ for which $U_\al^* f \in \ksm$.
So, as mentioned above, we use standard facts of the function theory in $\bd$ without explicit references.
In particular, we identify the Hardy space $H^p(\bd)$, $p>0$, and the space
$H^p(\spd)$ of the corresponding boundary values.

\subsection{A unitary operator from $\kla$ onto $L^2(\clk_\al)$}
Observe that
\[
K(z, w) \overset{\textrm{def}}{=} \frac{1-I(z)\overline{I(w)}}{(1-\langle z, w \rangle)^n}
= (1-I(z)\overline{I(w)}) C(z, w)
\]
is the reproducing kernel for $\kla$, that is, %$K(z,\cdot)\in \kla$ and
\[
g(z) = \int_{\spd} g(w) K(z, w) \, d\si_d(w), \quad z\in\bd,
\]
for all $g\in \kla$.
Indeed, $C(z, w)$ is the reproducing kernel for $H^2(\bd)$; hence,
$I(z) C(z, w) \overline{I(w)}$ is the reproducing kernel for $I H^2(\bd)$. Therefore,
the difference $C(z,w) - I(z) C(z, w) \overline{I(w)}$
is the reproducing kernel for $H^2(\bd) \ominus I H^2(\bd)$.

Put
$K_w(z) = K(z, w)$ and define
\[
(U_\al K_w)(\za) \overset{\textrm{def}}{=} \frac{1-\al \overline{I(w)}}{(1-\langle \za, w \rangle)^n}
= (1-\al \overline{I(w)}) C(\za, w), \quad \za\in\spd.
\]

\begin{theorem}\label{t_kla}
For each $\al\in\Tbb$, $U_\al$ has a unique extension
to a unitary operator from $\kla$ onto $L^2(\clk_\al)$.
\end{theorem}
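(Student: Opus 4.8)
The plan is to verify that $U_\al$, defined on the dense linear span of the kernels $\{K_w : w \in \bd\}$ in $\kla$, is an isometry into $L^2(\clk_\al)$, then extend by continuity and finally check surjectivity. The isometry part reduces to a single inner-product identity: for all $z, w \in \bd$,
\[
\langle U_\al K_w, U_\al K_z \rangle_{L^2(\clk_\al)} = \langle K_w, K_z \rangle_{\kla} = K(z,w).
\]
The left-hand side, by the definition of $U_\al K_w$, equals
\[
(1-\overline\al I(z))(1-\al\overline{I(w)}) \int_{\spd} C(\za, w)\overline{C(\za, z)}\, d\clk_\al(\za)
= (1-\overline\al I(z))(1-\al\overline{I(w)}) \int_{\spd} C(z,\za) C(\za, w)\, d\clk_\al(\za),
\]
using $\overline{C(\za,z)} = C(z,\za)$ since $\clk_\al$ is a positive (hence real) measure. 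Now Proposition~\ref{p_cauchy_dbl}, applied with $\ph = I$, evaluates the integral as
\[
\frac{1 - I(z)\overline{I(w)}}{(1-\overline\al I(z))(1-\al\overline{I(w)})}\, C(z,w),
\]
and the two prefactors cancel exactly, leaving $(1 - I(z)\overline{I(w)})C(z,w) = K(z,w)$. This is precisely the required identity, so $U_\al$ is a well-defined isometry on finite linear combinations of kernels, and since such combinations are dense in $\kla$ (the kernels $K_w$ span a dense subspace of the reproducing kernel Hilbert space $\kla$), $U_\al$ extends uniquely to an isometry $\kla \to L^2(\clk_\al)$.

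**Surjectivity.** It remains to show the range of $U_\al$ is all of $L^2(\clk_\al)$. The range is closed (being the isometric image of a Hilbert space), so it suffices to show it is dense. The range contains all functions $\za \mapsto (1-\al\overline{I(w)})C(\za, w)$, $w \in \bd$; since the scalar factors $1-\al\overline{I(w)}$ are nonzero (as $|I(w)| < 1$ for $w \in \bd$), the range contains every Cauchy kernel $C(\cdot, w)$, $w \in \bd$. Finite linear combinations of the $C(\cdot, w)$ are dense in $A(\bd)$ in the supremum norm — indeed, for $f \in A(\bd)$, the dilates $f_r(\za) = f(r\za)$ are norm-limits of such combinations (expand $f(rw)$ via the Cauchy integral against $C(r\za, w)$, or use that polynomials are uniformly approximable by Cauchy kernels), and $f_r \to f$ uniformly. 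Hence the range of $U_\al$ contains $A(\bd)$ as a subset of $L^2(\clk_\al)$, up to density. By Lemma~\ref{l_AB_dense}, $A(\bd)$ is dense in $L^2(\clk_\al)$; therefore the range of $U_\al$ is dense, hence all of $L^2(\clk_\al)$. This proves $U_\al$ is unitary.

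**Main obstacle.** The computational heart is the cancellation identity, and this is already delivered cleanly by Proposition~\ref{p_cauchy_dbl}; the only genuinely delicate point is the passage from "the range contains all Cauchy kernels $C(\cdot, w)$" to "the range is dense in $L^2(\clk_\al)$." One must be careful that density of $\operatorname{span}\{C(\cdot,w)\}$ in $A(\bd)$ is in the uniform norm, and that uniform convergence on $\overline{\bd}$ implies $L^2(\clk_\al)$-convergence (immediate since $\clk_\al$ is a finite positive measure); after that, Lemma~\ref{l_AB_dense} finishes the job. I would present the surjectivity argument via this two-step density chain $\operatorname{span}\{C(\cdot,w)\} \subset A(\bd) \subset L^2(\clk_\al)$ rather than attempting a direct orthogonality computation, which would again reduce to Proposition~\ref{p_cauchy_dbl} but with more bookkeeping.
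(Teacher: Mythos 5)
Your proposal is correct and follows essentially the same route as the paper: the isometry on the span of the kernels $K_w$ is exactly the computation via Proposition~\ref{p_cauchy_dbl}, and surjectivity is obtained from density of the Cauchy kernels together with Lemma~\ref{l_AB_dense}. The only difference is that you spell out the density chain $\operatorname{span}\{C(\cdot,w)\}\subset A(\bd)\subset L^2(\clk_\al)$, which the paper leaves as a one-line remark; this is a harmless (and welcome) elaboration, not a different argument.
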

\begin{proof}
Fix an $\al\in\Tbb$. Since $K(z, w)$ is the reproducing kernel function for $\kla$,
the linear span of the family $\{K_w\}_{w\in\bd}$ is dense in $\kla$.
Therefore, if the required extension exists, then it is unique.

  Now, we claim that $(U_\al K_w, U_\al K_z)_{L^2(\clk_\al)}=(K_w, K_z)_{H^2}$ for $z, w \in \bd$.
Indeed, applying Proposition~\ref{p_cauchy_dbl}, we obtain
  \begin{align*}
    (U_\al K_w, U_\al K_z)_{L^2(\clk_\al)}
    &=\int_{\spd} (1-\al \overline{I(w)}) C(\za, w) (1-\overline{\al} I(z)) C(z, \za)\, d\clk_\al(\za) \\
    &=(1-\al \overline{I(w)})(1-\overline{\al} I(z)) \int_{\spd} C(\za, w) C(z, \za)\, d\clk_\al(\za) \\
    &= (1- I(z) \overline{I(w)}) C(z, w)\\
    &= K(z,w) = (K_w, K_z)_{H^2}.
  \end{align*}
So, $U_\al$ extends to an isometric embedding of $\kla$ into $L^2(\clk_\al)$.
Hence, to finish the proof, it remains to observe that the linear span of the family $\{C(\za, z)\}_{z\in\bd}$
is dense in $L^2(\clk_\al)$ by Lemma~\ref{l_AB_dense}.
\end{proof}

\subsection{Image of $\ksm$}\label{ss_t11prf}
In this section, we prove Theorem~\ref{t_ksm_plh}.
Recall that $\mu_+$ denotes the Cauchy transform of a measure $\mu\in M(\spd)$
and
\[
\mu_+(z) + \mu_-(z) = P[\mu](z), \quad z\in \bd,
\]
for all $\mu\in \plh(\spd)$,
where $\mu_-$ is defined by \eqref{e_mum}.

We claim that
\begin{equation}\label{e_ual_conj}
\begin{aligned}
(U_\al^* f)(z)
    &= (1-\overline{\al}I(z)) \int_{\spd} C(z, \za) f(\za) \, d\clk_\al(\za) \\
    &  = (1-\overline{\al}I(z))(f\clk_\al)_+(z), \quad z\in\bd,
\end{aligned}
\end{equation}
for $f\in L^2(\clk_\al)$, $\al\in \Tbb$.
Indeed, the definition of $U_\al$ and Proposition~\ref{p_cauchy_dbl} imply the above equality
for $f(\za) = (1-\al \overline{I(w)}) C(\za, w)$ with $w\in\bd$.
By Lemma~\ref{l_AB_dense},
the linear span of the family
\[
\left\{(1-\al\overline{I(w)}) C(\za, w)\right\}_{w\in\bd}
\]
is dense in $L^2(\clk_\al)$. So, the claim is proved.

\begin{proof}[Proof of Theorem~\ref{t_ksm_plh}]
(ii)$\Rightarrow$(i)
Let $f\clk_\al\in\plh(\spd)$. Put $\overline{G} =- (f\clk_\al)_{-}$.
Then $G\in H^p_0$, $0<p<1$. The property $f\clk_\al\in\plh(\spd)$
guarantees that
\[
P[f\clk_\al](z) = (f\clk_\al)_{+}(z) -\overline{G}(z),\quad z\in\bd.
\]
%05
Now, we consider the corresponding functions on $\spd$.
Since $f\clk_\al$ is a singular measure, we have $(f\clk_\al)_{+}(\za) = \overline{G}(\za)$
for $\si_d$-a.e.\ $\za\in\spd$.
Therefore,
\eqref{e_ual_conj} and Theorem~\ref{t_kla} imply that
\[
(1-\overline{\al}I) \overline{G} = U_\al^* f \in H^2(\spd).
\]
%05
Hence, for $0<p<1$, we have
\[
I \overline{U_\al^* f} = I (1- \al\overline{I}) G = (I-\al)G \in L^2(\spd) \cap H^p_0(\spd) = H^2_0(\spd).
\]
So, (ii) implies (i).

(i)$\Rightarrow$(ii)
Let $F= U_\al^* f \in \ksm$.
For $\za\in\spd$, the slice function $F_\za$ is defined as $F_\za(z) = F(z\za)$, $z\in\Dbb$.
Observe that the following properties hold for $\si_d$-a.e.\ $\za\in\spd$:
\begin{itemize}
  \item $F_\za\in H^2 = H^2(\Dbb)$;
  \item $I_\za$ is an inner function in $\Dbb$;
  \item $F_\za\in (I_\za)_*(H^2) = (I_\za)_*(H^2(\Dbb))$.
\end{itemize}
In what follows, we assume that the point $\za$ under consideration satisfies the above conditions.

By \eqref{e_ual_conj}, we have $(1-\overline{\al} I(z))^{-1} F(z)\in H^p$, $0<p<1$.
By assumption, there exists $g\in H^2_0$ such that $F =I \overline{g}$.
Since $I$ is inner, we obtain
\[
\frac{\overline{g(\za)}}{\overline{I(\za)} -\overline{\al}}
=\frac{I(\za)\overline{g(\za)}}{1-\overline{\al} I(\za)}
=\frac{F(\za)}{1-\overline{\al} I(\za)}
\quad \textrm{for $\si_d$-a.e. } \za\in\spd.
\]
Observe that $({I} - {\al})^{-1} \in {H^p}$ for $0<p<1$
and $g\in H^2_0$, thus,
\[
G:= \frac{g}{I-{\al}} \in H^p_0
\]
for sufficiently small $p>0$, hence, for $0<p<1$.
Since
\[
(1-\overline{\al} I(\za))^{-1} F(\za) = \overline{G(\za)}\quad \textrm{for $\si_d$-a.e. } \za\in\spd,
\]
we have $G_\za\in H^p_0(\Dbb)$ and
%06
\begin{equation}\label{e_Gzeta}
(1-\overline{\al} I_\za)^{-1} F_\za = \overline{G_\za}\quad \textrm{a.e.\ on\ } \Tbb
\end{equation}
for $\si_d$-a.e.\ $\za\in\spd$.

So, we consider $\za\in\spd$ such that the above property also holds.
Recall that $I_\za$ is an inner function. Let $\clk_\al[I_\za]$ denote the corresponding Clark measure on $\Tbb$.
Since $F_\za\in (I_\za)_*(H^2(\Dbb))$, the
%07
Clark--Poltoratski theory in the unit disk (see \cite{Po93})
guarantees that
$F_\za \in L^2(\clk_\al[I_\za])$,
in the sense of boundary values, and the following representation holds:
%08
\begin{equation}\label{e_cau_Fzeta}
(1-\overline{\al} I_\za)^{-1} F_\za(z) = \int_{\Tbb} C(z, \xi) F_\za(\xi)\, d\clk_\al[I_\za](\xi), \quad z\in \Dbb.
\end{equation}
Let $\mu_\za = F_\za \clk_\al[I_\za]$.
The measure $\mu_\za$ is singular, so $(\mu_\za)_+ + (\mu_\za)_- =0$ a.e.\ on $\Tbb$.
%09
Thus $(\mu_\za)_- = - \overline{G_\za}$ a.e.\ on $\Tbb$ by \eqref{e_Gzeta} and \eqref{e_cau_Fzeta}.
Also, $(\mu_\za)_- \in \overline{H^p_0}$ and $- \overline{G_\za}\in \overline{H^p_0}$, $0<p<1$;
hence, $(\mu_\za)_- = - \overline{G_\za}$ on $\Dbb$.
Therefore,
\begin{equation}\label{e_slice_mu}
(1-\overline{\al} I_\za)^{-1} F_\za(z) - \overline{G_\za}(z)
%10
= (\mu_\za)_+(z) + (\mu_\za)_-(z)
= P[\mu_\za](z), \quad z \in\Dbb.
\end{equation}
So, for $\si_d$-a.e. $\za\in\spd$,
\[
\int_{\Tbb} \left| \frac{F(r\xi\za)}{1-\overline{\al}I(r\xi\za)} - \overline{G(r\xi\za)}\right|\, d\si_1(\xi)
\le \|\mu_\za\|
\]
for all $0<r<1$. Integrating the above estimate with respect to $\za\in\spd$, we obtain
\begin{equation}\label{e_int_muza}
\int_{\spd} \left| \frac{F(r\za)}{1-\overline{\al}I(r\za)} - \overline{G(r\za)}\right|\, d\si_d(\za)
\le \int_{\spd} \|\mu_\za\|\, d\si_d(\za)
\end{equation}
for all $0<r<1$.

Since $\|F_\za\|_{L^2(\clk_\al[I_\za])}= \|F_\za\|_{H^2}$,
we have
\[
\|\mu_\za\|
=\|F_\za\|_{L^1(\clk_\al[I_\za])} \le \|F_\za\|_{L^2(\clk_\al[I_\za])} \sqrt{\|\clk_\al[I_\za]\|}
\le \|F_\za\|_{H^2} \sqrt{\frac{1+|I(0)|}{1-|I(0)|}}.
\]
Thus,
\[
\int_{\spd} \|\mu_\za\|\, d\si_d(\za) \le \|F\|_{H^2} \sqrt{\frac{1+|I(0)|}{1-|I(0)|}} < \infty.
\]
Therefore, by \eqref{e_int_muza}, there exists a measure $\nu\in\plh(\spd)$ such that
\begin{equation}\label{e_nu}
P[\nu]= (1-\overline{\al}I)^{-1} F - \overline{G}.
\end{equation}
Clearly, $\nu$ is singular with respect to $\si_d$.

Now, using \eqref{e_ual_conj} and \eqref{e_nu}, observe that $\overline{f}\clk_\al -\overline{\nu} \in {A(\bd)}^\bot$, thus $\overline{f}\clk_\al -\overline{\nu}$
is a Henkin measure. Hence, by the Cole--Range theorem,
\begin{equation}\label{e_ac_rep}
{f}\clk_\al - {\nu} \ll \rho
\end{equation}
for some representing measure $\rho$. By Lemma~\ref{l_tot_sng}, $f\clk_\al$ is totally singular;
by Theorem~\ref{t_aif}, $\nu$ is also totally singular because $\nu$ is a singular pluriharmonic measure.
So, ${f}\clk_\al - {\nu}$ is a totally singular measure and \eqref{e_ac_rep} holds. Therefore,
$f\clk_\al = \nu \in \plh(\spd)$; in particular, $f\clk_\al$ is a pluriharmonic measure,
as required.
\end{proof}

\begin{rem}
In the above proof of the implication (i)$\Rightarrow$(ii) in Theorem~\ref{t_ksm_plh}, we apply Theorem~\ref{t_aif}
to conclude that $\nu$ is a totally singular measure.
Below we obtain the required property of the measure $\nu$ without reference to Theorem~\ref{t_aif}.

Indeed, the Clark theorem in the unit disk guarantees that the measure $\mu_\za$ in \eqref{e_slice_mu}
has the following property: $|\mu_\za|(\Tbb\setminus E_{\za, \al})=0$,
where
\[
E_{\za, \al} = \{\lambda\in\Tbb: \lim_{r\to 1-} I(r\lambda\za) =\al\}.
\]
The measure $\mu_\za$ is defined for $\si_d$-a.e.\ $\za\in\spd$.
In fact, \eqref{e_slice_mu} and \eqref{e_nu} allow to identify $\mu_\za$ and the slice measure $\nu_\za$ of $\nu$.
Recall that the slice measure $\nu_\xi$ is defined for $\wsd$-a.e.\ $\xi\in\spd$.
So, $\nu_\xi = \mu_\xi$ for $\wsd$-a.e.\ $\xi\in\spd$.
%%Now, combining Proposition~\ref{p_slices}, Theorem~\ref{t_mu_mod} and Remark~\ref{r_DM},
%%we apply \eqref{e_slice_sphsph_C_proj} with $f= \chi_{E_{\al}}$, where
%Now, applying Proposition~\ref{p_slices} to $\nu$, we use Theorem~\ref{t_DM} with $f= \chi_{E_{\al}}$, where
%\[
%E_{\al} = \{\za\in\spd: \lim_{r\to 1-} I(r\za) =\al\}.
%\]
%So, we conclude that $|\nu|(\spd\setminus E_{\al})=0$.
Since $\nu\in\plh(\spd) \subset \dcm(\spd)$, Theorem~\ref{t_mu_mod} guarantees that
$|\nu|$ decomposes in terms of $|\nu_\xi|$.
Now, we apply Theorem~\ref{t_DM} to $|\nu|$ and $f=\chi_{E_{\al}}$, where
\[
E_{\al} = \{\za\in\spd: \lim_{r\to 1-} I(r\za) =\al\}.
\]
Since
$|\nu_\xi|(\spd\setminus E_{\al})=0$ for $\wsd$-a.e.\ $\xi\in\spd$,
we conclude that $|\nu|(\spd\setminus E_{\al})=0$.
By \cite[Theorem~9.3.2]{Ru80}, $E_\al$ is a totally null set;
hence, $\nu$ is a totally singular measure, as required.
\end{rem}

%Remark. Boundary convergence of $f\in \ksm$.

\section{Essential norms of composition operators}\label{s_clk_cmp}

In this section, we assume that $\ph: \bd\to\Dbb$, $d\ge 1$, is an arbitrary holomorphic function.
It is well-known that the composition operator $\cph: f\mapsto f\circ\ph$ sends $H^2(\Dbb)$
into $H^2(\bd)$. Indeed, let $f\in H^2(\Dbb)$. Then $|f|^2 \le h$ for an appropriate harmonic
function $h$ on $\Dbb$. So, $|f\circ \ph|^2 \le h\circ\ph$, hence $f\circ\ph \in H^2(\bd)$.

Two-sided estimates for the essential norm of the operator $\cph: H^2(\Dbb) \to H^2(\bd)$, $d\ge 1$,
were obtained by B.R.~Choe \cite{Ch92} in terms of the corresponding pull-back measure.
To prove Theorem~\ref{t_cmp_H2K_clk},
we use a more explicit approach proposed in \cite{ShJoel87} for $d=1$.

\subsection{Composition operators and Nevanlinna counting functions}\label{ss_cmp_neva}
Given an $f\in H^2(\Dbb)$, the
Littlewood--Paley identity states that
\begin{equation}\label{e_LP}
\|f\|^2_{H^2(\Dbb)} = |f(0)|^2 + 2\int_{\Dbb} |f^\prime(w)|^2 \log\frac{1}{|w|}\, d\aream(w),
\end{equation}
where $\aream$ denotes the normalized area measure on $\Dbb$.
To study the composition operator generated by a holomorphic self-map $\phi$ of the unit disk,
J.~H.~Shapiro \cite{ShJoel87} used an analogous formula for $f\circ\phi$
based on the Nevanlinna counting function $\neva_\phi$ defined as
\[
\neva_{\phi}(w) = \sum_{z\in \Dbb:\, \phi(z)=w} \log\frac{1}{|z|}, \quad w\in\Dbb\setminus\{\phi(0)\},
\]
where each pre-image is counted according to its multiplicity.

\begin{lemma}\label{l_Stanton}
Let $\ph: \bd\to\Dbb$, $d\ge 1$, be a holomorphic function.
Then
\begin{equation}\label{e_Stntn_d}
\|f\circ \ph\|^2_{H^2(\bd)} = |f(\ph(0))|^2 + 2\int_{\Dbb} |f^\prime(w)|^2
\left(\int_{\spd} \neva_{\ph_\za}(w)\,  d\si_d(\za) \right)\, d\aream(w).
\end{equation}
\end{lemma}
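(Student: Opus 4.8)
The plan is to reduce the $d$-dimensional identity \eqref{e_Stntn_d} to the classical one-dimensional Shapiro/Stanton formula by slicing. First I would recall the one-variable Stanton formula: for a holomorphic self-map $\phi$ of $\Dbb$ and $f\in H^2(\Dbb)$,
\[
\|f\circ\phi\|^2_{H^2(\Dbb)} = |f(\phi(0))|^2 + 2\int_{\Dbb} |f'(w)|^2 \neva_{\phi}(w)\, d\aream(w);
\]
this follows from the Littlewood--Paley identity \eqref{e_LP} applied to $f\circ\phi$ together with the nonunivalent change of variables $w = \phi(z)$, which converts $\int_{\Dbb}|(f\circ\phi)'|^2\log(1/|z|)\,d\aream(z) = \int_{\Dbb}|f'(\phi(z))|^2|\phi'(z)|^2\log(1/|z|)\,d\aream(z)$ into $\int_{\Dbb}|f'(w)|^2\neva_\phi(w)\,d\aream(w)$. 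I will quote this as known (it is exactly Shapiro's argument in \cite{ShJoel87}).

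Next, I would pass to slices. For $\za\in\spd$ write $\ph_\za(\lambda)=\ph(\lambda\za)$, $\lambda\in\Dbb$; this is a holomorphic self-map of $\Dbb$ with $\ph_\za(0)=\ph(0)$, and $(f\circ\ph)_\za = f\circ\ph_\za$. The defining formula for the $H^2(\bd)$ norm, rewritten via the slice integration formula \eqref{e_Leb_dcm} (equivalently, by integrating first over circles $\Tbb\za$ and then over $\ppd$, or directly over $\spd$ using that $\za\mapsto\|g_\za\|_{H^2(\Dbb)}^2$ integrates to $\|g\|_{H^2(\bd)}^2$ for $g\in H^2(\bd)$), gives
\[
\|f\circ\ph\|^2_{H^2(\bd)} = \int_{\spd} \|f\circ\ph_\za\|^2_{H^2(\Dbb)}\, d\si_d(\za).
\]
Now apply the one-variable Stanton formula to each slice $\ph_\za$:
\[
\|f\circ\ph_\za\|^2_{H^2(\Dbb)} = |f(\ph(0))|^2 + 2\int_{\Dbb} |f'(w)|^2 \neva_{\ph_\za}(w)\, d\aream(w).
\]
Integrating this over $\za\in\spd$ and using Tonelli's theorem to interchange the $\si_d$-integral with the area integral (legitimate since all integrands are nonnegative and measurable in $(\za,w)$) yields exactly \eqref{e_Stntn_d}, the constant term $|f(\ph(0))|^2$ being unaffected since $\si_d$ is a probability measure.

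\textbf{Main obstacle.} The routine calculus is harmless; the step needing care is measurability and the use of Tonelli. One must check that $(\za,w)\mapsto \neva_{\ph_\za}(w)$ is jointly measurable so that $\int_{\spd}\int_{\Dbb}$ may be swapped, and that the slice map $\za\mapsto\ph_\za$ behaves well for $\si_d$-a.e.\ $\za$ (in particular $\ph_\za$ is a genuine self-map of $\Dbb$, nonconstant when $\ph$ is nonconstant, so $\neva_{\ph_\za}$ is defined a.e.\ on $\Dbb$; the constant case is trivial). Joint measurability follows from writing $\neva_{\ph_\za}(w)$ as an increasing limit of the truncated/regularized Nevanlinna functions, or from the area-integral representation $\int_{\Dbb}|f'(\ph_\za(\lambda))|^2|\ph_\za'(\lambda)|^2\log\frac1{|\lambda|}\,d\aream(\lambda)$, which is manifestly measurable in $\za$; one then invokes the uniqueness part of the counting-function identity. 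Once this bookkeeping is in place, the identity \eqref{e_Stntn_d} drops out immediately.
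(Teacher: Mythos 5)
Your proof is correct and follows essentially the same route as the paper: the authors likewise quote the one-variable Stanton formula from \cite{ShJoel87}, apply it to each slice $\ph_\za$, and integrate over $\za\in\spd$ against $\si_d$. The measurability/Tonelli bookkeeping you flag is real but routine, and the paper passes over it in silence.
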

\begin{proof}
Let $\phi$ be a holomorphic self-map of $\Dbb$. Then
\begin{equation}\label{e_Stntn}
\|f\circ \phi\|^2_{H^2(\Dbb)} = |f(\phi(0))|^2 + 2\int_{\Dbb} |f^\prime(w)|^2 \neva_{\phi}(w)\, d\aream(w)
\end{equation}
by Stanton's formula; see \cite{ShJoel87}.

Applying \eqref{e_Stntn} to $\phi = \ph_\za$, $\za\in \spd$,
and integrating with respect to the normalized Lebesgue measure $\si_d$
on $\spd$, we obtain \eqref{e_Stntn_d}.
\end{proof}

Comparison of \eqref{e_Stntn_d} and \eqref{e_LP} suggests that $\cph: H^2(\Dbb)\to H^2(\bd)$ is a compact operator
if $\bneva(\ph)=0$, where
\[
  \bneva(\ph) = \limsup_{|w|\to 1-} \int_{\spd}\frac{\neva_{\ph_\za}(w)}{1-|w|}\,  d\si_d(\za).
\]
A formal verification of this implication is given in the proof of Lemma~\ref{l_ShJoel} below.

\subsection{Clark measures on the unit sphere and essential norms of composition operators}
Given a holomorphic function $\ph: \bd\to \Dbb$, $d\ge 1$,
let $\|\cph\|_{H^2, e}$ denote the essential norm of the composition
operator $\cph: H^2(\Dbb)\to H^2(\bd)$.

\begin{lemma}\label{l_ShJoel}
Let $\ph: \bd\to\Dbb$, $d\ge 1$, be a holomorphic function.
Then
\[
\bneva (\ph) \ge \|\cph\|_{H^2, e}^2.
\]
\end{lemma}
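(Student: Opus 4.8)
The plan is to bound the essential norm from above by approximating $\cph$ with finite‑rank operators and controlling the residual through Stanton's formula (Lemma~\ref{l_Stanton}). Write $\widetilde\neva(w) = \int_{\spd} \neva_{\ph_\za}(w)\, d\si_d(\za)$, so that Lemma~\ref{l_Stanton} reads $\|\cph f\|_{H^2(\bd)}^2 = |f(\ph(0))|^2 + 2\int_{\Dbb} |f'(w)|^2 \widetilde\neva(w)\, d\aream(w)$, while by definition $\bneva(\ph) = \limsup_{|w|\to 1-} \widetilde\neva(w)/(1-|w|)$. Let $P_n$ denote the orthogonal projection of $H^2(\Dbb)$ onto the polynomials of degree less than $n$. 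Then $\cph P_n$ has finite rank, so $\|\cph\|_{H^2,e} \le \|\cph(I - P_n)\|$ for every $n$; moreover the norms $\|\cph(I-P_n)\|$ are non‑increasing. Choosing unit vectors $f_n \in (I-P_n)H^2(\Dbb)$ with $\|\cph f_n\| \ge \|\cph(I-P_n)\| - 1/n$, I obtain a sequence with $f_n \rightharpoonup 0$ weakly and $\|\cph\|_{H^2,e} \le \limsup_n \|\cph f_n\|$, so the problem reduces to showing $\limsup_n \|\cph f_n\|^2 \le \bneva(\ph)$.

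First I would record two auxiliary facts. Applying Lemma~\ref{l_Stanton} to $f(w) = w$ and using $\|\ph\|_{H^2(\bd)} \le 1$ gives $\int_{\Dbb} \widetilde\neva\, d\aream < \infty$, so $\widetilde\neva \in L^1(\aream)$. Second, since $f_n \rightharpoonup 0$ in $H^2(\Dbb)$ and point evaluations together with their derivatives are bounded functionals, $\{f_n\}$ is a normal family converging to $0$ locally uniformly; in particular $f_n(\ph(0)) \to 0$ and $f_n' \to 0$ uniformly on every compact subset of $\Dbb$.

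Next comes the boundary estimate. Fix $\er > 0$. By the definition of $\bneva(\ph)$ together with $\lim_{|w|\to 1-}(1-|w|)/\log\frac{1}{|w|} = 1$, there is $r_0 \in (0,1)$ with $\widetilde\neva(w) \le (\bneva(\ph) + \er)\log\frac{1}{|w|}$ for $r_0 < |w| < 1$. Splitting the integral in Stanton's formula at $|w| = r_0$, the outer part is controlled by the Littlewood--Paley identity~\eqref{e_LP}:
\[
2\int_{r_0 < |w| < 1} |f_n'(w)|^2 \widetilde\neva(w)\, d\aream(w) \le (\bneva(\ph) + \er)\,\bigl(\|f_n\|_{H^2(\Dbb)}^2 - |f_n(0)|^2\bigr) \le \bneva(\ph) + \er,
\]
while the inner part satisfies $2\int_{|w| \le r_0} |f_n'|^2 \widetilde\neva\, d\aream \le 2\bigl(\sup_{|w|\le r_0} |f_n'|^2\bigr) \int_{\Dbb} \widetilde\neva\, d\aream \to 0$ as $n \to \infty$, by the two facts above, and $|f_n(\ph(0))|^2 \to 0$ as well. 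Hence $\limsup_n \|\cph f_n\|^2 \le \bneva(\ph) + \er$, and letting $\er \to 0$ yields $\|\cph\|_{H^2,e}^2 \le \bneva(\ph)$.

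The main obstacle is the interchange between the geometric weight $1-|w|$ in the definition of $\bneva(\ph)$ and the logarithmic weight $\log\frac{1}{|w|}$ appearing in the Littlewood--Paley and Stanton identities; this is harmless because the two are asymptotically equal at the boundary, but it must be arranged uniformly on a full annulus $r_0 < |w| < 1$. The other delicate point is that the inner contribution over $\{|w|\le r_0\}$ must vanish in the limit, which relies precisely on $\widetilde\neva \in L^1(\aream)$ (a consequence of the boundedness of $\cph$) combined with the locally uniform decay of $f_n'$; without integrability of $\widetilde\neva$ one could not discard the inner term, and it is this step that forces the preliminary computation with $f(w)=w$.
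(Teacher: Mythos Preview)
Your argument is correct and follows essentially the same scheme as the paper's proof: approximate $\cph$ by compact operators, apply Stanton's formula to the remainder, split the integral at a fixed radius, control the outer annulus by the Littlewood--Paley identity, and kill the inner disk term using integrability of $\widetilde\neva$ together with smallness of the test functions on compacta. The only cosmetic difference is that the paper uses the dilation operators $T_r f(z)=f(rz)$ as compact approximants and bounds $|(f-f_r)'|$ on $R\Dbb$ by an explicit power-series estimate uniform over the unit ball of $H^2(\Dbb)$, whereas you use the Taylor projections $P_n$ and a normal-family argument along a near-extremal sequence $f_n$; the two implementations are interchangeable.
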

\begin{proof}
For $f\in H^2(\Dbb)$ and $0<r<1$, put $T_r f (z) =  f(rz)$, $z\in \Dbb$.
Clearly, $T_r$ is a compact operator on $H^2(\Dbb)$.

Let $f(z) = \sum_{k=0}^\infty a_k z^k$, $z\in \Dbb$, be in the unit ball of $H^2(\Dbb)$.
In particular, $|a_k|\le 1$.
By \eqref{e_Stntn_d},
\begin{align*}
  \|(\cph - \cph T_r) f\|_{H^2}^2
 & = |(f - f_r)(\ph(0))|^2 \\
 &\quad + 2 \int_{\Dbb} |(f-f_r)^\prime(w)|^2  \int_{\spd} \neva_{\ph_\za}(w)\, d\si_d(\za)\, d\aream(w).
\end{align*}
Firstly,
\[
|(f - f_r)(\ph(0))| \le \sum_{k=1}^\infty (1-r^k) |\ph(0)|^{k}\to 0 \quad\textrm{as}\ r\to 1-.
\]
Secondly, for $0<R<1$,
\begin{align*}
  &\int_{R\Dbb} |(f-f_r)^\prime(w)|^2  \int_{\spd} \neva_{\ph_\za}(w)\, d\si_d(\za)\, d\aream(w) \\
  &\quad\le \int_{\Dbb} \int_{\spd} \neva_{\ph_\za}(w)\, d\si_d(\za)\, d\aream(w) \left( \sum_{k=1}^\infty k(1-r^k) R^{k-1} \right)^2
\to 0 \quad\textrm{as}\ r\to 1-.
\end{align*}

Now, fix an $R\in (0,1)$ and put
\[
\bneva_R = \sup_{R\le |w| <1} \int_{\spd} \frac{\neva_{\ph_\za}(w)}{-\log |w|}\, d\si_d(\za).
\]
Let $\er>0$. The above estimates do not depend on $f$, $\|f\|_{H^2}\le 1$,
so, taking $r=r(\er, R)$ sufficiently close to $1$, we obtain
\begin{align*}
\|(\cph - \cph T_r) f\|_{H^2}^2
  &\le \er + 2\int_{\Dbb \setminus R\Dbb} |(f-f_r)^\prime(w)|^2 \bneva_R \log\frac{1}{|w|}\, d\aream(w) \\
  &\le \er + 2\bneva_R \int_{\Dbb} |(f-f_r)^\prime(w)|^2 \log\frac{1}{|w|}\, d\aream(w) \\
  &\le \er+ \bneva_R \|f-f_r\|_{H^2}^2 \\
  &\le \er+ \bneva_R
\end{align*}
by \eqref{e_LP}. Hence, $\|\cph\|_{H^2, e}^2 \le \bneva_R$, $0<R<1$. Since $-\log |w| \sim 1-|w|$ as $|w|\to 1-$,
the proof is finished.
\end{proof}

For $d=1$, the following lemma is proved in \cite{CM97}.

\begin{lemma}\label{l_ShJnn}
For $b\in \Dbb$, put
\[
f_b(w) = \frac{\sqrt{1-|b|^2}}{1 - \overline{b}w}, \quad w\in \Dbb.
\]
Let $\ph: \bd\to \Dbb$, $d\ge 1$, be a holomorphic function and let $\al\in\Tbb$. Then
\[
\|f_b \circ \ph\|^2_{H^2(\bd)} \to \|\clk_\al^s\| \quad \textrm{as}\ b\to\al\ \textrm{radially},
\]
where $\clk_\al^s$ denotes the singular part of the Clark measure $\clk_{\al} [\ph]$.
\end{lemma}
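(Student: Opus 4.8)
The plan is to reduce the $d$-dimensional statement to the one-dimensional case via slice integration, just as in Lemma~\ref{l_Stanton}. First I would record the elementary fact that for the one-dimensional functions $f_b$ one has $\|f_b\|_{H^2(\Dbb)}^2 = 1$ and, more to the point, a clean formula for $f_b' $: namely $f_b'(w) = \overline{b}\sqrt{1-|b|^2}/(1-\overline{b}w)^2$, so that $|f_b'(w)|^2 = |b|^2(1-|b|^2)/|1-\overline{b}w|^4$. Plugging this into Stanton's formula \eqref{e_Stntn_d} gives
\[
\|f_b\circ\ph\|_{H^2(\bd)}^2 = |f_b(\ph(0))|^2 + 2|b|^2(1-|b|^2)\int_{\Dbb} \frac{1}{|1-\overline{b}w|^4}\left(\int_{\spd}\neva_{\ph_\za}(w)\,d\si_d(\za)\right)d\aream(w).
\]
By Fubini this equals $|f_b(\ph(0))|^2 + \int_{\spd}\bigl(\|f_b\circ\ph_\za\|_{H^2(\Dbb)}^2 - |f_b(\ph_\za(0))|^2\bigr)\,d\si_d(\za)$, and since $\ph_\za(0)=\ph(0)$ for every $\za\in\spd$ the point-evaluation terms cancel, leaving
\[
\|f_b\circ\ph\|_{H^2(\bd)}^2 = \int_{\spd}\|f_b\circ\ph_\za\|_{H^2(\Dbb)}^2\,d\si_d(\za).
\]

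Next I would invoke the one-dimensional result (this is exactly the content of \cite{CM97}): for each fixed $\za\in\spd$, as $b\to\al$ radially, $\|f_b\circ\ph_\za\|_{H^2(\Dbb)}^2 \to \|\clk_\al[\ph_\za]^s\|$, where $\clk_\al[\ph_\za]$ is the one-dimensional Clark measure of the slice function $\ph_\za$. The goal is then to pass the radial limit $b\to\al$ inside the integral over $\spd$ and to identify $\int_{\spd}\|\clk_\al[\ph_\za]^s\|\,d\si_d(\za)$ with $\|\clk_\al^s\|$. For the latter identification I would use Proposition~\ref{p_slices} together with Proposition~\ref{p_DM_sgl}: the slice measures of the pluriharmonic measure $\clk_\al=\clk_\al[\ph]$ are exactly the one-dimensional Clark measures $\clk_\al[\ph_\za]$ (both have Poisson integral $\Rl\frac{\al+\ph(z\za)}{\al-\ph(z\za)}$ on the slice $\Dbb\za$, hence coincide by uniqueness), and by Proposition~\ref{p_DM_sgl} the singular part $\clk_\al^s$ decomposes weakly as $\int_{\ppd}\clk_\al[\ph_\za]^s\,d\wsd(\za)$; evaluating this weak decomposition on the constant function $1$ (legitimate by Theorem~\ref{t_DM}) gives $\|\clk_\al^s\| = \int_{\spd}\|\clk_\al[\ph_\za]^s\|\,d\si_d(\za)$, using also that the $\ppd$-integral against $\wsd$ unrolls to an $\spd$-integral against $\si_d$ since the integrand is constant on each circle $\pi^{-1}(\za)$.

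For the interchange of limit and integral I would seek a uniform-in-$b$ integrable majorant for $g_b(\za) := \|f_b\circ\ph_\za\|_{H^2(\Dbb)}^2$. The one-dimensional identity $\|f_b\circ\psi\|_{H^2(\Dbb)}^2 = \int_{\Tbb} P(\psi(0)\text{-type}) \dots$ — more precisely $\|f_b\circ\psi\|_{H^2(\Dbb)}^2 = \int_{\Tbb} |f_b(\psi(\xi))|^2\,d\si_1(\xi) = \int_\Tbb \frac{1-|b|^2}{|1-\overline{b}\psi(\xi)|^2}\,d\si_1(\xi)$ for a self-map $\psi$ of $\Dbb$ (valid because $f_b\circ\psi\in H^2$ with boundary values $f_b\circ\psi$ a.e.) shows $g_b(\za)\le \int_\Tbb \frac{1-|b|^2}{|1-\overline{b}\psi(\xi)|^2}\,d\si_1(\xi)$; this is the one-dimensional Poisson-type quantity, and by the sub-mean-value / Harnack-type bound for $\clk$ one gets $g_b(\za) \le P[\clk_{\al}[\ph_\za]](\,b^*\,)$ or simply a bound of the form $g_b(\za)\le C\,\|\clk_\al[\ph_\za]\| \le C\,\frac{1+|\ph(0)|}{1-|\ph(0)|}$ uniformly once $b$ is in a fixed radial neighbourhood of $\al$ — a constant independent of $\za$, hence trivially $\si_d$-integrable. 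The main obstacle is precisely pinning down this uniform majorant cleanly: one must be careful that the bound on $g_b(\za)$ is uniform in $b$ near $\al$ and not merely pointwise, so I would likely route it through the explicit inequality $\|f_b\circ\ph_\za\|_{H^2(\Dbb)}^2\le \frac{1-|b|^2}{1-|b|}\cdot\frac{1}{1-|\ph_\za(0)|}\cdot(\text{const})$, or equivalently note that $g_b(\za)$ is the value at an interior point of the positive harmonic function $P[\clk_\al[\ph_\za]]$ composed appropriately, and then apply the dominated convergence theorem to conclude $\|f_b\circ\ph\|_{H^2(\bd)}^2 \to \int_{\spd}\|\clk_\al[\ph_\za]^s\|\,d\si_d(\za) = \|\clk_\al^s\|$, as required.
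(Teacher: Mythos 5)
Your argument is correct, but it takes a genuinely different route from the paper's. You reduce to $d=1$ by slicing, via $\|f_b\circ\ph\|^2_{H^2(\bd)}=\int_{\spd}\|f_b\circ\ph_\za\|^2_{H^2(\Dbb)}\,d\si_d(\za)$, quote the Cima--Matheson limit \cite{CM97} for each slice, identify $\int_{\spd}\|\clk_\al[\ph_\za]^s\|\,d\si_d(\za)$ with $\|\clk_\al^s\|$ through Propositions~\ref{p_slices} and \ref{p_DM_sgl}, and interchange limit and integral by dominated convergence. All of this goes through, and the uniform majorant you were worried about is in fact immediate: for $b=r\al$,
\[
\|f_b\circ\ph_\za\|^2_{H^2(\Dbb)}
\le\int_{\Tbb}\frac{1-|r\ph_\za(\xi)|^2}{|\al-r\ph_\za(\xi)|^2}\,d\si_1(\xi)
=\frac{1-r^2|\ph(0)|^2}{|\al-r\ph(0)|^2}
\le\frac{1}{(1-|\ph(0)|)^2},
\]
where the middle equality is the mean value property of the bounded harmonic function $\Rl\frac{\al+r\ph_\za}{\al-r\ph_\za}$; this bound is independent of $\za$ and of $r$. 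The paper instead argues directly in $\bd$, with no slicing and no appeal to \cite{CM97} or to the decomposition theory of Section~\ref{s_aux_plh}: it writes $\|f_{r\al}\circ\ph\|^2_{H^2(\bd)}=\int_{\spd}\frac{1-r^2}{|\al-r\ph(\za)|^2}\,d\si_d(\za)=J_r-K_r$ with $J_r=\int_{\spd}\frac{1-|r\ph|^2}{|\al-r\ph|^2}\,d\si_d=\frac{1-r^2|\ph(0)|^2}{|\al-r\ph(0)|^2}\to\|\clk_\al\|$ (the same mean value property, now used on the whole ball) and $K_r=r^2\int_{\spd}\frac{1-|\ph|^2}{|\al-r\ph|^2}\,d\si_d\to\|\clk_\al^a\|$ by monotone convergence, so the limit is $\|\clk_\al\|-\|\clk_\al^a\|=\|\clk_\al^s\|$. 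The paper's computation is shorter and self-contained; your route costs the importation of the one-dimensional theorem and the slice-decomposition machinery, but in exchange it makes explicit the structural fact that the singular mass of the $d$-dimensional Clark measure is the average over $\spd$ of the singular masses of the slice Clark measures.
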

\begin{proof}
Let $b=r\al$, $0<r<1$, $\al\in\Tbb$. We have
\begin{align*}
\|f_b \circ \ph\|^2_{H^2(\bd)}
    &=\int_{\spd} \frac{1-r^2}{|\al- r\ph(\za)|^2}\, d\si_d(\za)\\
    &=\int_{\spd} \frac{1-|r\ph(\za)|^2}{|\al- r\ph(\za)|^2}\, d\si_d(\za)
     - r^2\int_{\spd} \frac{1-|\ph(\za)|^2}{|\al- r\ph(\za)|^2}\, d\si_d(\za)\\
    &= J_r - K_r.
\end{align*}
For any $w\in\Dbb$, the function
${r^2}{|1-rw|^{-2}}$
monotonically increases as $r\to 1-$. Thus,
\[
\lim_{r\to 1-} K_r = \int_{\spd} \frac{1-|\ph(\za)|^2}{|\al - \ph(\za)|^2}\, d\si_d(\za) = \|\clk_\al^a\|.
\]
Since
\[
J_r = \frac{1-|r\ph(0)|^2}{|\al - r\ph(0)|^2} \to \|\clk_\al\|\quad \textrm{as}\ r\to 1-,
\]
the proof is finished.
\end{proof}

Now, we are ready to prove the following extension of Theorem~\ref{t_cmp_H2K_clk}.

\begin{theorem}\label{t_cph_tri}
Let $\ph: \bd\to\Dbb$, $d\ge 1$, be a holomorphic function.
Then
\[
\|\cph\|_{H^2, e}^2 = \bclk (\ph) = \bneva (\ph),
\]
where
\[
  \bclk(\ph)  = \sup_{\al\in\Tbb} \|\clk_\al^s\|.
\]
\end{theorem}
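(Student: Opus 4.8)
The plan is to establish the two equalities $\|\cph\|_{H^2,e}^2 = \bneva(\ph)$ and $\bneva(\ph) = \bclk(\ph)$ by a chain of inequalities that closes up into a cycle. Lemma~\ref{l_ShJoel} already gives $\bneva(\ph) \ge \|\cph\|_{H^2,e}^2$, so three more inequalities suffice:
\[
\|\cph\|_{H^2,e}^2 \ge \bclk(\ph), \qquad \bclk(\ph) \ge \bneva(\ph).
\]
The first of these is the easy direction of the essential-norm estimate: for $b = r\al \to \al$ radially, the functions $f_b$ of Lemma~\ref{l_ShJnn} form a bounded family in $H^2(\Dbb)$ converging to $0$ uniformly on compact subsets of $\Dbb$, hence $f_b \to 0$ weakly. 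Thus for any compact operator $K: H^2(\Dbb) \to H^2(\bd)$ we have $\|Kf_b\|_{H^2(\bd)} \to 0$, while $\|f_b\|_{H^2(\Dbb)} = 1$, so $\|\cph - K\| \ge \limsup_{r\to 1-}\|f_b\circ\ph\|_{H^2(\bd)} = \sqrt{\|\clk_\al^s\|}$ by Lemma~\ref{l_ShJnn}. Taking the infimum over $K$ and the supremum over $\al\in\Tbb$ gives $\|\cph\|_{H^2,e}^2 \ge \bclk(\ph)$.

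\textbf{The main step: $\bclk(\ph) \ge \bneva(\ph)$.} This is where the Clark measures really enter, and I expect it to be the crux. The idea is to relate the inner integral $\int_{\spd}\neva_{\ph_\za}(w)\,d\si_d(\za)$ appearing in Stanton's formula (Lemma~\ref{l_Stanton}) to the norms $\|\clk_\al^s\|$. In the one-variable case (Cima--Matheson, \cite{CM97}), one uses the classical identity expressing $\|\clk_\al^s\|$ for $\clk_\al[\phi]$, $\phi$ a self-map of $\Dbb$, as a limit of Nevanlinna-type quantities via the Shapiro formula; concretely, $\|f_{r\al}\circ\phi\|_{H^2(\Dbb)}^2 \to \|\clk_\al^s\|$ and, on the other hand, substituting $f = f_{r\al}$ into Stanton's formula \eqref{e_Stntn} and letting $r\to 1-$ produces an integral of $\neva_\phi(w)$ against a Poisson-type kernel concentrating at $\al$. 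Carrying this out slice-by-slice: for $\si_d$-a.e.\ $\za\in\spd$, $\ph_\za$ is a self-map of $\Dbb$ with Clark measures $\clk_\al[\ph_\za]$ on $\Tbb$; moreover one checks, via Poisson integrals, that the slice measure $(\clk_\al[\ph])_\za$ equals $\clk_\al[\ph_\za]$, so by Proposition~\ref{p_DM_sgl} and Theorem~\ref{t_mu_mod}, $\|\clk_\al^s[\ph]\| = \int_{\ppd}\|\clk_\al^s[\ph_\za]\|\,d\wsd(\za)$. Combining this with the one-variable relation between $\|\clk_\al^s[\ph_\za]\|$ and $\neva_{\ph_\za}$, and then integrating the inequality $\int_{\spd}\neva_{\ph_\za}(w)\,d\si_d(\za) \lesssim (1-|w|)\,\bclk(\ph)$ (the slice-wise estimate $\neva_{\ph_\za}(w) \le $ something controlled by a radial maximal function of $\clk_\al[\ph_\za]$, uniformly in $\al$ near the relevant boundary point), one obtains the bound $\bneva(\ph) \le \bclk(\ph)$. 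The delicate point is making the passage from the pointwise (in $w$) Nevanlinna estimate to the supremum over $\al$ uniform, and justifying the slice decomposition of the singular part of $\clk_\al[\ph]$ — this is exactly what Section~\ref{s_aux_clk} and the decomposition machinery of Section~\ref{s_aux_plh} were built for.

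\textbf{Closing the argument.} Once the three inequalities
\[
\|\cph\|_{H^2,e}^2 \le \bneva(\ph) \le \bclk(\ph) \le \|\cph\|_{H^2,e}^2
\]
are in hand (the first from Lemma~\ref{l_ShJoel}, the second from the slice argument above, the third from the weak-convergence argument with $f_b$), all quantities coincide and the theorem follows; Theorem~\ref{t_cmp_H2K_clk} is then the statement $\|\cph\|_{H^2,e} = \sqrt{\bclk(\ph)}$, which is the special case recorded separately. I expect the bookkeeping in the slice argument — tracking that $\neva_{\ph_\za}$ is integrable in $\za$, that the one-variable Cima--Matheson identity applies for $\si_d$-a.e.\ slice, and that the resulting bounds are uniform in $\al$ — to be the main obstacle; the two "soft" inequalities are routine functional analysis given Lemmas~\ref{l_ShJoel} and \ref{l_ShJnn}.
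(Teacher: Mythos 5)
Your global architecture coincides with the paper's: Lemma~\ref{l_ShJoel} supplies $\bneva(\ph)\ge\|\cph\|_{H^2,e}^2$, and the two remaining inequalities $\|\cph\|_{H^2,e}^2\ge\bclk(\ph)$ and $\bclk(\ph)\ge\bneva(\ph)$ close the cycle. Your argument for $\|\cph\|_{H^2,e}^2\ge\bclk(\ph)$ (weak null convergence of the normalized reproducing kernels $f_b$, plus Lemma~\ref{l_ShJnn}) is exactly the paper's step~(i) and is complete.

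For the crux $\bclk(\ph)\ge\bneva(\ph)$ you propose a genuinely different route: slice the Clark measure, identify $(\clk_\al[\ph])_\za$ with $\clk_\al[\ph_\za]$, decompose $\|\clk_\al^s[\ph]\|=\int_{\ppd}\|\clk_\al^s[\ph_\za]\|\,d\wsd(\za)$ via Theorem~\ref{t_mu_mod} and Proposition~\ref{p_DM_sgl}, and apply the one-variable Cima--Matheson bound slicewise. The slice identification and the decomposition of the singular part are correct (though the paper obtains $\|\clk_\al^s\|$ more cheaply from the explicit formula \eqref{e_sclk_norm} and never invokes the machinery of Section~\ref{s_aux_plh} here). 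The genuine gap is the step you yourself flag as ``the delicate point'': the interchange
\[
\limsup_{w\to\al}\int_{\spd}\frac{\neva_{\ph_\za}(w)}{1-|w|}\,d\si_d(\za)
\le\int_{\spd}\limsup_{w\to\al}\frac{\neva_{\ph_\za}(w)}{1-|w|}\,d\si_d(\za).
\]
You name the obstacle but provide no mechanism for it, and without one the argument does not close. The paper's device is the Nieminen--Saksman majorant
\[
\nevva_\phi(w)=\int_{\Tbb}\log|\psi_w\circ\phi(\xi)|\,d\si_1(\xi)+\log\frac{1}{|\psi_w(\phi(0))|}\ \ge\ \neva_\phi(w),
\]
where $\psi_w(\lad)=(w-\lad)/(1-\overline{w}\lad)$; the inequality is Jensen's formula plus Fatou, and after this substitution the only limit interchange needed is Fatou's lemma for the nonnegative kernel $-\log|\psi_w(\lad)|/(1-|w|)\to(1-|\lad|^2)/|\al-\lad|^2$, whereupon \eqref{e_sclk_norm} produces $\|\clk_\al^s\|$ exactly. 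If you prefer to keep your slicewise formulation, the same majorant rescues it: since $\ph_\za(0)=\ph(0)$ for every $\za$, one has $\neva_{\ph_\za}(w)\le\nevva_{\ph_\za}(w)\le-\log|\psi_w(\ph(0))|$, so $\neva_{\ph_\za}(w)/(1-|w|)$ is bounded by a constant depending only on $|\ph(0)|$, uniformly in $\za$, once $|w|$ is close enough to $1$; the reverse Fatou lemma then applies with a constant dominant. Either way, this estimate has to be written down --- as it stands, your plan for the key inequality is a statement of intent rather than a proof.
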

\begin{proof}
By Lemma~\ref{l_ShJoel}, it suffices to obtain the following chain of inequalities:
\[
\|\cph\|_{H^2, e}^2
\overset{\textrm{(i)}}{\ge} \bclk \overset{\textrm{(ii)}}{\ge} \bneva.
\]

(i)
Let $f_b$, $b\in\Dbb$, be defined as in Lemma~\ref{l_ShJnn}.
Observe that $\|f_b\|_{H^2(\Dbb)} =1$ and $f_b \to 0$ weakly in $H^2(\Dbb)$ as $|b|\to 1-$.
So, given a compact operator $K: H^2(\Dbb) \to H^2(\bd)$, we have
$\|K f_b\|_{H^2(\bd)} \to 0$ as $|b|\to 1-$.
Hence,
\[
\|\cph -K\|^2_{H^2(\Dbb) \to H^2(\bd)}
\ge \limsup_{b\to \al} \|(\cph -K) f_b\|^2_{H^2(\bd)}
\ge \|\clk_\al^s\|
\]
by Lemma~\ref{l_ShJnn}.
Therefore, $\|\cph\|_{H^2, e}^2 \ge \bclk$, as required.

(ii) Let $\phi$ be a holomorphic self-map of $\Dbb$.
For $w\in \Dbb\setminus\{\phi(0)\}$ and $\al\in\Tbb$, put
\[
\nevva_\phi(w) = \int_{\Tbb} \log|\psi_w\circ\phi (\xi)|\, d\si_1(\xi)
+ \log\frac{1}{|\psi_w(\phi(0))|},
\]
where
\[
\psi_w(\lad) = \frac{w-\lad}{1- \overline{w}\lad}.
\]
As indicated in \cite[Sect~3.1]{NS04}, Jensen's formula and Fatou's lemma guarantee that
\[\neva_\phi (w)\le \nevva_\phi(w), \quad w\in\Dbb\setminus\{\phi(0)\}.
\]
Also, for $\lad \in \Dbb\cup\Tbb\setminus\{\al\}$, direct calculations show that
\[
-\lim_{w\to\al} \frac{\log |\psi_w (\lad)|}{1-|w|}
%= \lim_{w\to\al} \frac{1-|\psi_w(z)|}{1-|w|^2}
%= \lim_{w\to\al} \frac{1-|\psi_w(z)|}{1-|w|^2}
=\frac{1-|\lad|^2}{|\al- \lad|^2}.
\]
Therefore, applying the definition of $\nevva_{\ph_\za}$, $\za\in\spd$, and reverse Fatou's lemma, we obtain
\begin{align*}
  \limsup_{w\to\al}
  \int_{\spd} \frac{\neva_{\ph_\za}(w)}{1-|w|}\, d\si_d(\za)
  &\le \limsup_{w\to\al} \int_{\spd} \frac{\nevva_{\ph_\za}(w)}{1-|w|}\, d\si_d(\za)\\
  &\le \int_{\spd}
   \left(\frac{1-|\ph(0)|^2}{|\al- \ph(0)|^2} -
   \int_{\Tbb}\frac{1-|\ph_\za(\xi)|^2}{|\al- \ph_\za(\xi)|^2} \,d\si_1(\xi)\right)\, d\si_d(\za)  \\
  &=\frac{1-|\ph(0)|^2}{|\al- \ph(0)|^2} - \int_{\spd} \frac{1-|\ph(\za)|^2}{|\al- \ph(\za)|^2}\, d\si_d(\za) \\
  &=\|\clk_\al^s\|
\end{align*}
by \eqref{e_sclk_norm}.
%Proposition~\ref{c_slices_sgl}.
So, we conclude that $\bneva[\ph]\le \bclk[\ph]$, as required. Hence, the proof of Theorem~\ref{t_cph_tri} is finished.
\end{proof}

\begin{corollary}\label{c_cph_tri}
Let $0<p<\infty$ and let $\ph: \bd\to\Dbb$, $d\ge 1$, be a holomorphic function.
Then the following properties are equivalent:
\begin{itemize}
  \item $\bneva (\ph) = 0$;
  \item $\cph: H^p(\Dbb)\to H^p(\bd)$ is a compact operator;
  \item all Clark measures $\clk_\al[\ph]$, $\al\in\Tbb$, are absolutely continuous.
\end{itemize}
\end{corollary}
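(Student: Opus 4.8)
The plan is to derive Corollary~\ref{c_cph_tri} from Theorem~\ref{t_cph_tri} together with some elementary functional analysis and, for the case $p\neq 2$, a reduction to the $L^2$ situation. First, observe that the equivalence of the first and third bullets is immediate from the identity $\bneva(\ph)=\bclk(\ph)=\sup_{\al\in\Tbb}\|\clk_\al^s\|$ established in Theorem~\ref{t_cph_tri}: indeed $\bneva(\ph)=0$ if and only if $\clk_\al^s=0$ for every $\al\in\Tbb$, which by the definition of the singular part means precisely that each $\clk_\al[\ph]$ is absolutely continuous with respect to $\si_d$. So the only real content is the equivalence with compactness of $\cph$ on $H^p$.

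For $p=2$ the equivalence is also contained in Theorem~\ref{t_cph_tri}, since $\cph:H^2(\Dbb)\to H^2(\bd)$ is compact exactly when $\|\cph\|_{H^2,e}=0$, and $\|\cph\|_{H^2,e}^2=\bneva(\ph)$. For general $0<p<\infty$ the plan is to show that compactness of $\cph$ on $H^p$ is equivalent to compactness on $H^2$; once this is done, the corollary follows. The standard device here is the change-of-variables/interpolation trick for composition operators: if $\cph$ is compact on $H^p$ for one $p$, then it is compact on $H^q$ for all $0<q<\infty$. One clean way to see $H^p$-compactness $\Rightarrow$ $H^2$-compactness is via the pull-back measure $\mu_\ph=\si_d\circ\ph^{-1}$ on $\overline{\Dbb}$: $\cph:H^p(\Dbb)\to H^p(\bd)$ is bounded (resp.\ compact) iff $\mu_\ph$ is a Carleson (resp.\ vanishing Carleson) measure for $H^p(\Dbb)$, and the vanishing Carleson condition does not depend on $p$. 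This is precisely the kind of statement proved by Choe \cite{Ch92} (cited in the text right before Theorem~\ref{t_cph_tri}) and, in one variable, goes back to MacCluer--Shapiro; I would invoke it, noting that the vanishing-Carleson characterization of the measure $\mu_\ph$ is $p$-independent. Alternatively, and perhaps more in the spirit of the paper, one can run the Littlewood--Paley/Nevanlinna argument of Section~\ref{ss_cmp_neva} for $H^p$: the Stanton-type formula \eqref{e_Stntn_d} has an $H^p$-analogue, and the proof of Lemma~\ref{l_ShJoel} shows $\bneva(\ph)=0$ forces $\cph$ to be a limit of the compact operators $\cph T_r$ in the $H^p$ operator norm, hence compact; conversely, $\cph$ compact on $H^p$ forces $\|f_b\circ\ph\|_{H^p(\bd)}\to 0$ as $|b|\to1$ along each radius, which by an $H^p$-version of Lemma~\ref{l_ShJnn} forces $\|\clk_\al^s\|=0$ for all $\al$, i.e.\ $\bneva(\ph)=0$.

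Concretely, I would organize the proof as follows. Step 1: record that $\bneva(\ph)=0\iff$ every $\clk_\al[\ph]$ is absolutely continuous, directly from $\bneva(\ph)=\sup_\al\|\clk_\al^s\|$ (Theorem~\ref{t_cph_tri}). Step 2: prove $\bneva(\ph)=0\Rightarrow\cph$ compact on $H^p$ by repeating the argument of Lemma~\ref{l_ShJoel} with the $H^p$ Littlewood--Paley/Stanton identities, obtaining $\|\cph-\cph T_r\|_{H^p(\Dbb)\to H^p(\bd)}\to 0$. Step 3: prove $\cph$ compact on $H^p\Rightarrow\bneva(\ph)=0$ by testing on the normalized reproducing-kernel-type functions; one uses that suitably $H^p$-normalized analogues of $f_b$ (namely $f_b^{2/p}$, which have $\|f_b^{2/p}\|_{H^p(\Dbb)}$ bounded and tend to $0$ weakly/in the appropriate weak sense as $|b|\to1$) satisfy $\|f_b^{2/p}\circ\ph\|_{H^p(\bd)}^p=\|f_b\circ\ph\|_{H^2(\bd)}^2\to\|\clk_\al^s\|$ along the radius to $\al$, by Lemma~\ref{l_ShJnn}; compactness kills the left side, so $\|\clk_\al^s\|=0$ for every $\al$.

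The main obstacle is Step 3 for $p\neq 2$ — specifically, justifying that $f_b^{2/p}\to 0$ in whatever weak sense makes $\|K f_b^{2/p}\|_{H^p(\bd)}\to 0$ for every compact $K$. For $p\ge 1$ this is genuine weak convergence in the Banach space $H^p$; for $0<p<1$, $H^p$ is not locally convex and one must instead argue directly that $\cph T_r$ are compact and that $f_b^{2/p}\circ\ph$ is handled by the $r\to1$ estimate, i.e.\ re-run the Lemma~\ref{l_ShJoel} bookkeeping rather than quote abstract weak-compactness. A cleaner route that sidesteps this entirely is to cite Choe's result \cite{Ch92} that the (vanishing) Carleson condition on the pull-back measure $\mu_\ph$ characterizes (compactness of) $\cph$ on $H^p(\bd)$ uniformly in $p$, thereby reducing all $p$ to $p=2$, which is Theorem~\ref{t_cph_tri}. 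I would present the proof via this reduction, remarking that the $p$-independence of the vanishing Carleson condition is what makes the three conditions equivalent for all $0<p<\infty$ simultaneously.
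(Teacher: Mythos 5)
Your proposal is correct and follows essentially the same route as the paper: the case $p=2$ is read off from Theorem~\ref{t_cph_tri}, the equivalence with absolute continuity of all $\clk_\al[\ph]$ comes from $\bneva(\ph)=\sup_{\al\in\Tbb}\|\clk_\al^s\|$, and the general $p$ is handled by the known $p$-independence of compactness of $\cph:H^p(\Dbb)\to H^p(\bd)$ (the paper cites MacCluer \cite{McC85} for this, rather than Choe, but it is the same reduction). The alternative direct $H^p$ argument you sketch is not needed once that reduction is invoked.
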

\begin{proof}
If $p=2$, then Theorem~\ref{t_cph_tri} applies.
To finish the proof, it suffices to observe that
the operator $\cph: H^p(\Dbb)\to H^p(\bd)$ is compact for some $p>0$
if and only if it is compact for $p=2$; see, for example, \cite{McC85}.
\end{proof}

%\bibliographystyle{amsplain}
%\bibliography{plhclm}

\end{document}